\def\csname ver@etex.sty\endcsname{3000/12/31}
\setlist[enumerate]{leftmargin=.5in}
\setlist[itemize]{leftmargin=.5in}
\title{Asymptotics of the Sketched Pseudoinverse%
}
\author{Daniel LeJeune{\thanks{Equal contribution.}}\phantom{a}\thanks{Department of Statistics, Stanford University, Stanford, CA 94305 (\email{daniel@dlej.net}).}
\and Pratik Patil{\footnotemark[1]}\phantom{a}\thanks{Department of Statistics, University of California, Berkeley, CA 94720 (\email{pratikpatil@berkeley.edu}, \mbox{\email{ryantibs@berkeley.edu}}).}
\and Hamid Javadi{\thanks{Google, Mountain View, CA 94043 (\email{hamid71reza@gmail.com}).}} ,\\
Richard G.\ Baraniuk\thanks{Department of Electrical and Computer Engineering, Rice University, Houston, TX 77005 (\email{richb@rice.edu}).}
\and Ryan J.\ Tibshirani\footnotemark[3]
}
\newcommand*{\addFileDependency}[1]{%
  \typeout{(#1)}
  \@addtofilelist{#1}
  \IfFileExists{#1}{}{\typeout{No file #1.}}
}
     \let\Cref\crtCref
     \let\cref\crtCref
\newcommand{\EE}{\mathbb{E}}
\newcommand{\CC}{\mathbb{C}}
\newcommand{\RR}{\mathbb{R}}
\newcommand{\bA}{{\mathbf{A}}}
\newcommand{\bB}{{\mathbf{B}}}
\newcommand{\bC}{{\mathbf{C}}}
\newcommand{\bD}{{\mathbf{D}}}
\newcommand{\bI}{{\mathbf{I}}}
\newcommand{\bS}{{\mathbf{S}}}
\newcommand{\bX}{{\mathbf{X}}}
\newcommand{\bZ}{{\mathbf{Z}}}
\newcommand{\bx}{{\mathbf{x}}}
\newcommand{\bTheta}{{\bm{\Theta}}}
\newcommand{\bSigma}{{\bm{\Sigma}}}
\newcommand{\tv}{\widetilde{v}}
\newcommand{\op}{\rm op}
\newcommand{\asto}{\xrightarrow{\text{a.s.}}}
\def\1{\mathds{1}}
\newcommand{\asympequi}{\simeq}
\newcommand{\error}{\mathcal{E}}
\renewcommand{\Re}{\operatorname{Re}}
\renewcommand{\Im}{\operatorname{Im}}
\newcommand{\lambdaminnz}{\lambda_{\min}^{+}}
\begin{document}

\maketitle

\begin{abstract}
We take a random matrix theory approach to random sketching and show an asymptotic first-order equivalence of the regularized sketched pseudoinverse of a positive semidefinite matrix to a certain evaluation of the resolvent of the same matrix. We focus on real-valued regularization and extend previous results on an asymptotic equivalence of random matrices to the real setting, providing a precise characterization of the equivalence even under negative regularization, including a precise characterization of the smallest nonzero eigenvalue of the sketched matrix, which may be of independent interest. We then further characterize the second-order equivalence of the sketched pseudoinverse. 
\editedinline{We also apply our results to the analysis of the sketch-and-project method and to sketched ridge regression.}
Lastly, we 
\editedinlinetwo{prove}
that these results generalize to asymptotically free sketching matrices, obtaining the resulting equivalence for orthogonal sketching matrices and comparing our results to several common sketches used in practice.
\end{abstract}

\begin{keywords}
  Sketching, 
  random projections, 
  pseudoinverse,
  proportional asymptotics, 
  random matrix theory.
\end{keywords}

\begin{MSCcodes}
15B52,
46L54,
62J07.
\end{MSCcodes}

\section{Introduction}

In large-scale data processing systems, \emph{sketching} or \emph{random projections} play an essential role in making computation efficient and tractable. The basic idea is to replace high-dimensional data by relatively low-dimensional random linear projections of the data such that distances are preserved.
It is well-known that sketching can significantly reduce the size of the data without harming statistical performance, while providing a dramatic computational advantage \cite{pmlr-v80-aghazadeh18a,gower2015randomized,lacotte2019adaptive,wang_lee_mahdavi_kolar_srebro_2017}. 
For a summary of 
results on the applications of 
sketching in optimization and 
numerical linear algebra, we refer the reader to \cite{mahoney2011randomized,woodruff2014sketching}. 

In this work, we present a different kind of result than the usual sketching guarantee. Typically, sketching is guaranteed to preserve the output or statistical performance of computational methods with an error term that vanishes for sufficiently large sketch sizes \cite{avron2017faster, bakshi2020robust,clarkson2014sketching, ivkin2019communication, pilanci2016iterative, woodruff2021very}. In contrast, we characterize the precise way in which the solution to a computational problem changes when operating on a sketched version of data instead of the original data, showing that sketching induces a specific type of regularization.

Our primary contribution is a statement about the effect of sketching on the (regularized) pseudoinverse of a matrix. An informal statement of our result is as follows. Here the notation $\mA \asympequi \mB$ for two matrices $\mA$ and $\mB$ indicates an asymptotic first-order equivalence, which we define in \Cref{sec:preliminaries}, and $\lambdaminnz(\mA)$ is the smallest nonzero eigenvalue of a matrix $\mA$. We refer to $\mS \biginv{\mS^\ctransp \mA \mS + \lambda \mI_q} \mS^\ctransp$ as the sketched (regularized) pseudoinverse of $\mA$, because when $\mS$ has orthonormal columns, the pseudoinverse of $\mS \mS^\ctransp \mA \mS \mS^\ctransp$ is equal to $\mS (\mS^\ctransp \mA \mS)^{-1} \mS^\ctransp$. This expression is also related to the Nystr\"om approximation of $\mA$.

\begin{inftheorem}[\Cref{thm:sketched-pseudoinverse,thm:general-free-sketching}, informal]
    \label{thm:sketched-pseudoinverse-informal}
    Given a positive semidefinite matrix $\mA \in \complexset^{p \times p}$ and sketching matrix $\mS \in \complexset^{p \times q}$, for any $\lambda > -\lambdaminnz(\mS^\ctransp \mA \mS)$, there exists $\mu \in \reals$ such that
    \begin{align}
        \mS \biginv{\mS^\ctransp \mA \mS + \lambda \mI_q} \mS^\ctransp \asympequi \inv{\mA + \mu \mI_p}.
    \end{align}
\end{inftheorem}

The general implication of this result is that when we do computation using the sketched version of a matrix, there is a sense in which it is as if we were using additional ridge regularization. More precisely, when we solve (regularized) linear systems on a sketched version of the data and apply this solution to the sketched data, it is equivalent in a first-order sense to solving a regularized linear system in the original space. To see this, consider for example a least squares problem $\min_\vbeta \norm[2]{\vy - \mX \vbeta}^2$. The first-order optimality condition is $\mX^\ctransp \mX \vbeta = \mX^\ctransp \vy$, and if we replace $\mX$ by a sketch $\mX \mS$, we have the solution in the sketched domain $\widehat{\vbeta}_\mS = (\mS^\ctransp \mX^\ctransp \mX \mS)^{-1} \mS^\ctransp \mX^\ctransp \vy$. 
\begin{editedthree}
If we then measure this solution in some sketched direction $\mS^\ctransp \vu$ for some independent unit vector $\vu$, we obtain $\hat{\beta}_{\vu} = \vu^\ctransp \mS \widehat{\vbeta}_\mS = \vu^\ctransp \mS (\mS^\ctransp \mX^\ctransp \mX \mS)^{-1} \mS^\ctransp \mX^\ctransp \vy$. By our result, this is asymptotically equivalent to measuring $\hat{\beta}_{\vu} \asympequi \vu^\ctransp (\mX^\ctransp \mX + \mu \mI)^{-1} \mX^\ctransp \vy$---that is, as if we had solved the original least squares problem using some regularization $\mu$.%
\end{editedthree}
\subsection*{Summary of contributions}

Below we summarize the main contributions of the paper.

\begin{enumerate}
    \item \textbf{Real-valued equivalence.} We extend previous results from random matrix theory \cite{rubio_mestre_2011} \editedinlinetwo{for i.i.d.\ random matrices} to real-valued regularization, explicitly characterizing
    the behaviour of the associated fixed-point
    equation extended from the complex half-plane to the reals, allowing
    for consideration of negative regularization. This result includes what is to the best of our knowledge the first characterization of the limiting smallest nonzero eigenvalue of arbitrary \editedinlinetwo{Wishart type} sample covariance matrices, which may be of independent interest.
    \item \textbf{First-order equivalence.} Applying the real-valued equivalence, we obtain a first-order equivalence for the ridge-regularized \editedinlinetwo{i.i.d.}\ sketched pseudoinverse.
    \item \textbf{Second-order equivalence.} Using the calculus of asymptotic equivalents,
    we also obtain a second-order equivalence
    for the ridge-regularized \editedinlinetwo{i.i.d.}\ sketched pseudoinverse that captures a variance-like inflation due to the randomness of sketching.
    \item \textbf{Equivalence properties.} We provide a thorough investigation of the theoretical properties of the equivalence relationship, such as how the induced regularization depends on the original applied regularization, sketch size, and matrix rank.
    \item 
    \editedinline{\textbf{Applications.} We demonstrate how to apply our results by performing novel analysis of sketch-and-project \cite{gower2015randomized} and sketched ridge regression.}
    \item \editedinlinetwo{\textbf{Free sketching.} Finally, we extend the scope of our results
    for first-order equivalence of the sketched pseudoinverse beyond i.i.d.\ sketching to general asymptotically free sketching and specialize to orthogonal sketching matrices.}
\end{enumerate}

\subsection*{Related work}

The existence of an implicit regularization effect of sketching or random projections has been known for some time \cite{derezinski2021determinantal,leamer1976bayesian, rudi2015less, thanei_heinze_meinshausen_2017}. %
While prior works have demonstrated clear theoretical and empirical statistical advantages of sketching, our understanding of the precise nature of this implicit regularization has been largely limited to quantities such as error bounds. We provide, in contrast, a precise asymptotic characterization of the solution obtained by a sketching-based solver, not only enabling the understanding of the statistical performance of sketching-based methods, but also opening the door for exploiting the specific regularization induced by sketching in future algorithms.

Our results in this work provide a general extension 
of a few results appearing in recent works that have revealed explicit characterizations of the implicit regularization effects induced by random subsampling. To the best of our knowledge, the first such result was presented by \cite{lejeune_javadi_baraniuk_2020}, who showed that ensembles of (unregularized) ordinary least squares predictors on randomly subsampled observations and features converge in an $\ell_2$ metric to an (optimal) ridge regression solution in the proportional asymptotics regime. 
This result was limited in several aspects: 
a) it required a strong isotropic Gaussian data assumption;
b) it required the subsampled data to have more observations than features;
c) it considered only unregularized base learners in the ensemble;
d) it required an ensemble of infinite size to show 
the ridge regression equivalence;
e) it provided only a marginal guarantee of convergence 
over the data distribution 
rather than a single-instance convergence guarantee;
and 
f) it did not provide the relationship between the subsampling ratio and the amount of induced ridge regularization. In addition, the proof relied on rote computation of expectations of matrix quantities, providing limited insight into the underlying mathematical principles at work. The result we present in this work in \cref{thm:sketched-pseudoinverse} addresses all of these issues.

Around the same time, \cite{pmlr-v108-mutny20a} showed the remarkably simple result that the expected value of the pseudoinverse of any positive definite matrix sampled by a determinantal point process (DPP) is equal to a resolvent of the matrix. Similarly to the result by \cite{lejeune_javadi_baraniuk_2020}, this result demonstrated that when random subsampling is applied in techniques without any regularization, the resulting solution is as if a regularized technique was used on the original data. This result provided a simple form of the argument of the induced resolvent as a solution to a matrix trace equation, which is analogous the results we present in this work for sketching. 
The same authors later empirically demonstrated that the same effects occur when using i.i.d.\ Gaussian and Rademacher sketches \cite{derezinski_surrogate_2020} 
\editedinline{and obtained a first-order equivalent for certain sub-Gaussian sketched projection operators \cite{derezinski2020precise}} \editedinlinetwo{and first- and second-order moments for certain debiased sketches \cite{derezinski2021newtonless}.}
\editedinlinetwo{Our work generalizes these later developments and}
also 
differs from these works in that we provide a single-instance equivalent ridge regularization in the asymptotic regime, rather than an expectation over the random \editedinline{projections}.

Our results also echo the finite-sample results of \cite{pmlr-v134-derezinski21a}, who showed that the unregularized inverse of a particular sketched matrix form has a merely multiplicative bias for sketch size minimally larger than the rank of the original matrix. This is captured by \cref{cor:basic-ridge-asympequi-in-r} in our work when $z \to 0$, combined with \cref{rem:mu-prime-to-0} in which we observe that there is asymptotically no spectral distortion in the range of the original matrix for sketches larger than the rank.

\begin{edited}
Our work leverages techniques from random matrix theory \cite{rubio_mestre_2011}, and the techniques employed bear some resemblance to other recent work in high dimensional statistical analysis \cite{derezinski2020exact,dobriban_wager_2018,hastie_montanari_rosset_tibshirani_2022}. In particular, we leverage the calculus of deterministic equivalences as presented by \cite{dobriban_sheng_2021}. 
However, 
instead of characterizing only very specific quantities such as in-distribution generalization error, requiring tedious updates to the proof to adapt to other quantities of interest, we have isolated the expressions that will be needed to analyze \emph{any} quadratic functional of the sketched pseudoinverse.
In addition, instead of characterizing $\mA^{1/2} \mS \biginv{\mS^\ctransp \mA \mS + \lambda \mI_q} \mS^\ctransp \mA^{1/2}$ (as considered, e.g., by \cite{derezinski2020precise} for $\lambda = 0$) which is a simple reparameterization of $\biginv{\mA^{1/2} \mS^\ctransp \mS \mA^{1/2} + \lambda \mI_p}$ and therefore straightforwardly understood through equivalences for sample covariance matrices \cite{ledoit_peche_2011,rubio_mestre_2011}, we characterize the quantity $\mS \biginv{\mS^\ctransp \mA \mS + \lambda \mI_q} \mS^\ctransp$ which is essential for asymmetric applications such as ridge regression without data assumptions (see example in \Cref{sec:sketched-ridge}). 

Our application of our results to sketch-and-project \cite{gower2015randomized} improves upon recent work by \cite{derezinski2020precise} in that we are also able to calculate asymptotic computational complexity as a function of sketch size thanks to the uniformity of convergence over bounded sketching ratios and the ability to consider sparse sketches that can be applied in $O(q^2)$ time (see \cref{rem:sparse-sketching}).

Other works have considered other types of sketches that do not have the same random matrix properties as the matrices we consider in our \editedinlinetwo{main} results. In particular, fast sketching techniques such as CountSketch \cite{charikar2002frequent} and the subsampled randomized Hadamard transform (SRHT) \cite{tropp2011hadamard} are among the most popular random projections in practice, since they can be applied in only $O(p \log p)$ time rather than $O(pq)$ or $O(q^2)$ for i.i.d.\ sketches. Very little is known about the properties of these sketches under proportional asymptotics; we know only of \cite{lacotte2020optimal} who analyzed specific first and second moments in the isotropic case for the SRHT.
\begin{editedtwo}
Other prior work has shown universality of certain sketching inversion bias behavior under any rotationally invariant sketch~\cite{pmlr-v134-derezinski21a}.
We show that our results generalize to the broader class of ``free'' sketches in \Cref{thm:general-free-sketching} using free probability \cite{voiculescu1992free,mingo2017free} and specialize to an exact formula for orthogonal sketching in \Cref{cor:orthonormal-sketch}.
\end{editedtwo}
Then we empirically show that fast sketches commonly used in practice behave according to our
\editedinlinetwo{generalization. }

A few works have shown that under certain data geometry and noise, the optimal ridge regression parameter can be negative \cite{kobak_lomond_sanchez_2020, wu_xu_2020}. For this reason, we take special care to determine the limit of allowable negative regularization in sketched settings. Then in a ridge regression example in \Cref{sec:sketched-ridge}, we demonstrate how negative regularization can be optimal for standard noisy learning problems in undersampled distributed optimization settings.
\end{edited}

\subsection*{Organization}

The rest of the paper is structured as follows.
In \Cref{sec:preliminaries},
we start with some preliminaries
on the language of asymptotic equivalence
of random matrices that we will use to state our results.
In \Cref{sec:real-valued-equivalence},
we extend a previous result on asymptotic equivalence
for a ridge regularized resolvent
to include real-valued negative regularization
and provide a precise limiting lower limit
of the permitted negative regularization.
In \Cref{sec:main_results},
we provide our main results
about the first- and second-order
equivalence of the sketched pseudoinverse.
Then, in \Cref{sec:properties}, we explore properties of the equivalence and present illustrative examples.
\editedinline{In \Cref{sec:applications}, we perform novel analysis of two sketching based optimization methods.}
Finally, in \Cref{sec:discussion},
we conclude by giving
various extensions and providing
a
\editedinlinetwo{generalization of}
the asymptotic
behaviour of sketched pseudoinverse
for a broad family of sketching matrices
using the insights obtained from the proof
of our main result and experimentally compare sketches commonly used in practice to our theory.
Our code for generating all figures can be found at \url{https://github.com/dlej/sketched-pseudoinverse}.

\subsection*{Notation}
We denote the real line by $\RR$
and the complex plane by $\CC$.
For a complex number $z = x + iy$,
$\Re(z)$ denotes its real part $x$,
$\Im(z)$ denotes its imaginary part $y$,
and $\overline{z} = x - iy$ denotes its conjugate.
We use $\RR_{\ge 0}$ and $\RR_{> 0}$ to be denote 
the set of non-negative and positive real numbers, respectively;
similarly, $\RR_{\le 0}$ and $\RR_{< 0}$ respectively denote
the set of non-positive and negative real numbers.
We use $\CC^{+} = \{ z \in \CC : \Im(z) > 0 \}$ to denote 
the upper half of the complex plane
and $\CC^{-} = \{ z \in \CC : \Im(z) < 0 \}$ to denote 
the lower half of the complex plane. 

We denote vectors in lowercase bold letters (e.g., $\vy$)
and matrices in uppercase bold letters (e.g., $\mX$).
For a vector $\vy$, 
$\| \vy \|_2$ denotes its $\ell_2$ norm.
For a rectangular matrix $\bS \in \CC^{p \times q}$, 
$\bS^\ctransp \in \CC^{q \times p}$
denotes its conjugate or Hermitian transpose
(such that $[\mS^\ctransp]_{ij} = \overline{[\mS]_{ji}}$),
$\norm[\tr]{\mS}$ denotes its trace norm (or nuclear norm),
that is $\norm[\tr]{\mS} = \tr\bracket{(\mS^\ctransp \mS)^{1/2}}$,
and $\| \bS \|_{\op}$ denotes the operator norm
with respect to the $\ell_2$ vector norm
(which is also its spectral norm).
For a square matrix $\bA \in \CC^{p \times p}$,
$\tr[\bA]$ denotes its trace,
$\mathrm{rank}(\mA)$ denotes its rank,
$r(\mA) = \frac{1}{p} \mathrm{rank}(\mA)$
denotes its relative rank,
and $\bA^{-1} \in \CC^{p \times p}$ denotes its inverse,
if it is invertible.
\editedinline{For any matrix $\mA \in \CC^{p \times q}$,
$\mA^\dagger$ denotes the Moore--Penrose inverse.}
For a positive semidefinite matrix $\bA \in \CC^{p \times p}$,
$\bA^{1/2} \in \CC^{p \times p}$ denotes its positive semidefinite 
principal square root,
$\lambda_{\min}(\bA)$ its smallest eigenvalue, 
and $\lambdaminnz(\bA)$ its smallest positive eigenvalue.

A sequence $x_n$ converging to $x_{\infty}$ from the left 
or right is denoted by $x \nearrow x_{\infty}$ or $x \searrow x_{\infty}$, respectively.
We denote almost sure convergence by $\asto$.

\section{Preliminaries}
\label{sec:preliminaries}

We will use the language of asymptotic equivalence of sequences of random matrices
to state our main results.
In this section, we define the notion of asymptotic equivalence,
review some of the basic properties that such equivalence satisfies, 
and present an asymptotic equivalence for the ridge resolvent.
We then extend that result to handle real-valued resolvents, which
will form the building block for our subsequent results.

To begin, consider two sequences $\bA_n$ and $\bB_n$ of $p(n) \times q(n)$ matrices, where $p$ and $q$ are increasing in $n$.
We will say that $\bA_n$ and $\bB_n$ are asymptotically equivalent
if for any sequence of deterministic 
matrices $\bTheta_n$ with trace norm uniformly bounded in $n$,
we have $\tr[\bTheta_n (\bA_n - \bB_n)] \asto 0$
as $n \nearrow \infty$.
We write $\bA_n \asympequi \bB_n$ to denote this asymptotic equivalence.\footnote{\editedinline{When we use the same notation for a vector or scalar equivalence, it can be understood as applying this definition to a $p(n) \times 1$ or $1 \times 1$ matrix, respectively.}}
The notion of \emph{deterministic} equivalence,
where the right-hand sequence is a sequence of deterministic matrices, 
has been typically used in random matrix theory to obtain limiting behaviour
of functionals of random matrices;
for example,
see 
\cite{couillet_debbah_silverstein_2011,hachem_loubaton_najim_2006,serdobolskii_2000},
among others.
More recently, the notion of deterministic equivalence 
has been popularized and developed further 
in \cite{dobriban_wonder_2020,dobriban_sheng_2021}\footnote{Note that 
\cite{dobriban_wonder_2020,dobriban_sheng_2021}
use the notation $\bA_n \asymp \bB_n$
to denote deterministic equivalence of sequence $\bA_n$ to $\bB_n$.
We instead use the notation $\bA_n \asympequi \bB_n$
to emphasize that this equivalence is asymptotically exact, rather than up to constants.}.
We will use a slightly more general notion of asymptotic equivalence
in this paper, where both sequences of matrices may be random. 

The notion of asymptotic equivalence enjoys some properties
that we list next.
The majority of these are stated in the context of deterministic equivalence
in \cite{dobriban_wonder_2020,dobriban_sheng_2021}, but they also hold more generally for
asymptotic equivalence.
For the statements to follow,
let $\bA_n$, $\bB_n$, $\bC_n$, and $\bD_n$ be sequences of random or deterministic matrices
(of appropriate dimensions).
Then the following properties hold:
\begin{enumerate}
    \item \textbf{Equivalence.}
    The relation $\asympequi$ is an equivalence relation.
    \item \textbf{Sum.}
    If $\bA_n \asympequi \bB_n$ and $\bC_n \asympequi \bD_n$,
    then $\bA_n + \bC_n \asympequi \bB_n + \bD_n$.
    \item \textbf{Product.}
    If $\mA_n \asympequi \mB_n$, and $\mC_n$ is independent of $\mA_n$ and $\mB_n$ with operator norm bounded in $n$
    almost surely, 
    then $\bA_n \bC_n \asympequi \bB_n \bC_n$.
    \item \textbf{Trace.}
    If $\bA_n \asympequi \bB_n$ for square matrices $\bA_n$ and $\bB_n$ of dimension $p(n) \times p(n)$, 
    then \editedinline{$\tfrac{1}{p(n)} \tr[\bA_n] \asympequi \tfrac{1}{p(n)} \tr[\bB_n]$}.
    \item \textbf{Elements.} If $\mA_n \asympequi \mB_n$ for $\mA_n, \mB_n$ of dimension $p(n) \times q(n)$ and $i(n) \in \set{1, \ldots, p(n)}$ and $j(n) \in \set{1, \ldots, q(n)}$, then \editedinline{$[\mA_n]_{i(n), j(n)} \asympequi [\mB_n]_{i(n), j(n)}$}.
    \item \textbf{Differentiation.}
    Suppose $f(z, \bA_n) \asympequi g(z, \bB_n)$
    where the entries of $f$ and $g$ are analytic
    functions in $z \in D$ and $D$ is an open connected subset of $\CC$.
    Furthermore, suppose for any sequence $\bTheta_n$ of deterministic 
    matrices with trace norm uniformly bounded in $n$,
    we have that $|\tr[\bTheta_n (f(z, \bA_n) - g(z, \bB_n))]| \le M$
    for every $n$ and $z \in D$ for some constant $M < \infty$.
    Then we have that $f'(z, \bA_n) \asympequi g'(z, \bB_n)$
    for every $z \in D$,
    where the derivatives are taken entry-wise with respect to $z$.
\end{enumerate}

The almost sure convergence in the statements above
is with respect to the entire randomness in the random variables involved.
One can also consider
the notion of conditional asymptotic equivalence
wherein we condition on a sequence of random matrices.
More precisely,
suppose $\bA_n$, $\bB_n$ are sequence of random matrices
that may depend of another sequence of random matrices $\bZ_n$.
We call $\bA_n$ and $\bB_n$ to be asymptotically equivalent
conditioned on $\bZ_n$,
if for any sequence of deterministic matrices
$\bTheta_n$
with trace norm uniformly bounded in $n$,
we have $\lim_{n \nearrow \infty} \tr[\bTheta_n (\bA_n - \bB_n)] = 0$ almost surely conditioned on $\bZ_n$.
Similar properties to those listed above for unconditional asymptotic equivalence
also hold for conditional equivalence by considering all the statements
conditioned on the sequence $\bZ_n$.
In particular,
for the product rule,
we require that the sequence $\bC_n$ be \emph{conditionally}
independent of $\bA_n$ and $\bB_n$ given $\bZ_n$.
Finally,
for our asymptotic statements,
we will work with sequences of matrices,
indexed by either $n$ or $p$.
However, for notational brevity,
we will drop the index from now on whenever it is clear from the context.

Equipped with the notion of asymptotic equivalence,
below we state a result on the \mbox{asymptotic} deterministic
equivalence for ridge resolvents of \editedinlinetwo{Wishart type} matrices,
adapted from Theorem 1 of \cite{rubio_mestre_2011} and Theorem 3.1 of \cite{dobriban_sheng_2021},
that will form a base for our results.

\begin{lemma}[Basic asymptotic equivalent for ridge resolvents, complex-valued regularization]
    \label{lem:basic-ridge-asympequi}%
    Let $\mZ \in \complexset^{n \times p}$ be a random matrix consisting of i.i.d.\ random variables that have mean 0, variance 1, and finite 
    absolute moment of order
    $8 + \delta$ for some $\delta > 0$. Let $\mSigma \in \complexset^{p \times p}$ be a positive semidefinite matrix with operator norm uniformly bounded in $p$, and let $\mX = \mZ \mSigma^{1/2}$. 
    Then, for $z \in \complexset^+$,
    as $n, p \nearrow \infty$ such that
    $0 < \liminf \tfrac{p}{n} \le \limsup \tfrac{p}{n} < \infty$,
    we have
    \begin{equation}
        \label{eq:basic-ridge-asympequi-in-c}
        \big( \tfrac{1}{n} \bX^\ctransp \bX - z \bI_p \big)^{-1}
        \asympequi
        \inv{c(z) \bSigma -z \bI_p},
    \end{equation}
    where $c(z)$ is the unique solution in $\complexset^-$ to the fixed point equation
    \begin{equation}
        \label{eq:basic-ridge-fp-in-c}
        \frac{1}{c(z)} - 1
        = \tfrac{1}{n} \tr \bracket{\bSigma \inv{c(z) \bSigma - z \bI_p}}.
    \end{equation}
    Furthermore, 
    $\tfrac{1}{p} \tr\bracket{\bSigma (c(z) \bSigma - z \bI_p)^{-1}}$
    is a Stieltjes transform of a certain positive measure on $\RR_{\ge 0}$
    with total mass $\tfrac{1}{p} \tr[\bSigma]$.
\end{lemma}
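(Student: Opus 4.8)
The identity~\eqref{eq:basic-ridge-asympequi-in-c} together with the fixed-point characterization~\eqref{eq:basic-ridge-fp-in-c} is essentially a restatement of known results, so the plan is to obtain it by transcribing Theorem~1 of \cite{rubio_mestre_2011}---the deterministic equivalent for resolvents of separable-covariance matrices under an $(8+\delta)$-th moment bound and a bounded aspect ratio---into the asymptotic-equivalence calculus of \cite{dobriban_sheng_2021} (their Theorem~3.1). The substantive points to check are bookkeeping: reconciling the normalization and sign conventions ($z \in \CC^+$ here versus their shifted/negated spectral parameter, and the $p \times p$, possibly singular, matrix $\tfrac1n \bX^\ctransp \bX$ versus the $n \times n$ companion Gram matrix), verifying that our moment hypothesis implies theirs, and selecting the correct branch of the fixed point. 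For the last point I would run the standard argument: for $z \in \CC^+$ the map $c \mapsto \bigl(1 + \tfrac1n \tr[\bSigma(c\bSigma - z\bI_p)^{-1}]\bigr)^{-1}$ is well defined on $\CC^-$, maps $\CC^-$ into itself, and is a strict contraction for the hyperbolic metric on the half-plane (equivalently one invokes monotonicity of the relevant Stieltjes transforms), which gives the unique solution $c(z) \in \CC^-$; analyticity of $c$ in $z$ then follows from the implicit function theorem, or from exhibiting $c$ as a locally uniform limit of analytic functions.

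For the ``furthermore'' claim, the first observation is that~\eqref{eq:basic-ridge-fp-in-c} expresses the quantity of interest directly through the fixed point: writing $m_p(z) := \tfrac1p \tr[\bSigma(c(z)\bSigma - z\bI_p)^{-1}]$, we have $m_p(z) = \tfrac{n}{p}\bigl(\tfrac{1}{c(z)} - 1\bigr)$, so the claim is really a statement about $c$ and is already built into its construction in \cite{rubio_mestre_2011,ledoit_peche_2011} as (a normalization of) a companion Stieltjes transform supported on $[0,\infty)$. Alternatively, I would argue directly via the Nevanlinna/Herglotz representation. Diagonalizing $\bSigma = \sum_k t_k \bv_k \bv_k^\ctransp$ with $t_k \ge 0$ gives $m_p(z) = \tfrac1p \sum_k t_k (c(z)t_k - z)^{-1}$; since $c(z) \in \CC^-$ and $t_k \ge 0$, each eigenvalue $c(z)t_k - z$ of $c(z)\bSigma - z\bI_p$ has strictly negative imaginary part, so its reciprocal lies in $\CC^+$ and hence $m_p$ is analytic on $\CC^+$ with nonnegative imaginary part there. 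Next,~\eqref{eq:basic-ridge-fp-in-c} forces $c(iy) \to 1$ as $y \to +\infty$ (its right-hand side vanishes), whence $-iy\, m_p(iy) = \tfrac1p \tr\bigl[\bSigma(\bI_p - \tfrac{c(iy)}{iy}\bSigma)^{-1}\bigr] \to \tfrac1p \tr[\bSigma]$; in particular $\sup_{y \ge 1} |iy\, m_p(iy)| < \infty$. These two facts imply, by the Nevanlinna representation (the bounded decay killing the linear term and rendering the measure finite), that $m_p(z) = \int_\RR (t - z)^{-1}\, d\nu_p(t)$ for a finite positive Borel measure $\nu_p$ with total mass $\nu_p(\RR) = \lim_{y \to +\infty}(-iy) m_p(iy) = \tfrac1p \tr[\bSigma]$. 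Finally, the support inclusion $\operatorname{supp}\nu_p \subseteq \RR_{\ge 0}$ follows from the Stieltjes inversion formula once one shows $\Im m_p(x + i\eta) \to 0$ as $\eta \downarrow 0$ for every $x < 0$, equivalently that $c$ (hence $m_p$) continues analytically and real-valuedly across $(-\infty, 0)$; there the fixed-point equation admits a positive real solution $c(x)$ with $c(x)\bSigma + |x|\bI_p \succ 0$, which is the boundary value $\lim_{\eta \downarrow 0} c(x+i\eta)$ of the $\CC^-$ branch.

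Of these steps, the analyticity, the imaginary-part sign, and the $z \to i\infty$ evaluation of the mass are routine; the genuinely delicate point is the last one---ruling out boundary spectral mass of $\nu_p$ on $\RR_{<0}$, equivalently showing that the real fixed-point branch connects analytically to the complex one. This is precisely the regime that \Cref{sec:real-valued-equivalence} is devoted to, and the cleanest way to close the gap here is to import the corresponding spectral-support fact from the classical construction of the companion transform in random matrix theory, where the fixed-point solution is produced from the outset as the Stieltjes transform of a measure on $[0,\infty)$; the direct verification that $m_p$ continues analytically to $\CC \setminus \RR_{\ge 0}$ and is real there is exactly the content of the Stieltjes-transform conclusion, so it is this analytic-continuation step that does all the work.
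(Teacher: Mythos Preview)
The paper does not prove this lemma at all: it is stated without proof as an adaptation of Theorem~1 of \cite{rubio_mestre_2011} and Theorem~3.1 of \cite{dobriban_sheng_2021}, and the supplementary material contains no dedicated argument for it. Your proposal is therefore not a comparison against a paper proof but rather a self-contained justification of a cited result, and as such it is correct and more detailed than anything the paper offers.

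One small remark on your ``furthermore'' argument: your direct Nevanlinna verification is fine, but you are right to flag the support-on-$\RR_{\ge 0}$ step as the delicate one. The paper implicitly relies on exactly the literature you cite for this (the construction in \cite{rubio_mestre_2011} already produces the fixed point as a companion Stieltjes transform of a nonnegatively supported measure), and indeed the paper later uses this Stieltjes-transform property as an input to its own \Cref{cor:basic-ridge-asympequi-in-r} via \Cref{fact:lim_stiletjes_im}. So your instinct to import the spectral-support fact from the classical construction rather than rederive it is the efficient route and matches how the paper treats the lemma---as a black box from the random-matrix literature.
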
       
Strictly speaking,
the results in \cite{rubio_mestre_2011} and \cite{dobriban_sheng_2021}
require that the sequence $\bSigma$ be deterministic.
However, one can take $\bSigma$ to be a random sequence of matrices
that are independent of $\bZ$;
see, for example, \cite{ledoit_peche_2011}.
In this case,
the asymptotic equivalence is treated conditionally on $\bSigma$.

\section{Real-valued equivalence}
\label{sec:real-valued-equivalence}

For real-valued negative $z$, corresponding to positive ridge regularization, 
we remark that one can use \cref{lem:basic-ridge-asympequi}
to derive limits of linear and 
certain non-linear functionals 
(through the calculus rules of asymptotic equivalence)
of the ridge resolvent 
$(\tfrac{1}{n} \bX^\ctransp \bX - z \bI_p)^{-1}$
by considering $z \in \CC^{+}$ with $\Re(z) < 0$
and letting $\Im(z) \searrow 0$.
This follows because
a short calculation (see proof of \cref{cor:basic-ridge-asympequi-in-r}) 
shows that $\Im(c(z)) \nearrow 0$
as $\Im(z) \searrow 0$ for $z \in \CC^{+}$ with $\Re(z) < 0$.
Thus one can recover a real limit
from the right hand side of \cref{eq:basic-ridge-asympequi-in-c}
through a limiting argument.
Moreover,
it is easy to see that
the fixed-point equation \cref{eq:basic-ridge-fp-in-c}
has a unique (real) solution $c(z) > 0$
for $z \in \RR_{< 0}$. 

However, it has recently been pointed out that
under certain special data geometry,
negative regularization is often beneficial, in real data experiments \cite{kobak_lomond_sanchez_2020} as well as in theoretical formulations where it can achieve optimal squared prediction risk \cite{wu_xu_2020}.
One can still recover such a case
by considering $z \in \CC^{+}$
with $\Re(z) > 0$ over a valid range, and taking the limit as $\Im(z) \searrow 0$.
However, solving the fixed-point equation \cref{eq:basic-ridge-fp-in-c}
over reals directly in this case, which is the most efficient way to compute the solution numerically, poses certain subtleties
as we no longer can guarantee a unique real solution for $c(z)$. 

Our next theorem shows how to handle this case.
We will make use of this for our results on sketching
in \Cref{sec:main_results}, but we believe the result to be of independent interest and worth stating on its own. In addition to enabling the computation of the asymptotic equivalence for non-negative real-valued $z$, it also provides the asymptotic value of $\lambdaminnz(\tfrac{1}{n} \mX^\ctransp \mX)$ (given by $z_0$ in the theorem statement) for arbitrary $\mSigma$, 
which to our knowledge
is the first
\editedinlinethree{explicit} general characterization of the smallest nonzero eigenvalue of \editedinlinethree{Wishart-type matrices, although the underlying principles are known in random matrix theory~\cite{silverstein1995analysis} and have been applied algorithmically~\cite{dobriban2015efficient}. We note that our characterization enables an extremely efficient and simple approach for computing $z_0$ via direct root finding in $\zeta_0$.}
\editedinlinethree{
Furthermore, $z_0$ improves significantly on the na\"ive lower bounds commonly used in theoretical works~\cite{pmlr-v151-patil22a,wu_xu_2020}, as~seen~in~\Cref{fig:lambda-minnz-bound}.}

\begin{figure}[t]
    \centering
    \includegraphics[width=5.5in]{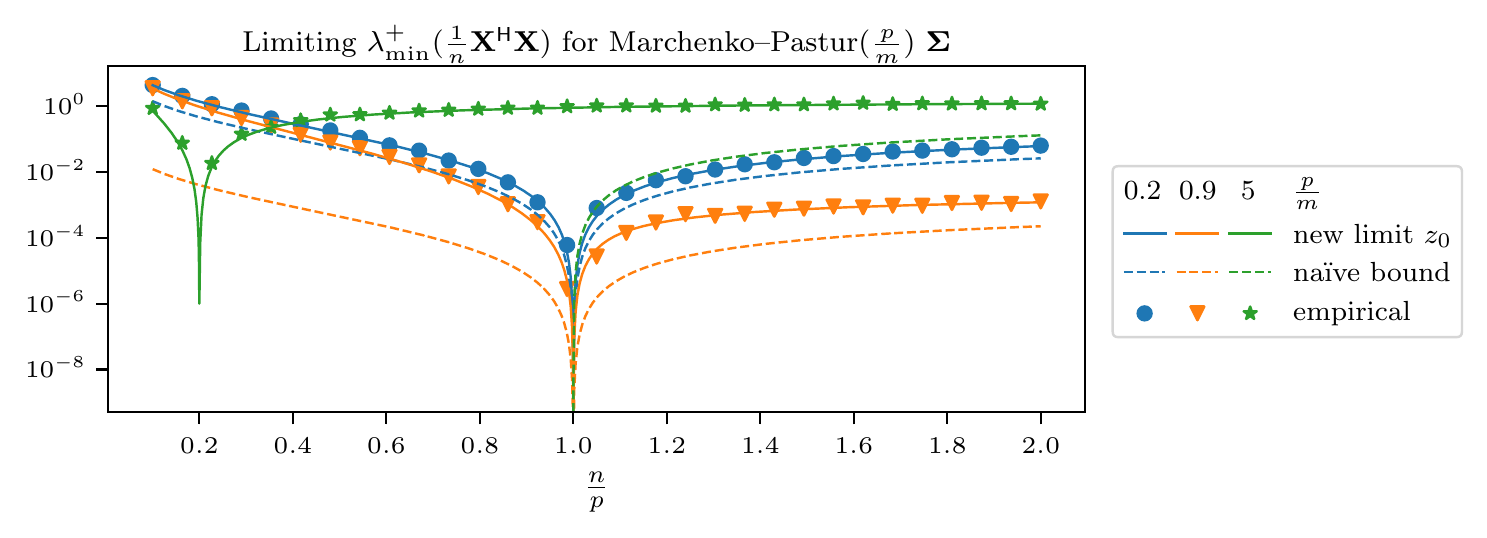}
    \caption{Plots showing how $z_0$ (solid) from \cref{eq:basic-ridge-asympequi-in-r:bounds} matches the empirical minimum nonzero eigenvalue (markers) of $\tfrac{1}{n} \mX^\transp \mX$ when $\mSigma = \tfrac{1}{m} \mY^\transp \mY$ for $\mY \in \reals^{m \times p}$ with i.i.d.\ $\normal(0, 1)$ elements, such that the limiting spectrum of $\mSigma$ follows the $\mathrm{Marchenko}\text{--}\mathrm{Pastur}(\tfrac{p}{m})$ distribution for $\tfrac{p}{m} \in \set{0.2, 0.9, 5}$. In contrast, the commonly used na\"ive bound (dashed)
    $\liminf \lambdaminnz(\tfrac{1}{n} \mX^\transp \mX) \geq 
    \bigparen{1 - \sqrt{\tfrac{p}{m}}}^2 
    \bigparen{1 - \sqrt{\tfrac{p}{n}}}^2 
    \ind\set{p < \max\set{m, n}}$,
    obtained by multiplying the minimum nonzero eigenvalues of $\tfrac{1}{n} \mZ^\transp \mZ$ and $\mSigma$ when at most one of them is singular,
    is quite loose outside of the $m \gg p$ and $n \gg p$ cases and fails to capture the correct behavior at all when both are singular ($p > \max\set{m, n}$). Empirical values are computed for $p = 500$ for a single trial. }
    \label{fig:lambda-minnz-bound}
\end{figure}

\begin{theorem}
[Basic asymptotic equivalent for ridge resolvents, real-valued regularization]
\label{cor:basic-ridge-asympequi-in-r}
Assume the setting of \cref{lem:basic-ridge-asympequi}. 
Let $\zeta_0, z_0 \in \reals$ be the unique solutions, satisfying $\zeta_0 < \lambdaminnz(\bSigma)$,
to system of equations
\begin{align}
    \label{eq:basic-ridge-asympequi-in-r:bounds}
    1 = \tfrac{1}{n} \tr \bracket{\bSigma^2 \paren{\bSigma - \zeta_0 \bI_p}^{-2}}, \quad 
    z_0 = \zeta_0 \paren{1 - \tfrac{1}{n} \tr \bracket{\bSigma \inv{\bSigma - \zeta_0 \bI_p}}}.
\end{align}
Then, for each $z \in \reals$ satisfying $z < \liminf z_0$, as $n, p \nearrow \infty$ such that
$0 < \liminf \tfrac{p}{n} \le \limsup \tfrac{p}{n} < \infty$,
we have
\begin{align}
    \label{eq:basic-ridge-asympequi-in-r}
    z \big( \tfrac{1}{n} \bX^\ctransp \bX - z \bI_p \big)^{-1}
    \asympequi
    \zeta \inv{\bSigma - \zeta \bI_p},
\end{align}
where $\zeta \in \reals$ is the unique solution in $(-\infty, \zeta_0)$ to
the fixed-point equation
\begin{equation+}
    \label{eq:basic-ridge-fp-in-r}
    z = \zeta \paren{1 - \tfrac{1}{n} \tr \bracket{\bSigma \inv{\bSigma - \zeta \bI_p}}}.
\end{equation+}
Furthermore, as $n, p \nearrow \infty$,
\editedinline{$\zeta \asympequi -\tfrac{1}{v(z)}$,}
where 
$v(z)$ is the companion Stieltjes transform of the spectrum of $\frac{1}{n} \mX^\ctransp \mX$
given by
\[
    v(z) = \tfrac{1}{n} \tr \Big[\big(\tfrac{1}{n} \mX \mX^\ctransp - z \mI_n \big)^{-1}\Big],
\]
and \editedinline{$z_0 \asympequi \lambdaminnz(\frac{1}{n} \mX^\ctransp \mX)$}.
\end{theorem}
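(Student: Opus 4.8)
The plan is to derive everything from \cref{lem:basic-ridge-asympequi} by the reparameterization $\zeta = z/c(z)$, and then to pass carefully from $\CC^+$ to the real line. Writing $c(z) = z/\zeta$, one computes $z\,(c(z)\bSigma - z\bI_p)^{-1} = \zeta\,(\bSigma - \zeta\bI_p)^{-1}$, so multiplying \cref{eq:basic-ridge-asympequi-in-c} by the deterministic scalar $z$ gives exactly \cref{eq:basic-ridge-asympequi-in-r}; the same substitution turns \cref{eq:basic-ridge-fp-in-c} into \cref{eq:basic-ridge-fp-in-r}. Thus for $z \in \CC^+$ the asserted equivalence and fixed-point equation already hold with $\zeta = z/c(z)$, and what remains is (i) to analyze the real function $H(\zeta) := \zeta\,(1 - \tfrac1n\tr[\bSigma(\bSigma - \zeta\bI_p)^{-1}])$, identifying $\zeta_0$, $z_0$, and the correct branch of $\zeta$, and (ii) to show the equivalence survives the limit $\Im(z) \searrow 0$.

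For (i), restrict $H$ to $I := (-\infty, \lambdaminnz(\bSigma))$, on which it is analytic (the zero eigenvalues of $\bSigma$ contribute nothing). Using $\tfrac{d}{d\zeta}(\bSigma - \zeta\bI_p)^{-1} = (\bSigma - \zeta\bI_p)^{-2}$ one finds $H'(\zeta) = 1 - \tfrac1n\tr[\bSigma^2(\bSigma - \zeta\bI_p)^{-2}] =: 1 - \phi(\zeta)$, and $\phi(\zeta) = \tfrac1n\sum_{\sigma_i > 0}\sigma_i^2/(\sigma_i - \zeta)^2$ is strictly increasing on $I$ (each summand is, since $\sigma_i - \zeta > 0$ there), from $0$ at $-\infty$ to $+\infty$ as $\zeta \nearrow \lambdaminnz(\bSigma)$. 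Hence there is a unique $\zeta_0 \in I$ with $\phi(\zeta_0) = 1$, which is the first equation of \cref{eq:basic-ridge-asympequi-in-r:bounds}; taking $z_0 := H(\zeta_0)$ gives the second, and uniqueness of the pair subject to $\zeta_0 < \lambdaminnz(\bSigma)$ is immediate from the monotonicity of $\phi$. On $(-\infty, \zeta_0)$ we have $H' > 0$ and $H(\zeta) \to -\infty$ as $\zeta \to -\infty$, so $H$ is an increasing bijection $(-\infty, \zeta_0) \to (-\infty, z_0)$; since $z < \liminf z_0$ forces $z < z_0$ for all large $n$, this yields the unique solution $\zeta \in (-\infty, \zeta_0)$ of \cref{eq:basic-ridge-fp-in-r} for each such $z$.

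For (ii), I would show that $z \mapsto c(z)$ continues analytically from $\CC^+$ to a complex neighborhood of $(-\infty, z_0)$ and equals $z/\zeta(z)$ on that real segment: reading \cref{eq:basic-ridge-fp-in-c} as the equation \cref{eq:basic-ridge-fp-in-r} for $\zeta$, its linearization is nondegenerate precisely because $H'(\zeta) \ne 0$ for $\zeta < \zeta_0$, so the implicit function theorem produces an analytic solution branch near this segment, which agrees with the unique $\CC^-$-solution of \cref{lem:basic-ridge-asympequi} on $\CC^+$ by continuity as $\Im(z) \searrow 0$ (equivalently, $\Im(\zeta(z)) \nearrow 0$, as in the discussion preceding the theorem for $\Re(z) < 0$, using that $\tfrac1p\tr[\bSigma(c(z)\bSigma - z\bI_p)^{-1}]$ is a Stieltjes transform of a positive measure and that $(-\infty, z_0)$ lies left of its support). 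The equivalence \cref{eq:basic-ridge-asympequi-in-c}---which, by the standard resolvent arguments underlying \cref{lem:basic-ridge-asympequi}, holds for $z$ in any fixed compact subset of $\CC$ at positive distance from the limiting spectrum of $\tfrac1n\bX^\ctransp\bX$---then applies at real $z < \liminf z_0$ once that interval is known to avoid the spectrum, giving \cref{eq:basic-ridge-asympequi-in-r}. I expect step (ii), together with the a priori spectral-gap fact it invokes, to be the main obstacle.

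Finally, the two ``furthermore'' claims. For $\zeta \asympequi -1/v(z)$: from $c\bSigma = (c\bSigma - z\bI_p) + z\bI_p$ one gets $\bSigma(c\bSigma - z\bI_p)^{-1} = \tfrac1c\bI_p + \tfrac{z}{c}(c\bSigma - z\bI_p)^{-1}$, so \cref{eq:basic-ridge-fp-in-c} rearranges to $\tr[(c\bSigma - z\bI_p)^{-1}] = (n(1-c) - p)/z$; taking the normalized trace on both sides of \cref{eq:basic-ridge-asympequi-in-c} (trace property of asymptotic equivalence) and combining with the elementary identity $\tr[(\tfrac1n\bX^\ctransp\bX - z\bI_p)^{-1}] = \tr[(\tfrac1n\bX\bX^\ctransp - z\bI_n)^{-1}] - (p - n)/z$ (the nonzero eigenvalues of $\bX^\ctransp\bX$ and $\bX\bX^\ctransp$ coincide) gives $v(z) \asympequi -c(z)/z$, whence $-1/v(z) \asympequi z/c(z) = \zeta$. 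For $z_0 \asympequi \lambdaminnz(\tfrac1n\bX^\ctransp\bX)$: in the limit \cref{eq:basic-ridge-fp-in-r} is precisely Silverstein's equation for the companion transform under $\underline{m} = -1/\zeta$, so by the Silverstein--Choi description of the edges of the limiting spectral distribution as the $H$-images of the critical points of $H$, the value $z_0 = H(\zeta_0)$ is the left edge of the nonzero part of the support; invoking the no-eigenvalues-outside-the-support / exact-separation results along subsequences on which $p/n$ and the spectral distribution of $\bSigma$ converge, and patching them, gives $\lambdaminnz(\tfrac1n\bX^\ctransp\bX) \to z_0$ in the stated sense.
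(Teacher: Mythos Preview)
Your core strategy matches the paper: the reparameterization $\zeta = z/c(z)$, the identification of $\zeta_0$ as the unique critical point of $H(\zeta) := \zeta\bigl(1 - \tfrac{1}{n}\tr[\bSigma(\bSigma-\zeta\bI_p)^{-1}]\bigr)$ on $(-\infty,\lambdaminnz(\bSigma))$, and the derivation of $-c(z)/z \asympequi v(z)$ via the Stieltjes/companion relation. Your analysis of $H$ via $H'=1-\phi$ is exactly the paper's linear-versus-concave intersection argument in different notation.

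There is, however, a genuine gap in your branch selection. Since $H$ is concave on $(-\infty,\lambdaminnz(\bSigma))$ with maximum $z_0$ at $\zeta_0$ and $H(\zeta)\to -\infty$ at both endpoints, the equation $H(\zeta)=z$ has \emph{two} real solutions for each $z<z_0$: one in $(-\infty,\zeta_0)$ and one in $(\zeta_0,\lambdaminnz(\bSigma))$. Your implicit function theorem gives an analytic branch near each, so ``agrees with the $\CC^-$-solution by continuity as $\Im(z)\searrow 0$'' does not by itself select the left one. The paper pins this down by an explicit computation: for $z=x+i\varepsilon$ with $x$ very negative, it bounds the real and imaginary parts of $\zeta$ to show the $\CC^+$-solution has $\Re(\zeta)<x/2$ and $\Im(\zeta)>0$, forcing $\zeta\to -\infty$ as $x\to -\infty$, hence the left branch. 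You could shortcut this via the relation $\zeta=-1/v_\infty(z)$ (since $v_\infty(z)\to 0^+$ as $z\to -\infty$), but your proposal does not close the loop.

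For $z_0 \asympequi \lambdaminnz(\tfrac{1}{n}\bX^\ctransp\bX)$ you invoke Silverstein--Choi edge description plus Bai--Silverstein separation along subsequences where the spectrum of $\bSigma$ converges. This can be made to work, but note the paper deliberately avoids any convergence assumption on $\bSigma$, so the subsequence patching is not cosmetic. The paper instead gives a self-contained argument: the zeros of the empirical $v(z)$ interlace its poles (the eigenvalues of $\tfrac{1}{n}\bX\bX^\ctransp$), so the leftmost zero is squeezed between the two smallest distinct eigenvalues, which coalesce in the limit; a short contradiction argument using the boundedness of $\zeta$ by $\zeta_0$ then forces this common limit to equal the $z_0$ defined by the fixed-point system. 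Your route trades this for heavier external machinery.
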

\begin{proof}[Proof sketch]
To prove this corollary, we define $\zeta \defeq \tfrac{z}{c(z)}$ to obtain \cref{eq:basic-ridge-asympequi-in-r} from \cref{eq:basic-ridge-asympequi-in-c} for $z \in \complexset^+$, and also observe that \editedinlinetwo{$-\tfrac{1}{\zeta}$} is the limiting companion Stieltjes transform $v(z)$ of $\tfrac{1}{n} \mX \mX^\ctransp$ at $z$. This implies that $\zeta \in \complexset^+$ and that the mapping $z \mapsto \zeta$ is a holomorphic function on its domain, which includes all real $z < \liminf \lambdaminnz(\tfrac{1}{n} \mX \mX^\ctransp)$. We then identify the analytic continuation of the mapping $z \mapsto \zeta$ to the reals, which consists of careful bookkeeping to determine $z_0$, the least positive value of $z$ for which $\zeta$ does not exist, which must be asymptotically equal to $\lambdaminnz(\tfrac{1}{n} \mX \mX^\ctransp)$. 
The proof details can be found in \Cref{sec:proof:cor:basic-ridge-asympequi-in-r} 
of the supplementary material.
\end{proof}

\begin{remark}
    [The case of $z = 0$]
    The form of the equivalence \cref{eq:basic-ridge-asympequi-in-c}
    is slightly different as compared with \cref{eq:basic-ridge-asympequi-in-r}
    in that the resolvent $(\tfrac{1}{n} \bX^\ctransp \bX - z \bI_p)^{-1}$ has a normalizing multiplier of $z$ in the latter case.
    This enables continuity of the left-hand side at $z = 0$, in contrast to
    specializing the equivalence \cref{eq:basic-ridge-asympequi-in-c}
    to real $z$, where both the left- and right-hand sides may diverge as $z \nearrow 0$.
\end{remark}

Our main result in the next section for sketching follows directly from this theorem and shares a very similar form. For this reason, we defer discussion about the interpretation of the solutions to the above equations for our reformulation under the sketching setting; however, analogous interpretations will apply to the above theorem.

\section{Main results}
\label{sec:main_results}

One way to think about \cref{cor:basic-ridge-asympequi-in-r}
is that the data matrix $\bX = \bZ \mSigma^{1/2}$ is a sketched version of the (square root) covariance matrix $\mSigma^{1/2}$,
where $\bZ$ acts as a sketching matrix. The sketching is done by ``nature'' in the form of the $n$ observations,
rather than by the statistician, but is otherwise mathematically identical to sketching. Using this insight, along with the Woodbury identity,
we can adapt the random matrix resolvent equivalence in \cref{cor:basic-ridge-asympequi-in-r} to a sketched (regularized) pseudoinverse equivalence. To emphasize the shift in perspective, we denote the dimensionality of the sketched data as $q$ (replacing $n$), replace $\mSigma$ with $\mA$, and absorb the normalization by $\tfrac{1}{q}$ (replacing $\tfrac{1}{n}$) into the sketching matrix $\mS$ (replacing $\mZ$), so that the sketching transformation is norm-preserving (see \Cref{rem:norm-preserving-sketch} for more details).

\subsection{First-order equivalence}

Our first result provides a first-order equivalence for the sketched regularized pseudoinverse.
By first-order equivalence,
we refer to equivalence for matrices that involve 
the \emph{first} power of the ridge resolvent.
We also present a second-order equivalence
for matrices that involve the \emph{second} power
of the ridge resolvent in \Cref{sec:second-order-sketch-equi}.

In preparation for the statements to follow,
recall that $r(\bA) = \frac{1}{p} \sum_{i=1}^{p} \1\{ \lambda_i(\bA) > 0 \}$,
or in other words, the normalized number of non-zero eigenvalues of $\bA$.
Note that $0 \le r(\bA) \le 1$.

\begin{theorem}
    [Isotropic sketching equivalence]
    \label{thm:sketched-pseudoinverse}
    Let $\mA \in \complexset^{p \times p}$ be a positive semidefinite
    matrix such that $\| \mA \|_{\op}$ is uniformly bounded in $p$
    and $\liminf \lambdaminnz(\bA) > 0$.
    Let $\sqrt{q}\mS \in \complexset^{p \times q}$ be a random matrix
    consisting of i.i.d.\ random variables that have mean 0, variance 1, and finite $8 + \delta$ moment for some $\delta > 0$. 
    Let $\lambda_0, \mu_0 \in \reals$ be the unique solutions, satisfying $\mu_0 > - \lambdaminnz(\mA)$, to the system of equations
    \begin{align}
        \label{eq:mu0-lambda0-fps}
        1 = \tfrac{1}{q} \tr \bracket{\mA^2 \paren{\mA + \mu_0 \mI_p}^{-2}}, \quad
        \lambda_0 = \mu_0 \paren{1 - \tfrac{1}{q} \tr \bracket{\mA \inv{\mA + \mu_0 \bI_p}}}.
    \end{align}
    Then, 
    as $q, p \nearrow \infty$ 
    such that
    $0 < \liminf \tfrac{q}{p} \le \limsup \tfrac{q}{p} < \infty$, 
    the following asymptotic equivalences hold:
    \begin{enumerate}[topsep=1em,parsep=0pt,label=(\roman*)]
        \item for any $\lambda > \limsup \lambda_0$, 
        we have
    \end{enumerate}
    \vspace{-\abovedisplayskip}
    \begin{align}
        \label{eq:thm:sketched-pseudoinverse-A-half}
        \mA^{1/2} \mS \big( \mS^\ctransp \mA \mS + \lambda \mI_q
        \big)^{-1} \mS^\ctransp
        \asympequi \mA^{1/2} \inv{\mA + \mu \mI_p};
    \end{align}
    \begin{enumerate}[resume*]
        \item if furthermore either $\lambda \neq 0$ or $\limsup \tfrac{q}{p} < \liminf r(\mA)$,
        we have
    \end{enumerate}
    \vspace{-\abovedisplayskip}
    \begin{align}
        \label{eq:thm:sketched-pseudoinverse}
        \mS \big( \mS^\ctransp \mA \mS + \lambda \mI_q
        \big)^{-1} \mS^\ctransp
        \asympequi \inv{\mA + \mu \mI_p},
    \end{align}
    where $\mu$
    is the unique solution in $(\mu_0, \infty)$ to the fixed point equation
    \begin{align}
        \label{eq:sketched-modified-lambda}
        \lambda = \mu \paren{ 1 - \tfrac{1}{q} \tr \bracket{\mA \inv{\mA + \mu \mI_p}}}.
    \end{align}
    Furthermore, as $p, q \to \infty$,
    \editedinline{$\mu \asympequi \tfrac{1}{ \widetilde{v}(\lambda)}$,}
    where
    \begin{align}
        \widetilde{v}(\lambda) = \tfrac{1}{q} \tr \bracket{\big( \mS^\ctransp \mA \mS + \lambda \mI_q
        \big)^{-1} },
    \end{align}
    and \editedinline{$\lambda_0 \asympequi - \lambdaminnz(\mS^\ctransp \mA \mS)$}. 
\end{theorem}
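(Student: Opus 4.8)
The plan is to derive the result from \Cref{cor:basic-ridge-asympequi-in-r} by a change of variables and the push-through identity, and to handle the rank-deficient case by perturbing $\mA$. First I would set $z \defeq -\lambda$ and $\mZ \defeq (\sqrt{q}\,\mS)^\ctransp \in \complexset^{q \times p}$, so that $\mZ$ has i.i.d.\ entries with mean $0$, variance $1$, and finite $8+\delta$ moment, $\mS = \tfrac{1}{\sqrt{q}}\mZ^\ctransp$, and, with $\mX \defeq \mZ\mA^{1/2}$, one has $\mS^\ctransp \mA \mS + \lambda \mI_q = \tfrac{1}{q}\mX\mX^\ctransp - z\mI_q$. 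Applying \Cref{cor:basic-ridge-asympequi-in-r} with $n \leftarrow q$, $\mSigma \leftarrow \mA$, and this $\mX$, and writing $\mu \defeq -\zeta$, $\mu_0 \defeq -\zeta_0$, $\lambda_0 \defeq -z_0$, the constraint $\zeta_0 < \lambdaminnz(\mSigma)$, the system \cref{eq:basic-ridge-asympequi-in-r:bounds}, the fixed-point equation \cref{eq:basic-ridge-fp-in-r}, and the admissible range $z < \liminf z_0$ translate respectively into $\mu_0 > -\lambdaminnz(\mA)$, the system \cref{eq:mu0-lambda0-fps}, the fixed-point equation \cref{eq:sketched-modified-lambda} (with $\mu \in (\mu_0, \infty)$, since $\zeta \in (-\infty, \zeta_0)$), and $\lambda > \limsup \lambda_0$, while \cref{eq:basic-ridge-asympequi-in-r} becomes $\lambda\inv{\tfrac{1}{q}\mX^\ctransp\mX + \lambda\mI_p} \asympequi \mu\inv{\mA + \mu\mI_p}$. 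Since $\tfrac{1}{q}\mX^\ctransp\mX$ and $\tfrac{1}{q}\mX\mX^\ctransp = \mS^\ctransp\mA\mS$ share the same nonzero eigenvalues, $z_0 \asympequi \lambdaminnz(\tfrac{1}{q}\mX^\ctransp\mX)$ gives $\lambda_0 \asympequi -\lambdaminnz(\mS^\ctransp\mA\mS)$, and since $\widetilde{v}(\lambda) = \tfrac{1}{q}\tr\bracket{\inv{\tfrac{1}{q}\mX\mX^\ctransp - z\mI_q}} = v(z)$, the relation $\zeta \asympequi -1/v(z)$ gives $\mu \asympequi 1/\widetilde{v}(\lambda)$.

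Next, for the nondegenerate case $\mA \succ 0$, I would use $\mZ = \mX\mA^{-1/2}$ (with $\|\mA^{-1/2}\|_{\op}$ bounded) together with the push-through identity and the identity $\inv{\bM + \lambda\mI}\bM = \mI - \lambda\inv{\bM+\lambda\mI}$ to rewrite $\mS\inv{\mS^\ctransp\mA\mS + \lambda\mI_q}\mS^\ctransp = \mA^{-1/2}\bigl(\mI_p - \lambda\inv{\tfrac{1}{q}\mX^\ctransp\mX + \lambda\mI_p}\bigr)\mA^{-1/2}$. Substituting the equivalence for $\lambda\inv{\tfrac{1}{q}\mX^\ctransp\mX + \lambda\mI_p}$ from the reduction and invoking the sum and product rules of asymptotic equivalence with the bounded deterministic factors $\mA^{-1/2}$, the right-hand side is asymptotically equivalent to $\mA^{-1} - \mu\mA^{-1}\inv{\mA + \mu\mI_p} = \inv{\mA + \mu\mI_p}$, which is \cref{eq:thm:sketched-pseudoinverse}; left-multiplying by the bounded deterministic matrix $\mA^{1/2}$ gives \cref{eq:thm:sketched-pseudoinverse-A-half}.

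For general (possibly rank-deficient) $\mA$, I would run a perturbation argument: for $t > 0$ set $\mA_t \defeq \mA + t\mI_p \succ 0$ and let $\mu_t$ be the corresponding solution of \cref{eq:sketched-modified-lambda}. Under the stated hypotheses $\mS^\ctransp\mA_t\mS + \lambda\mI_q$ is invertible for all small $t \ge 0$ with $\|\inv{\mS^\ctransp\mA_t\mS + \lambda\mI_q}\|_{\op}$ bounded uniformly in $t$ and in the dimension --- here one uses $\lambda > \limsup\lambda_0$, $\liminf\lambdaminnz(\mA) > 0$, and, when $\lambda = 0$, the rank condition $\limsup\tfrac{q}{p} < \liminf r(\mA)$ --- and $\|\mS\|_{\op}$ is bounded almost surely by the $8+\delta$ moment assumption. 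A first-order resolvent expansion then bounds the operator norm of the difference between the $\mA_t$- and $\mA$-versions of both $\mS\inv{\cdot}\mS^\ctransp$ and $\mA^{1/2}\mS\inv{\cdot}\mS^\ctransp$ by $O(t)$, uniformly in the dimension, while $\|\mA_t^{1/2}\inv{\mA_t + \mu_t\mI_p} - \mA^{1/2}\inv{\mA + \mu\mI_p}\|_{\op} = O(\sqrt{t})$ uniformly in the dimension, using $\|\mA_t^{1/2} - \mA^{1/2}\|_{\op} \le \sqrt{t}$ and $|\mu_t - \mu| = O(t)$ (the latter from strict monotonicity of the map in \cref{eq:sketched-modified-lambda}). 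Combining these with the equivalence already established for $\mA_t$, for every deterministic $\bTheta$ of uniformly bounded trace norm one obtains $\limsup \bigl|\tr\bracket{\bTheta\bigl(\mS\inv{\mS^\ctransp\mA\mS + \lambda\mI_q}\mS^\ctransp - \inv{\mA + \mu\mI_p}\bigr)}\bigr| = O(\sqrt{t})$, and letting $t \searrow 0$ yields \cref{eq:thm:sketched-pseudoinverse}; the same argument with $\mA^{1/2}$ prepended yields \cref{eq:thm:sketched-pseudoinverse-A-half}.

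I expect the main obstacle to be the rank-deficient case. The matrix $\mX = \mZ\mA^{1/2}$ records only the action of $\mS$ on the range of $\mA$, so the ``null-space'' randomness of $\mS$ is invisible to \Cref{cor:basic-ridge-asympequi-in-r} and $\mA^{1/2}$ cannot simply be inverted away; the perturbation $\mA \rightsquigarrow \mA + t\mI_p$ bypasses this, but its correctness rests on the operator-norm bounds above being uniform in the dimension (so that the limits $t \searrow 0$ and $n \nearrow \infty$ may be interchanged), on the continuity $\mu_t \to \mu$, and on a case analysis of the regimes in which $\mS^\ctransp\mA\mS + \lambda\mI_q$ fails to be positive definite. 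The remaining verifications, being of the routine calculus-of-equivalences type, I would defer to the supplementary material.
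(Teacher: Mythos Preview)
Your approach is essentially the paper's: reduce to \Cref{cor:basic-ridge-asympequi-in-r} with $\mX=\sqrt{q}\,\mS^\ctransp\mA^{1/2}$ via the push-through identity, handle invertible $\mA$ directly, and pass to rank-deficient $\mA$ by the perturbation $\mA_t=\mA+t\mI_p$ with a limit interchange. The paper's supplementary proof uses the Woodbury form and Moore--Osgood where you use a resolvent expansion and $O(t)$ bounds, but the idea is the same.

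There is, however, one case your $\mA$-perturbation does not cover: part~(i) at $\lambda=0$ when the rank condition fails (i.e.\ when $\limsup\tfrac{q}{p}>\liminf r(\mA)$). In that regime $\mS^\ctransp\mA\mS$ is singular, so the ``$\mA$-version'' of $\mA^{1/2}\mS(\mS^\ctransp\mA\mS+\lambda\mI_q)^{-1}\mS^\ctransp$ is not defined, and $\|(\mS^\ctransp\mA_t\mS)^{-1}\|_{\op}$ is of order $1/t$, so the difference you write as $O(t)$ is not uniformly controlled. Your sentence ``when $\lambda=0$, the rank condition'' already flags that the uniform inverse bound needs that hypothesis, so ``the same argument with $\mA^{1/2}$ prepended'' only delivers (i) under the hypotheses of (ii), not in full. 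The paper closes this gap by a different perturbation: having already established (ii) for $\lambda\neq 0$, it multiplies by $\mA^{1/2}$ to get (i) for $\lambda\neq 0$, observes that $\mA^{1/2}\mS(\mS^\ctransp\mA\mS+\lambda\mI_q)^{-1}$ and $\mA^{1/2}(\mA+\mu\mI_p)^{-1}$ remain uniformly bounded as $\lambda\to 0$ (the $\mA^{1/2}$ kills the blow-up on the null space), and invokes a normal-family/Montel argument in $\lambda$ to pass to $\lambda=0$. In short: for this corner case, perturb in $\lambda$, not in $\mA$.
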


\begin{proof}[Proof sketch]
We begin by considering the case that $\mA$ satisfies $\limsup \bignorm[\op]{\mA^{-1}} < \infty$. Then we can rewrite the left-hand side of \cref{eq:thm:sketched-pseudoinverse-A-half} or \cref{eq:thm:sketched-pseudoinverse}
such that we can apply \cref{cor:basic-ridge-asympequi-in-r} with $\mX = \sqrt{q} \mS^\ctransp \mA^{1/2}$, $\lambda = -z$, and $\mu = -\zeta$. For any $\lambda > -\liminf z_0$,
\begin{subequations}
\begin{align}
    \mA^{1/2} \mS \biginv{\mS^\ctransp \mA \mS + \lambda \mI_q } \mS^\ctransp \mA^{1/2} &= 
    \mA^{1/2} \mS \mS^\ctransp \mA^{1/2} \biginv{\mA^{1/2} \mS \mS^\ctransp \mA^{1/2} + \lambda \mI_p } \\
    & = \mI_p - \lambda \biginv{\mA^{1/2} \mS \mS^\ctransp \mA^{1/2} + \lambda \mI_p } \\
    & \asympequi \mI_p - \mu \biginv{\mA + \mu \mI_p} \\
    & = \mA^{1/2} \biginv{\mA + \mu \mI_p} \mA^{1/2}.
\end{align}
\end{subequations}
We can then multiply on the right, or both left and right, by $\mA^{-1/2}$ to obtain the results in \cref{eq:thm:sketched-pseudoinverse-A-half} and \cref{eq:thm:sketched-pseudoinverse}, respectively, by the product rule of asymptotic equivalences. If $\mA$ does not have a norm-bounded inverse, we can apply the above result for $\mA_\delta \defeq \mA + \delta \mI_p$ for $\delta > 0$ and make a uniform convergence argument for interchanging limits of $p$ and $\delta$ to prove the equivalence in \cref{eq:thm:sketched-pseudoinverse}. We then multiply by $\mA^{1/2}$ and make another uniform convergence argument to extend this equivalence to the case $\lambda = 0$ to obtain the equivalence in \cref{eq:thm:sketched-pseudoinverse-A-half}. The details can be found in \Cref{sec:proof:thm:sketched-pseudoinverse} of the supplementary material.
\end{proof}

In words, the sketched pseudoinverse of $\mA$ with regularization $\lambda$ is asymptotically equivalent to the regularized inverse of $\mA$ with regularization $\mu$, and the relationship between $\lambda$ and $\mu$ asymptotically depends only on $\mA$, $p$, and $q$. As mentioned in \Cref{sec:preliminaries}, this implies for example that the elements of the sketched pseudoinverse converge to the elements of the ridge-regularized inverse. We illustrate this in \cref{fig:empirical-concentration}, where for a diagonal $\mA$, the off-diagonals of the sketched pseudoinverse quickly converge to zero as $p$ increases, while the diagonals converge to the diagonals of the regularized inverse of $\mA$.

\begin{figure}[t]
    \centering
    \includegraphics[width=5in]{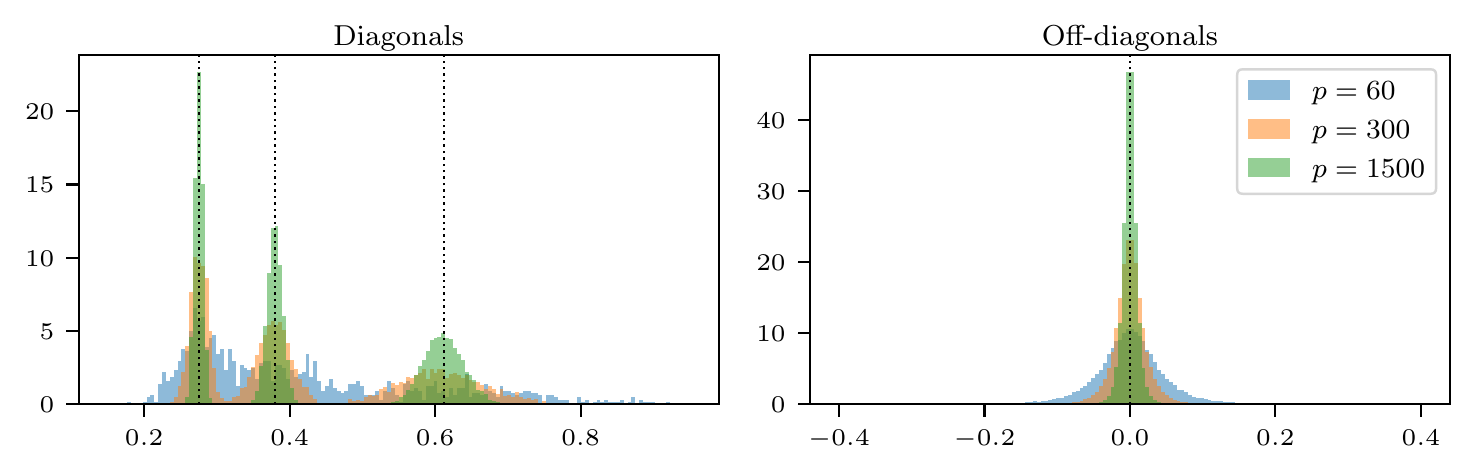}
    \caption{Empirical density histograms over 20 trials demonstrating the concentration of the elements of $\mS \inv{\mS^\transp \mA \mS + \lambda \mI} \mS^\transp$ for real Gaussian $\mS$ and diagonal $\mA$ taking values $\set{0, 1, 2}$ with equal frequency along the diagonal. We choose $\lambda = 1$ and $q = \floor{\alpha p}$ for $\alpha = 0.8$ over $p \in \set{60, 300, 1500}$.  As expected by \cref{thm:sketched-pseudoinverse}, the individual elements of the sketched pseudoinverse converge to those of $\inv{\mA + \mu \mI}$, where for this problem $\mu \approx 1.63$. Therefore, the diagonals concentrate with equal mass around $\set{1 / (a + \mu) : a \in \set{0, 1, 2}}$ (black, dotted), and the off-diagonals concentrate around 0.}
    \label{fig:empirical-concentration}
\end{figure}

Below we provide several remarks on the assumptions and implications
of \cref{thm:sketched-pseudoinverse}. It will be useful to interpret the equations in terms of the sketching aspect ratio $\alpha \defeq \tfrac{q}{p}$.

\begin{remark}
    [Normalization choice for the sketching matrix]
    \label{rem:norm-preserving-sketch}
    We remark that the normalization factor $\sqrt{q}$ in $\sqrt{q} \bS$
    of the sketching matrix is such that
    the norm of the rows of $\bS$ is $1$ in expectation.
    This is done so that 
    $\EE[ \| \bS^\ctransp \bx \|_2^2] = \| \bx \|_2^2$
    as $\EE[\bS \bS^\ctransp] = \bI_p$.
    One can alternately consider sketching matrices with normalization 
    $\sqrt{p} \bS$ such that the columns have norm $1$ in expectation.
    It is easy to write an equivalent version of \Cref{thm:sketched-pseudoinverse}
    with such a normalization. 
    We choose to focus on the former scaling
    because it is more common in practice.
\end{remark}

\begin{remark}
    [On assumptions]
    The assumptions imposed in \cref{thm:sketched-pseudoinverse}
    are quite mild.
    In particular,  the sequences of matrices $\bA$ 
    being sketched can be random, so long as they are independent of $\bS$.
    Furthermore, the spectrum of the sequences of matrices $\bA$
    need not converge to a fixed spectrum.
    The aspect ratio $\alpha$ of the sketching matrices $\bS$
    also need not converge to a fixed number.
    The reason this is possible is because
    we are not expressing the sketched resolvent
    in terms of the limiting spectrum of $\bS$ and $\bA$,
    but rather relating it through $\bA$ and a parameter $\mu$
    that depends on $\alpha$ and $\bA$ 
    (and the original regularization level $\lambda$),
    which allows us to keep our assumptions weak. 
\end{remark}

\begin{edited}
\begin{remark}[Rotationally invariant unregularized sketching]
\label{rem:rotationally-invariant}
When $\lambda = 0$, the first-order equivalence in fact holds for any sketching matrix $\mS$ that is rotationally invariant on the left and is not limited to i.i.d.\ sketching matrices. That is, if we look at the singular value decomposition of $\mS = \mU \mD \mV^\ctransp$, the left singular vectors $\mU$ are drawn from the Haar distribution over matrices with orthonormal columns. For $q \leq \mathrm{rank}(\mA)$, $\mS \biginv{\mS^\ctransp \mA \mS} \mS^\ctransp = \mU \biginv{\mU^\ctransp \mA \mU} \mU^\ctransp$, and so the sketched pseudoinverse does not depend on the spectrum of $\mS \mS^\ctransp$ at all and we can without any loss of generality apply \cref{thm:sketched-pseudoinverse}.
Given the universality of this result, it is no surprise that essentially all prior results for unregularized random projections \cite{derezinski2020precise,lejeune_javadi_baraniuk_2020,pmlr-v108-mutny20a}  agree even for sketches of varying spectra or determinantal point processes.
However, this universality does not extend to $\lambda \neq 0$ or to higher order equivalences; 
\editedinlinetwo{see \cref{thm:general-free-sketching}.}
\end{remark}

\begin{remark}[Proportionally sparse sketching]
\label{rem:sparse-sketching}
Although i.i.d.\ sketching is commonly referred to as ``dense sketching,'' \cref{thm:sketched-pseudoinverse} easily accommodates \editedinlinetwo{relatively} sparse sketches that are faster to apply. We can draw $[\mS]_{ij}$ from a distribution taking value $0$ with probability $1 - \tfrac{q}{p}$ and still satisfy the bounded $8 + \delta$ moment condition, leading to an $\mS$ with $O(q^2)$ nonzero elements with high probability. This means that a vector multiply $\mS^\ctransp \vu$ has cost $O(q^2)$ rather than $O(pq)$, which can be sufficient in many cases to make the cost of sketching negligible (see an example in \Cref{sec:sketch-and-project}). \editedinlinethree{This approach is essentially identical to the LESS-uniform embedding proposed by \cite{derezinski2021newtonless} as a special case, although LESS-uniform sketches can be ``truly sparse'' (less than $O(q^2)$) with additional incoherence assumptions on $\mA$.} 
\editedinlinetwo{It is worth remininding that since the ratio $\tfrac{q}{p}$ is bounded, strictly speaking all of these costs are $O(p^2)$; however, the relative advantages are often still computationally meaningful (see \Cref{fig:sketch-and-project}).}
Faster $O(p \log p)$ sketches are 
\editedinlinetwo{not covered by this theorem, but we expect most such sketches to be covered by our extension in \Cref{thm:general-free-sketching}.}
\end{remark}
\end{edited}

\begin{remark}
    [The case of $\lambda = 0$]
    \label{rem:lamda-zero-case}
    While the form in \cref{eq:thm:sketched-pseudoinverse} is the most general, it does not hold for $\lambda = 0$ if the sketch size is larger than the rank of $\mA$, since the inverse is unbounded. However, in machine learning settings such as ridge(less) regression, we only need to evaluate the regularized pseudoinverse $\mS (\mS^\ctransp \tfrac{1}{n} \mX^\ctransp \mX \mS + \lambda \mI_p)^{-1} \mS^\ctransp \tfrac{1}{\sqrt{n}} \mX$. Thus, we can apply the form in \cref{eq:thm:sketched-pseudoinverse-A-half} with $\mA^{1/2} = (\tfrac{1}{n} \mX^\ctransp \mX)^{1/2}$, which is sufficient for any downstream analysis.
\end{remark}

\begin{remark}
    [Alternate form of equivalence representation]
    Expressed in terms of $\widetilde{v}(\lambda)$,
    the equivalence \cref{eq:thm:sketched-pseudoinverse} becomes 
    \begin{equation}
        \bS \big( \bS^\ctransp \bA \bS + \lambda \bI_q \big)^{-1} \bS^\ctransp
        \asympequi
         \tv(\lambda) \inv{\tv(\lambda) \bA + \bI_p},
    \end{equation}
    and the fixed-point equation \cref{eq:sketched-modified-lambda} becomes
    \begin{equation}
        \lambda 
        = \frac{1}{\tv(\lambda)}
        - \tfrac{1}{q} \tr \bracket{\bA \inv{\tv(\lambda) \bA + \bI_p} }.
    \end{equation}
\end{remark}

\subsection{Second-order equivalence}
\label{sec:second-order-sketch-equi}

Although the equivalence in \cref{thm:sketched-pseudoinverse} holds for first order trace functionals, this equivalence does not hold for higher order functionals. To intuitively understand why, it is helpful to reason about the asymptotic equivalence similarly to an equivalence of expectation in classical random variables. That is, we may have two random variables $X, Y$ with $\expect{X} = \expect{Y}$, but this does not allow us to make any conclusions about the relationship between $\expect{X^k}$ and $\expect{Y^k}$ for $k > 1$. In the same way, our first-order asymptotic equivalence does not directly tell us higher order equivalences.

Fortunately, however, because of the resolvent structure of the regularized pseudoinverse, we can cleverly apply the derivative rule of the calculus of asymptotic equivalences to obtain a second order equivalence from the first order equivalence. Such a derivative trick has been employed in several prior works  \cite{dobriban_wager_2018, hastie_montanari_rosset_tibshirani_2022, karoui_kolsters_2011, ledoit_peche_2011, liu_dobriban_2019} \editedinlinetwo{for computing some specific second-order functionals, but we extend to generic second-order functionals}. This approach could in principle be repeated for higher order functionals.

\begin{theorem}
    [Second-order isotropic sketching equivalence]
    \label{thm:second-order-sketch}
    Consider the setting of \cref{thm:sketched-pseudoinverse}. If $\mPsi \in \complexset^{p \times p}$ is a deterministic or random positive semidefinite matrix independent of $\mS$ with $\norm[\op]{\mPsi}$ uniformly bounded in $p$, then if either $\lambda \neq 0$ or $\limsup \tfrac{q}{p} < \liminf r(\mA)$,
    \begin{align}
        \mS \biginv{\mS^\ctransp \mA \mS + \lambda \mI_q} \mS^\ctransp \mPsi \mS \biginv{\mS^\ctransp \mA \mS + \lambda \mI_q} \mS^\ctransp
        \asympequi
        \inv{\mA + \mu \mI_p} (\mPsi + \mu' \mI_p) \inv{\mA + \mu \mI_p},
    \end{align}
    where $\mu$ is as in \cref{thm:sketched-pseudoinverse}, and
    \begin{align}
        \label{eq:mu-prime}
        \mu' = \frac{\frac{1}{q} \tr \bracket{\mu^3 \inv{\mA + \mu \mI_p} \mPsi \inv{\mA + \mu \mI_p} }}{\lambda + \frac{1}{q} \tr \bracket{\mu^2 \mA \paren{\mA + \mu \mI_p}^{-2}} } \geq 0.
    \end{align}
\end{theorem}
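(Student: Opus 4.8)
The plan is to obtain the second-order equivalence by differentiating the first-order equivalence of \cref{thm:sketched-pseudoinverse} along a matrix-valued perturbation of $\mA$, using the heuristic that an asymptotic equivalence behaves like an equality of expectations that may be differentiated in an auxiliary parameter. As in the proof of \cref{thm:sketched-pseudoinverse}, I would first reduce to the case $\limsup \bignorm[\op]{\mA^{-1}} < \infty$, recovering the general positive semidefinite case at the end by applying the result to $\mA + \delta \mI_p$ and passing $\delta \searrow 0$ with the same uniform-convergence argument. In the reduced case, for real $t$ in a small interval around $0$ the matrix $\mA + t\mPsi$ is again positive semidefinite with uniformly bounded operator norm and inverse and $\liminf \lambdaminnz(\mA + t\mPsi) > 0$, and the threshold $\lambda_0$ from \cref{eq:mu0-lambda0-fps} depends continuously on $t$, so \cref{thm:sketched-pseudoinverse} applies. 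Writing $\mB_t \defeq \mS^\ctransp(\mA + t\mPsi)\mS + \lambda \mI_q$, $f(t) \defeq \mS \mB_t^{-1} \mS^\ctransp$, and $g(t) \defeq \inv{\mA + t\mPsi + \mu_t \mI_p}$, where $\mu_t$ solves \cref{eq:sketched-modified-lambda} with $\mA + t\mPsi$ in place of $\mA$ (so $\mu_t\big|_{t=0} = \mu$), it gives $f(t) \asympequi g(t)$ for real $t$ near $0$. Differentiating at $t = 0$ recovers the two sides of the theorem up to sign,
\[
  f'(0) = -\mS \mB^{-1} \mS^\ctransp \mPsi \mS \mB^{-1} \mS^\ctransp, \qquad g'(0) = -\inv{\mA + \mu \mI_p}\paren{\mPsi + \dot\mu\, \mI_p}\inv{\mA + \mu \mI_p},
\]
with $\mB \defeq \mS^\ctransp \mA \mS + \lambda \mI_q$ and $\dot\mu \defeq \tfrac{d}{dt}\mu_t\big|_{t=0}$, so the statement reduces to showing (a) $\dot\mu = \mu'$ and (b) that the equivalence $f \asympequi g$ may be differentiated at $t=0$.

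For (a), I would implicitly differentiate in $t$, at $t=0$, the equation $\lambda = \mu_t\paren{1 - \tfrac{1}{q}\tr[(\mA + t\mPsi)\inv{\mA + t\mPsi + \mu_t\mI_p}]}$ defining $\mu_t$. Using the $t=0$ identity $\tfrac{\lambda}{\mu} = 1 - \tfrac{1}{q}\tr[\mA\inv{\mA + \mu\mI_p}]$ and routine resolvent algebra to collect the $\dot\mu$ terms, one finds that $\dot\mu$ equals the right-hand side of \cref{eq:mu-prime}; in particular the denominator there equals $\mu\, \phi'(\mu)$ where $\phi(\mu) \defeq \mu\paren{1 - \tfrac{1}{q}\tr[\mA\inv{\mA + \mu\mI_p}]}$. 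Since $\phi$ is minimized over $(-\lambdaminnz(\mA),\infty)$ at $\mu_0$ with value $\lambda_0$, we have $\phi'(\mu) > 0$ on $(\mu_0,\infty)$, and since the numerator of \cref{eq:mu-prime} has the same sign as $\mu^3$, this also yields $\mu' \ge 0$.

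For (b), I would show that $f$ and $g$ extend holomorphically in $t$ to a complex disk $D$ about $0$: $f$ because $\mB_t$ stays invertible with bounded inverse there for small radius, and $g$ by the analytic implicit function theorem applied to the fixed-point equation (the needed nondegeneracy being $\phi'(\mu) \ne 0$ from step (a)) together with invertibility of $\mA + t\mPsi + \mu_t\mI_p$. Since $\mPsi$ is independent of $\mS$ and all the resolvents involved have operator norm uniformly bounded over $D$, for any deterministic sequence $\bTheta$ with $\norm[\tr]{\bTheta}$ uniformly bounded the quantity $|\tr[\bTheta(f(t) - g(t))]|$ is uniformly bounded over $t \in D$ and in $p$; together with $\tr[\bTheta(f(t) - g(t))] \asto 0$ for real $t$ in the interval — a set with an accumulation point — a pathwise normal-families (Vitali) argument upgrades this to almost sure locally uniform convergence to $0$ on $D$, so $f \asympequi g$ on $D$. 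The differentiation rule of the calculus of asymptotic equivalences then yields $f'(0) \asympequi g'(0)$, which is the claimed equivalence. Finally, undoing the reduction by applying this to $\mA + \delta\mI_p$ and letting $\delta \searrow 0$, with the uniform-convergence argument from the proof of \cref{thm:sketched-pseudoinverse}, extends the result to all positive semidefinite $\mA$ under the stated hypothesis. I expect the main obstacle to be part (b): making precise the uniform boundedness that licenses differentiating the asymptotic equivalence — tracking resolvent-norm bounds holding simultaneously in $p$, in $\delta$, and over the complex $t$-disk — whereas the perturbed first-order equivalence and the identity $\dot\mu = \mu'$ are comparatively routine.
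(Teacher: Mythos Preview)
Your proposal is correct and follows essentially the same route as the paper: perturb $\mA \mapsto \mA + t\mPsi$, apply the first-order equivalence of \cref{thm:sketched-pseudoinverse}, differentiate at $t=0$, and compute $\dot\mu$ by implicitly differentiating \cref{eq:sketched-modified-lambda}. The paper's version is slightly more streamlined in two respects: it does \emph{not} first reduce to invertible $\mA$ (the required uniform resolvent bounds on $\biginv{\mS^\ctransp\mA\mS+\lambda\mI_q}$ and $\inv{\mA+\mu\mI_p}$ follow directly from the hypothesis ``$\lambda\neq 0$ or $\limsup\tfrac{q}{p}<\liminf r(\mA)$'' via the supplementary proof of \cref{thm:sketched-pseudoinverse}, so the $\delta$-perturbation is unnecessary here), and it invokes the differentiation rule of asymptotic equivalences from \Cref{sec:preliminaries} as a black box rather than spelling out the Vitali/normal-families justification you give in part~(b). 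Your more explicit treatment of (b) is a reasonable unpacking of what that rule encapsulates; the non-negativity argument via $\mu\phi'(\mu)$ is exactly what the paper defers to \cref{rem:alt-mu-prime}.
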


\begin{proof}[Proof]
By assumption, there exists $M < \infty$ such that $M > \limsup \bignorm[\op]{\biginv{\mS^\ctransp \mA \mS + \lambda \mI_q}}$ and $M > \limsup \bignorm[\op]{\inv{\mA + \mu \mI_p}}$ almost surely (see proof details for \cref{thm:sketched-pseudoinverse} in the supplementary material). Define $\mB_z \defeq \mA + z \mPsi$. Then for all $z \in D$, where
\begin{align}
    D = \bigset{z \in \complexset \colon \limsup \bigparen{|z| M \norm[\op]{\mPsi} \max \bigset{\norm[\op]{\mS}^2, 1}} < \tfrac{1}{2}},
\end{align} we have that $\max \bigset{\limsup \bignorm[\op]{\biginv{\mS^\ctransp \mB_z \mS + \lambda \mI_q}}, \limsup \bignorm[\op]{\inv{\mB_z + \mu \mI_p}}}
\leq 2 M$. Therefore, we can apply the differentiation rule of asymptotic equivalences for all $z \in D$:
\begin{subequations}
\begin{align}
    -\mS \biginv{\mS^\ctransp \mB_z \mS &+ \lambda \mI_q} \mS^\ctransp \mPsi \mS \biginv{\mS^\ctransp \mB_z \mS + \lambda \mI_q} \mS^\ctransp
    = \tfrac{\partial}{\partial z} \mS \biginv{\mS^\ctransp \mB_z \mS + \lambda \mI_q} \mS^\ctransp \\
    &\asympequi \tfrac{\partial}{\partial z} \biginv{\mB_z + \mu(z) \mI_p} \\
    &= - \biginv{\mB_z + \mu(z) \mI_p} \paren{\mPsi + \tfrac{\partial}{\partial z} \mu(z) \mI_p} \biginv{\mB_z + \mu(z) \mI_p}.
\end{align}
\end{subequations}
We let $\mu'(z) = \tfrac{\partial}{\partial z} \mu(z)$, and then we can divide \cref{eq:sketched-modified-lambda} by $\mu(z)$ and differentiate to obtain
\begin{align}
    \frac{\lambda \mu'(z)}{\mu(z)^2} = \tfrac{1}{q} \tr \bracket{\mPsi \inv{\mB_z + \mu(z) \mI_p} - \mB_z \inv{\mB_z + \mu(z) \mI_p} \paren{\mPsi + \mu'(z) \mI_p } \inv{\mB_z + \mu(z) \mI_p} }.
    \nonumber
\end{align}
Solving for $\mu'(0)$ gives the expression in $\cref{eq:mu-prime}$. For the non-negativity of $\mu'$, see \cref{rem:alt-mu-prime} and its proof.
\end{proof}

\begin{figure}[t]
    \centering
    \includegraphics[width=5in]{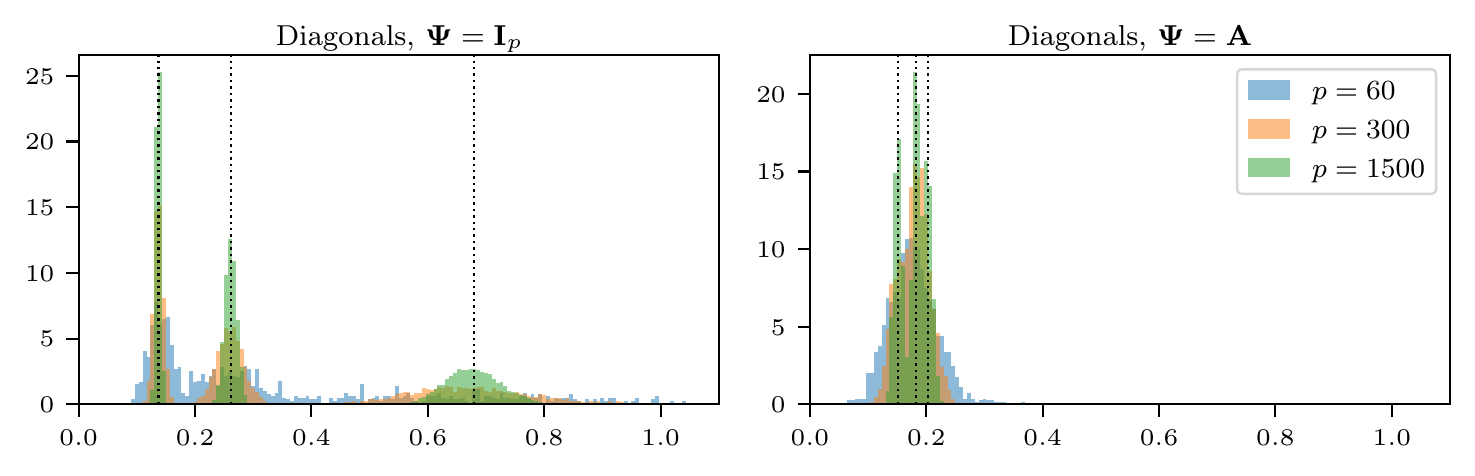}
    \caption{
    Empirical density histograms over 20 trials demonstrating the concentration of diagonal elements of  $\mS \inv{\mS^\transp \mA \mS + \lambda \mI} \mS^\transp \mPsi \mS \inv{\mS^\transp \mA \mS + \lambda \mI} \mS^\transp$ for $(\mS, \mA, \lambda)$ as in \cref{fig:empirical-concentration} and $\mPsi \in \set{\mI_p, \mA}$. As expected by \cref{thm:second-order-sketch}, the individual elements of the sketched pseudoinverse converge to those of $\inv{\mA + \mu \mI}(\mPsi + \mu' \mI) \inv{\mA + \mu \mI}$ (black, dotted), where $\mu' \approx 0.813$ and $0.403$ for $\mPsi = \mI_p$ and $\mA$, respectively.}
    \label{fig:empirical-concentration-second}
\end{figure}

That is, the second-order equivalence is the same as plugging in the first-order equivalence and then adding a non-negative inflation $\mu' \inv[2]{\mA + \mu \mI}$. The inflation factor $\mu'$ depends linearly on the matrix $\mPsi$, but the inflation is always isotropic, rather than in the direction of $\mPsi$. It is non-negative in the same way that the variance of an estimator is also non-negative. Examples of quadratic forms where this second-order equivalence can be used include estimation error ($\mPsi = \mI$) and prediction error ($\mPsi = \mSigma$, the population covariance) in ridge regression problems. We give a demonstration of the concentration in \cref{fig:empirical-concentration-second}.
While typically $\mu' > 0$, it can go to 0 in the special case of $\mu = 0$ and $\mPsi$ sharing a subspace with $\mA$, as we discuss in \cref{rem:mu-prime-to-0}. 

\begin{remark}
    [The case of $\lambda = 0$]
    Similar to the variant form in \cref{eq:thm:sketched-pseudoinverse-A-half} of \cref{thm:sketched-pseudoinverse}, if we consider the slightly different form
    \begin{align}
        \mA^{1/2}
        \mS \biginv{\mS^\ctransp \mA \mS + \lambda \mI_q} \mS^\ctransp \mPsi &\mS \biginv{\mS^\ctransp \mA \mS + \lambda \mI_q} \mS^\ctransp
        \mA^{1/2} \\
        &\asympequi
        \mA^{1/2}
        \inv{\mA + \mu \mI_p} (\mPsi + \mu' \mI_p) \inv{\mA + \mu \mI_p}
        \mA^{1/2}
    \end{align}
    for the second-order resolvent,
    we do not need the $\lambda \neq 0$ or $\limsup \tfrac{q}{p} < \liminf r(\mA)$ restriction as stated in the theorem. Because the proof of this case is entirely analogous to the results in \cref{thm:sketched-pseudoinverse,thm:second-order-sketch}, we omit the proof.
\end{remark}

\section{Properties and examples}
\label{sec:properties}

Below we provide various analytical properties
of the quantities that appear in \Cref{thm:sketched-pseudoinverse,thm:second-order-sketch}.
See \Cref{sec:proofs-properties} in the supplementary material for their proofs.

\subsection{Lower limits}

The quantities $\lambda_0$ and $\mu_0$
provide the lower limits of regularization
in \Cref{thm:sketched-pseudoinverse}.
The following two remarks describe their behaviour
in terms of $\alpha$.

\begin{remark}
    [Dependence of $\mu_0$ and $\lambda_0$ on $\alpha$]
    \label{rem:mu0-lambda0-vs-alpha}
    Writing the first equation in \cref{eq:mu0-lambda0-fps} as
    \begin{equation+}
        \label{eq:mu0-fp-in-alpha}
        \alpha = \tfrac{1}{p} \tr \bracket{\mA^2 \paren{\mA + \mu_0 \mI_p}^{-2}},
    \end{equation+}
    note that for fixed $\mA$, $\mu_0$ only depends on $\alpha$.
    Furthermore, the equation indeed admits a unique solution for $\mu_0$ for a given $\alpha$.
    This can be seen by noting that the function
    $f: \mu_0 \mapsto \tfrac{1}{p} \tr[\bA^2 (\bA + \mu_0 \bI_p)^{-2}]$
    is monotonically decreasing in $\mu_0$,
    and 
    \[
        \tfrac{1}{p} \lim_{\mu_0 \searrow - \lambdaminnz(\mA)} \tr[\bA^2 (\bA + \mu_0 \bI_p)^{-2}] = \infty,
        \quad
        \text{and}
        \quad
        \tfrac{1}{p}\lim_{\mu_0 \nearrow \infty} \tr[\bA^2 (\bA + \mu_0 \bI_p)^{-2}] = 0.
    \]
    In addition,
    because $\mu_0(\alpha) = f^{-1}(\alpha)$,
    $\mu_0$
    is monotonically decreasing in $\alpha$,
    and $\lim_{\alpha \searrow 0} \mu_0(\alpha) = \infty$
    and $\lim_{\alpha \nearrow \infty} \mu_0(\alpha) = - \lambdaminnz(\bA)$.
    
    Given $\mu_0$,
    the second equation in \cref{eq:mu0-lambda0-fps} then
    provides $\lambda_0$ as
    \begin{equation+}
        \label{eq:lambda0-mu0-in-alpha}
        \lambda_0 
        = \mu_0 \paren{1 - \tfrac{1}{\alpha} \tfrac{1}{p} \tr \bracket{\mA \inv{\mA + \mu_0 \bI}}}.
    \end{equation+}
    For $\alpha \in (0, r(\bA))$,
    $\lambda_0 : \alpha \mapsto \lambda_0(\alpha)$
    is monotonically increasing,
    and $\lim_{\alpha \searrow 0} \lambda_0(\alpha) = - \infty$
    and $\lim_{\alpha \to r(\bA)} \lambda_0(\alpha) = 0$.
    When $\alpha = r(\bA)$, $\mu_0 = 0$
    and consequently $\lambda_0 = 0$.
    Finally, for $\alpha \in (r(\bA), \infty)$,
    $\lambda_0 : \alpha \mapsto \lambda_0(\alpha)$
    is monotonically decreasing in $\alpha$,
    and $\lim_{\alpha \nearrow \infty} \lambda_0(\alpha) = -\lambdaminnz(\bA)$.
    This follows from a short limiting calculation.
\end{remark}

\begin{remark}
    [Joint sign patterns of $\mu_0$ and $\lambda_0$]
    \label{rem:mu0-lambda0-signs}
    Observe from \cref{eq:mu0-lambda0-fps}
    the sign pattern summarized in \cref{tab:mu0-lambda0-signs}.
    \begin{table}[h!]
    \label{tab:mu0-lambda0-signs}
    \centering
    \caption{Sign patterns of $\lambda_0$ and $\mu_0$.}
    \begin{tabular}{c | c | c | c}
    $\alpha$ vs. $r(\bA)$ 
    & $\mu_0$ 
    & $\alpha$ vs. $\tfrac{1}{p} \tr[\bA (\bA + \mu_0 \bI)^{-1}]$
    & $\lambda_0$ \\
    \hline
    $\alpha > r(\bA)$ 
    & $< 0$
    & $\alpha = \tfrac{1}{p} \tr[\bA^2 (\bA + \mu_0 \bI)^{-2}]
    > \tfrac{1}{p} \tr[\bA (\bA + \mu_0 \bI)^{-1}]$ 
    & $< 0$ \\
    $\alpha = r(\bA)$ 
    & 0
    & $\alpha 
    = \lim_{x \searrow 0}
    \tfrac{1}{p} \tr[\bA^2 (\bA + x \bI)^{-2}]
    = \lim_{x \searrow 0}
    \tfrac{1}{p} \tr[\bA (\bA + x \bI)^{-1}]$
    & 0 \\
    $\alpha < r(\bA)$ 
    & $> 0$ 
    & $\alpha =  \tfrac{1}{p} \tr[\bA^2 (\bA + \mu_0 \bI)^{-2}]
    < \tfrac{1}{p}  \tr[\bA (\bA + \mu_0 \bI)^{-1}]$ 
    & $< 0$
    \end{tabular}
    \end{table}
\end{remark}

\subsection{First-order equivalence}

In general,
the exact $\mu$ depends on $\lambda$, $\alpha$, and $\bA$
via the fixed-point equation \cref{eq:sketched-modified-lambda}.
However, we can infer several properties of the behaviour
of $\mu$ as a function of $\lambda$ and $\alpha$
as summarized below.

\begin{proposition}
    [Monotonicities of $\mu$ in $\lambda$ and $\alpha$]
    \label{prop:monotonicities-lambda-alpha}
    For a fixed $\alpha \ge 0$, 
    the map $\lambda \mapsto \mu(\lambda)$,
    where $\mu(\lambda)$ is as defined in \cref{eq:sketched-modified-lambda}
    is monotonically increasing in $\lambda$
    over $(\lambda_0, \infty)$,
    and $\lim_{\lambda \searrow \lambda_0} \mu(\lambda) = \mu_0$, 
    while $\lim_{\lambda \nearrow \infty} \mu(\lambda) = \infty$.
    For a fixed $\lambda \ge 0$,
    the map $\alpha \mapsto \mu(\alpha)$
    where $\mu(\alpha)$ is as defined in \cref{eq:sketched-modified-lambda}
    is monotonically decreasing in $\alpha$ over $(0, \infty)$;
    when $\lambda < 0$,
    the map $\alpha \to \mu(\alpha)$ is monotonically decreasing
    over $(0, r(\bA))$ 
    and monotonically increasing over $(r(\bA), \infty)$.
    Furthermore,
    for any $\lambda \in (\lambda_0, \infty)$,
    $\lim_{\alpha \searrow 0} \mu(\alpha) = \infty$,
    and $\lim_{\alpha \nearrow \infty} \mu(\alpha) = \lambda$.
\end{proposition}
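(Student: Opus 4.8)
The plan is to strip out all matrix content and reduce the statement to one–dimensional calculus on the scalar function $h(\mu) \defeq \mu\paren{1 - \tfrac{1}{\alpha} s_1(\mu)}$, where $\alpha \defeq \tfrac{q}{p}$ and $s_1(\mu) \defeq \tfrac{1}{p}\tr[\mA\inv{\mA + \mu\mI_p}]$, since the fixed-point equation \cref{eq:sketched-modified-lambda} is exactly $\lambda = h(\mu)$ and $\mu(\lambda) = h^{-1}(\lambda)$. It is convenient to also introduce $s_2(\mu) \defeq \tfrac{1}{p}\tr[\mA^2(\mA + \mu\mI_p)^{-2}]$; both $s_1$ and $s_2$ are strictly decreasing and strictly positive on $(-\lambdaminnz(\mA), \infty)$ (using $\mA\neq0$), tend to $0$ as $\mu\to\infty$ and to $+\infty$ as $\mu\searrow -\lambdaminnz(\mA)$, and satisfy $s_1(0) = s_2(0) = r(\mA)$; the defining relation \cref{eq:mu0-fp-in-alpha} for $\mu_0$ reads $s_2(\mu_0) = \alpha$.

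For the monotonicity in $\lambda$, I would differentiate $h$ and use the elementary identity $\tr[\mA\inv{\mA+\mu\mI_p}] - \mu\tr[\mA(\mA+\mu\mI_p)^{-2}] = \tr[\mA^2(\mA+\mu\mI_p)^{-2}]$ (which comes from $\mA + \mu\mI_p - \mu\mI_p = \mA$) to obtain the clean formula $h'(\mu) = 1 - s_2(\mu)/\alpha$. Since $s_2$ is strictly decreasing with $s_2(\mu_0) = \alpha$, this is strictly positive for every $\mu \in (\mu_0,\infty)$. Together with $h(\mu_0) = \lambda_0$ (this is just the second equation of \cref{eq:mu0-lambda0-fps}) and $h(\mu)\to\infty$ as $\mu\to\infty$, this shows $h$ is a strictly increasing bijection from $(\mu_0,\infty)$ onto $(\lambda_0,\infty)$, so $\mu(\lambda) = h^{-1}(\lambda)$ is strictly increasing with $\mu(\lambda)\searrow\mu_0$ as $\lambda\searrow\lambda_0$ and $\mu(\lambda)\to\infty$ as $\lambda\to\infty$.

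For the monotonicity in $\alpha$, I would implicitly differentiate $\lambda = \mu - \mu s_1(\mu)/\alpha$ in $\alpha$, using $\partial_\mu h = 1 - s_2(\mu)/\alpha > 0$ (positive because the fixed point always lies in $(\mu_0(\alpha),\infty)$) and $\partial_\alpha h = \mu s_1(\mu)/\alpha^2$, to get
\[
  \frac{d\mu}{d\alpha} = -\frac{\mu\, s_1(\mu)}{\alpha^2\paren{1 - s_2(\mu)/\alpha}},
\]
whose sign is the opposite of the sign of $\mu(\alpha)$ (the denominator is positive and $s_1 > 0$). It then remains to pin down the sign of $\mu$ on each regime, for which I would use the sign data for $\mu_0$ and $\lambda_0$ already recorded in \cref{rem:mu0-lambda0-vs-alpha} ($\mu_0 > 0 > \lambda_0$ for $\alpha < r(\mA)$; $\mu_0 = \lambda_0 = 0$ for $\alpha = r(\mA)$; $\mu_0,\lambda_0 \in (-\lambdaminnz(\mA),0)$ for $\alpha > r(\mA)$) together with $h(0;\alpha) = 0$. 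When $\lambda\ge0$: if $\alpha < r(\mA)$ then $\mu > \mu_0 > 0$, and if $\alpha\ge r(\mA)$ then $h(0) = 0 \le \lambda$ forces $\mu\ge0$; hence $\mu\ge0$ throughout and the map is decreasing on $(0,\infty)$. When $\lambda<0$: for $\alpha\in(0,r(\mA))$ the fixed point lies between $\mu_0>0$ and the root $\mu_*>0$ of $s_1(\mu_*)=\alpha$ (the point where $h=0$), so $\mu>0$ and the map decreases; for $\alpha\in(r(\mA),\infty)$ one has $\lambda_0 < \lambda < 0$ with $h(\mu_0)=\lambda_0$ and $h(0)=0$, so $\mu\in(\mu_0,0)$ and the map increases.

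Finally, for the limits in $\alpha$: the case $\alpha\searrow0$ is immediate since $\mu(\alpha) > \mu_0(\alpha)\to\infty$ by \cref{rem:mu0-lambda0-vs-alpha}; for $\alpha\to\infty$ I would note that on the relevant branch $\mu(\alpha)$ is monotone and bounded (below by $\lambda$ when $\lambda\ge0$, because $\lambda = \mu(1 - s_1(\mu)/\alpha)\le\mu$; trapped between $\mu_0\to-\lambdaminnz(\mA)$ and $0$ when $\lambda<0$), hence converges to some finite $\mu_\infty > -\lambdaminnz(\mA)$, and then passing to the limit in $\mu = \lambda + \mu s_1(\mu)/\alpha$ with $s_1$ continuous at $\mu_\infty$ forces $\mu_\infty = \lambda$. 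The main obstacle I anticipate is the bookkeeping in the $\alpha$-monotonicity step: the sign of $\mu$, and therefore whether the map increases or decreases, flips at $\alpha = r(\mA)$ and also depends on the sign of $\lambda$, so keeping the sub-cases straight and verifying in each that the fixed point genuinely lies in the claimed subinterval of $(\mu_0,\infty)$ takes care; the two differentiations themselves are routine. Full details would go in \Cref{sec:proofs-properties}.
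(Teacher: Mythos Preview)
Your proposal is correct and follows essentially the same approach as the paper: both compute $h'(\mu)=1-\tfrac{1}{\alpha}s_2(\mu)$ via the identity $\mA(\mA+\mu\mI)^{-1}-\mu\mA(\mA+\mu\mI)^{-2}=\mA^2(\mA+\mu\mI)^{-2}$, both implicitly differentiate in $\alpha$ to get $\tfrac{d\mu}{d\alpha}$ with sign opposite to $\operatorname{sign}(\mu)$, and both finish the $\alpha$-monotonicity by pinning down the sign of $\mu$ in each regime. The only cosmetic differences are that the paper cites \cref{rem:joint-signs-lambda-mu} for the sign of $\mu$ whereas you re-derive it inline from \cref{rem:mu0-lambda0-vs-alpha} and $h(0)=0$, and your $\alpha\searrow0$ argument (just $\mu>\mu_0\to\infty$) is a bit cleaner than the paper's inverse-function computation.
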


\begin{remark}
    [Joint signs of $\lambda$ and $\mu$]
    \label{rem:joint-signs-lambda-mu}
    When $\lambda \ge 0$, 
    for any $\alpha > 0$, 
    we have $\mu \ge 0$,
    where $\mu$ is the unique solution to 
    \cref{eq:sketched-modified-lambda} in $(\mu_0, \infty)$.
    When $\lambda < 0$,
    for $\alpha \le r(\bA)$,
    we have $\mu \ge 0$,
    while for $\alpha > r(\bA)$,
    we have 
    $\mathrm{sign}(\mu) 
    = \mathrm{sign}(\lambda)$.
\end{remark}

\begin{proposition}
    [Concavity, bounds, and asymptotic behaviour of $\mu$ in $\lambda$]
    \label{rem:concavity-mu-in-lambda}
    The function $\lambda \mapsto \mu(\lambda)$,
    where $\mu(\lambda)$ is the solution to \cref{eq:sketched-modified-lambda}
    is a concave function over $(\lambda_0, \infty)$.
    Furthermore, for any $\alpha \in (0, \infty)$,
    $\mu(\lambda) \le \lambda + \tfrac{1}{q} \tr[\mA]$ for all $\lambda \in (\lambda_0, \infty)$; and when $\alpha \le r(\bA)$,
    $\mu(\lambda) \geq \lambda$ for all $\lambda \in (\lambda_0, \infty)$,
    otherwise $\mu(\lambda) \geq \lambda$
    for $\lambda \geq 0$.
    Additionally,
    $\lim_{\lambda \nearrow \infty} |\mu(\lambda) - (\lambda + \tfrac{1}{q} \tr[\mA])| = 0$. 
\end{proposition}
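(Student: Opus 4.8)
The plan is to eliminate $\lambda$ in favor of $\mu$ and study the resulting inverse function. Define $g(\mu) \defeq \mu \paren{1 - \tfrac{1}{q} \tr \bracket{\mA \inv{\mA + \mu \mI_p}}}$, so that the fixed-point equation \cref{eq:sketched-modified-lambda} reads $\lambda = g(\mu)$ and, by \cref{prop:monotonicities-lambda-alpha}, $\mu(\lambda) = g^{-1}(\lambda)$ on $(\lambda_0, \infty)$. Using the resolvent identity $\mu \mA \inv{\mA + \mu \mI_p} = \mA - \mA^2 \inv{\mA + \mu \mI_p}$ I would first rewrite
\[
g(\mu) = \mu - \tfrac{1}{q} \tr[\mA] + \tfrac{1}{q} \tr \bracket{\mA^2 \inv{\mA + \mu \mI_p}}.
\]
Along the relevant curve $\mu > \mu_0 > -\lambdaminnz(\mA)$, so $\mA + \mu \mI_p$ is positive definite on the range of $\mA$ and every trace of the form $\tr[\mA^2 \paren{\mA + \mu \mI_p}^{-k}]$ is a strictly positive sum over the nonzero eigenvalues of $\mA$; this positivity is the one ingredient used throughout.

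For concavity, differentiating gives $g'(\mu) = 1 - \tfrac{1}{q} \tr \bracket{\mA^2 \paren{\mA + \mu \mI_p}^{-2}}$ and $g''(\mu) = \tfrac{2}{q} \tr \bracket{\mA^2 \paren{\mA + \mu \mI_p}^{-3}} > 0$. The first equation of \cref{eq:mu0-lambda0-fps} is precisely the statement $g'(\mu_0) = 0$, so $g'$ is strictly increasing, hence positive, on $(\mu_0, \infty)$; together with the endpoint values $g(\mu_0) = \lambda_0$ (the second equation of \cref{eq:mu0-lambda0-fps}) and $g(\mu) \to \infty$ as $\mu \to \infty$, this makes $g$ a strictly increasing, strictly convex, smooth bijection of $(\mu_0, \infty)$ onto $(\lambda_0, \infty)$. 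Its inverse $\mu = g^{-1}$ is therefore smooth with $\mu'(\lambda) = 1/g'(\mu(\lambda)) > 0$ and $\mu''(\lambda) = -g''(\mu(\lambda))/g'(\mu(\lambda))^3 < 0$, which is the claimed (strict) concavity on $(\lambda_0, \infty)$; the slope $\mu'$ blows up as $\lambda \searrow \lambda_0$ because $g'(\mu_0) = 0$, but this does not affect concavity on the open interval.

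For the bounds, the rewritten fixed point yields the exact identity
\[
\mu = \lambda + \tfrac{1}{q} \tr[\mA] - \tfrac{1}{q} \tr \bracket{\mA^2 \inv{\mA + \mu \mI_p}} = \lambda + \tfrac{1}{q} \tr \bracket{\mu \mA \inv{\mA + \mu \mI_p}}.
\]
The middle expression writes $\mu$ as $\lambda + \tfrac{1}{q}\tr[\mA]$ minus a strictly positive quantity, giving $\mu(\lambda) < \lambda + \tfrac{1}{q}\tr[\mA]$ for every $\lambda \in (\lambda_0, \infty)$. The rightmost expression writes $\mu - \lambda$ as $\tfrac{1}{q}$ times a sum over the nonzero eigenvalues of $\mA$ whose every term has the sign of $\mu$, so $\mu \ge 0$ implies $\mu \ge \lambda$; by \cref{rem:joint-signs-lambda-mu}, $\mu \ge 0$ holds on all of $(\lambda_0, \infty)$ when $\alpha \le r(\mA)$ and for $\lambda \ge 0$ when $\alpha > r(\mA)$, and in the remaining subcase ($\alpha \le r(\mA)$, $\lambda < 0$) we trivially have $\mu \ge 0 > \lambda$. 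This is exactly the stated lower bound. For the asymptotics, the first identity above gives $|\mu(\lambda) - (\lambda + \tfrac{1}{q}\tr[\mA])| = \tfrac{1}{q}\tr[\mA^2 \inv{\mA + \mu \mI_p}]$, which tends to $0$ because each summand $\lambda_i^2/(\lambda_i + \mu)$ does as $\mu \to \infty$, and $\mu(\lambda) \to \infty$ as $\lambda \to \infty$ by \cref{prop:monotonicities-lambda-alpha}.

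The calculation is almost entirely routine once one passes to $g^{-1}$; the only step that needs real care is establishing that $g$ is increasing on the whole interval $(\mu_0, \infty)$ --- which hinges on recognizing the first equation of \cref{eq:mu0-lambda0-fps} as exactly $g'(\mu_0) = 0$ and then invoking $g'' > 0$ --- together with checking that $g'$ stays strictly positive on the open interval so that the second-derivative characterization of concavity is legitimate.
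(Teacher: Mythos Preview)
Your proof is correct and follows essentially the same inverse-function strategy as the paper: both define $g(\mu)=\mu\bigl(1-\tfrac{1}{q}\tr[\mA(\mA+\mu\mI)^{-1}]\bigr)$, show it is strictly increasing and convex on $(\mu_0,\infty)$ via $g''>0$ and $g'(\mu_0)=0$, and deduce concavity of $\mu=g^{-1}$; the lower bound and the $\lambda\to\infty$ asymptotic are also handled identically, via the identity $\mu-\lambda=\tfrac{1}{q}\tr[\mu\mA(\mA+\mu\mI)^{-1}]$ together with \cref{rem:joint-signs-lambda-mu}.

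The one place you diverge is the upper bound $\mu\le\lambda+\tfrac{1}{q}\tr[\mA]$. The paper obtains it by the tangent-line inequality for concave functions at a point $\tilde\lambda\to\infty$, bounding the slope and intercept up to an $\epsilon$ that is then sent to zero. Your route is more direct: from the exact rearrangement $\mu=\lambda+\tfrac{1}{q}\tr[\mA]-\tfrac{1}{q}\tr[\mA^2(\mA+\mu\mI)^{-1}]$ the bound (in fact strict) is immediate since the subtracted trace is positive for $\mu>\mu_0>-\lambdaminnz(\mA)$. This also makes the asymptotic statement a one-liner, since the gap $|\mu-(\lambda+\tfrac{1}{q}\tr[\mA])|$ equals exactly that trace and vanishes as $\mu\to\infty$. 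Your approach buys a cleaner, $\epsilon$-free argument at no cost.
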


\subsection{Second-order equivalence}

Below we provide a few additional properties related to the inflation factor $\mu'$
in \eqref{eq:mu-prime}, that appears in the statement of \Cref{thm:second-order-sketch}.

\begin{remark}
    \label{rem:alt-mu-prime}
    We have the following alternative form for $\mu'$:
    \begin{align}
        \mu' = \tfrac{1}{q} \tr \bracket{\mu^2 \inv{\mA + \mu \mI_p} \mPsi \inv{\mA + \mu \mI_p}} \frac{\partial \mu}{\partial \lambda}.
    \end{align}
    Note that the term $\frac{\partial{\mu}}{\partial \lambda}$ does not depend in any way on $\mPsi$, and that the remaining term is well-controlled for any $\mu > \mu_0$. Therefore, $\mu'$ will only diverge when $\frac{\partial{\mu}}{\partial \lambda}$ diverges, which occurs as $\lambda \to \lambda_0$. This is clearly visible in \cref{fig:mu-equiv} (top) as $\lambda$ approaches $\lambda_0$, where the slope of the curve tends to infinity. Additionally, because $\mu$ is increasing in $\lambda$, this decomposition shows that $\mu' \geq 0$.
\end{remark}

\begin{remark}[Vanishing $\mu'$]
\label{rem:mu-prime-to-0}
If $\mathrm{Ker}(\mA) \subseteq \mathrm{Ker}(\mPsi)$, then as $\mu \to 0$, $\mu' \searrow 0$. The best intuition for this is in the case $\mPsi = \mA$. Because we can only have $\mu = 0$ for $\alpha > r(\mA)$ and $\lambda = 0$, we have $\mS \biginv{\mS^\ctransp \mA \mS + \lambda \mI_q} \mS^\ctransp \mA \mS \biginv{\mS^\ctransp \mA \mS + \lambda \mI_q} \mS^\ctransp \big|_{\lambda = 0} = \mS \biginv{\mS^\ctransp \mA \mS + \lambda \mI_q} \mS^\ctransp \big|_{\lambda = 0}$, and the second-order equivalence reduces to the first-order equivalence with no inflation factor. This remarkable property means that sketching leads to extremely accurate estimates with no spectral distortion, but only in low-rank settings with little regularization. 
\end{remark}

\subsection{Illustrative examples}

In order to better understand \cref{thm:sketched-pseudoinverse,thm:second-order-sketch}, 
we consider a few examples with special choices of the matrix $\bA$. When the spectrum of $\mA$ converges to a particular distribution of eigenvalues, $\mu$ will converge to a value that is deterministic given $\mA$. %

\begin{figure}[t]
    \centering
    \includegraphics[width=\textwidth]{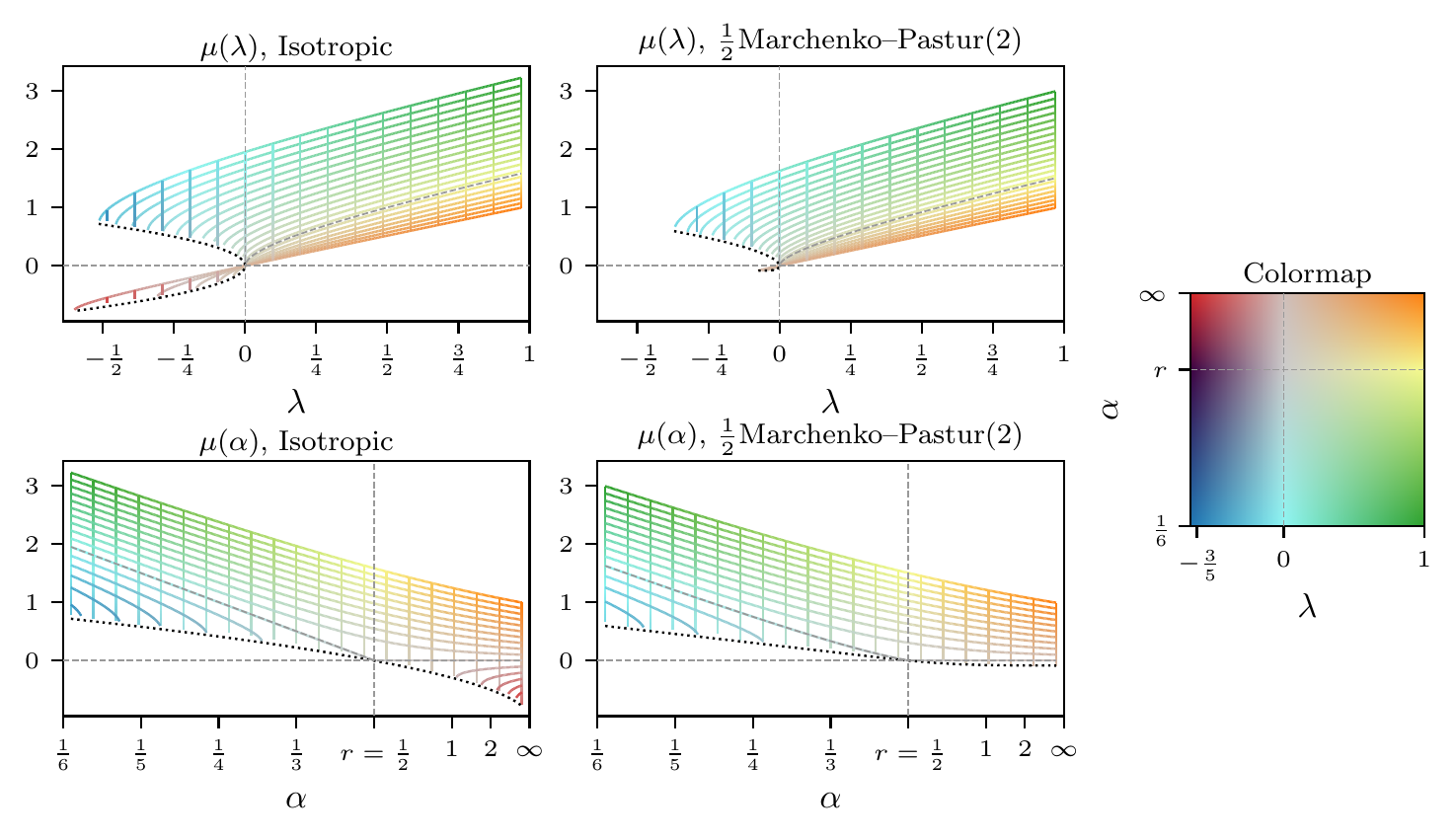}
    \caption{Plots of $\mu$ as a function of $\lambda$ and $\alpha$ for rank-deficient isotropic (left) and Marchenko--Pastur (middle) spectra, normalized so that $\frac{1}{p} \tr[\mA] = r = 1/2$. 
    The values of $\lambda$ and $\alpha$ in each location of the plot are indicated by the colormap (right), shared between the two views of each plot. 
    As we sweep $\alpha$, we also plot $(\alpha, \lambda_0, \mu_0)$ (black, dotted). 
    We also plot the lines $\mu = 0$, $\lambda = 0$, and $\alpha = r$ (gray, dashed).
    The scaling of the $\mu$ and $\lambda$ axes are linear, and the scaling of the $\alpha$ axis is proportional to $1/\alpha$. 
    In this way we can clearly capture the general $\mu \approx \lambda + \tfrac{1}{p} \tr[A] / \alpha$ relationship for $\lambda > 0$, as well as the limiting behavior of $\mu = \lambda$ for large $\alpha$.
    The most significant difference between the two distributions is that for the isotropic distribution, $\lambdaminnz(\mA) = 1$, while for the Marchenko--Pastur case, $\lambdaminnz(\mA) = (\sqrt{2} - 1)^2/2 \approx 0.0859$, limiting the achievable negative values of $\mu$ when $\lambda < 0$ and $\alpha > r$.
    }
    \label{fig:mu-equiv}
\end{figure}

\begin{figure}[t]
    \centering
    \includegraphics[width=5in]{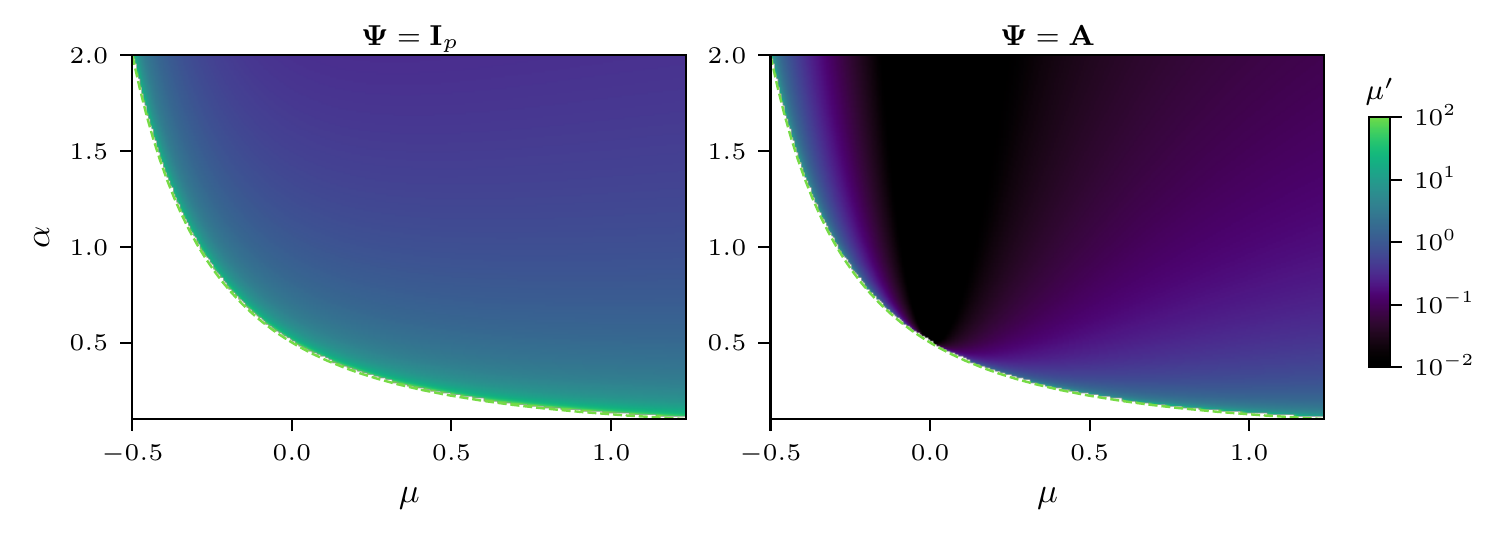}
    \caption{Plot of $\mu'$ as a function of $\mu$ and $\alpha$ for the rank-deficient isotropic spectrum with $r = 1/2$ for $\mPsi \in \set{\mI_p, \mA}$. In both cases, as $\mu \searrow \mu_0$ (dashed), $\mu' \nearrow \infty$. Otherwise, $\mu'$ is not too large. For $\mPsi = \mI_p$, $\mu'$ decays slowly in $\alpha$ and $\mu$. However, for $\mPsi = \mA$, there is a regime for $\alpha > r$ around $\mu = 0$ for which $\mu'$ tends to zero. Thus, the unregularized pseudo-inverse preserves $\mA$ remarkably well on its range when the sketch size is greater than the rank of the matrix, but outside of the range of $\mA$, it has non-negligible error.}
    \label{fig:mu-prime}
\end{figure}

\subsubsection{Isotropic rank-deficient matrix}

For the first example, let $0 < r \le 1$ be a real number.
We then consider $\mA = \begin{bsmallmatrix}\mI_{\floor{r p}} & \vzero \\ \vzero & \vzero \end{bsmallmatrix}$ such that $r(\mA) \to r$ as $p \nearrow \infty$. We have chosen the standard basis representation of this matrix, but the following results also hold for any $\mA$ that is isotropic on a subspace, regardless of basis. Such an $\mA$ includes settings such as $\mA = \mX^\transp \mX$ where $\mX \in \reals^{n \times p}$ is an orthogonal design matrix with orthonormal rows. In this case,
\begin{align}
    \mu = \frac{\lambda + \tfrac{r}{\alpha} - 1 + \sqrt{(\lambda + \frac{r}{\alpha} - 1)^2 + 4 \lambda}}{2}.
\end{align}
Furthermore, we have simple forms for $\mu_0$ and $\lambda_0$:
\begin{align}
    \mu_0 = \sqrt{\tfrac{r}{\alpha}} - 1, \quad
    \lambda_0 = - \paren{ \sqrt{\tfrac{r}{\alpha}} - 1}^2.
\end{align}
The expression for $\lambda_0$ can also be obtained directly from the minimum nonzero eigenvalue of the Marchenko--Pastur distribution with aspect ratio $\tfrac{\alpha}{r}$ and variance scaling $\tfrac{r}{\alpha}$, which describes $\mS^\ctransp \mA \mS$. In the case $\lambda = 0$, we have a very simple expression for $\mu$:
\begin{align}
    \mu = \begin{cases}
        \frac{r}{\alpha} - 1 & \text{if } \alpha < r, \\
        0 & \text{otherwise}.
    \end{cases}
\end{align}
We can also obtain the limiting behavior of $\mu$ for large $\lambda$ or small $\alpha$:
\begin{align}
    \label{eq:limiting-mu}
    \lim_{\lambda + \tfrac{r}{\alpha} \nearrow \infty} \frac{\mu}{\lambda + \tfrac{r}{\alpha}} = 1.
\end{align}
In \Cref{fig:mu-equiv} (left), we plot $\mu$ as a function of both $\lambda$ and $\alpha$.
We see that even for modest values of $\lambda > 0$ or $\alpha < r$, the relationship $\mu \sim \lambda + \tfrac{r}{\alpha}$ holds quite accurately. We see a clear transition point at $\alpha = r$ where $\lambda_0 = 0$, and on either side of which $\lambda_0$ decreases. Other properties from the previous sections, such as monotonicity, concavity in $\lambda$, and sign patterns are clearly visible in this plot as well.
We also plot $\mu'$ as a function of $\mu$ and $\alpha$ in \Cref{fig:mu-prime}, where we see that the inflation vanishes for $\mPsi = \mA$ only if $\alpha > r$ and $\mu = 0$. It is non-negligible otherwise, and tends to infinity as $\mu$ tends to $\mu_0$ for each $\alpha$.

\subsubsection{Marchenko--Pastur spectrum}

We also consider the case when $\bA$ is a random matrix
of the form $\bA = \tfrac{1}{n} \bZ^\top \bZ$,
where $\bZ \in \RR^{n \times p}$ contains i.i.d.\ entries
of mean $0$, variance $1$, and bounded moments of order $4 + \delta$
for some $\delta > 0$.
This case is of interest for real data settings
where $\bA$ will be a sample covariance matrix.
In this case, the spectrum of $\bA$ can be computed explicitly and is given by the Marchenko--Pastur law.
Computing $\mu$ explicitly in this case is possible,
but cumbersome. We instead provide numerical illustrations
on the behaviour of $\mu$ as a function of $\alpha$ and $\lambda$.

From \cref{fig:mu-equiv} (middle), we can see that the behavior of $\mu$ for the Marchenko--Pastur spectrum is not substantially different from the rank-deficient isotropic spectrum. The only regime that differs significantly is when $\alpha > r(\mA)$ and $\lambda < 0$, where $\lambda_0$ is much closer to $0$ than in the isotropic case, and so there is no equivalence for more negative values of $\lambda$.

It is also worth noting that when $\alpha < r(\bA) < 1$,
the na\"ive bound on the smallest
regularization $\lambda$ permissible is $0$
(as explained in the caption of \Cref{fig:lambda-minnz-bound}).
However, from \Cref{fig:mu-equiv}
we observe that the equivalence in \cref{thm:sketched-pseudoinverse} holds even for quite negative $\lambda$ (blue region), contrary to this na\"ive bound.
In fact, the true bound is almost the same as the rank-deficient isotropic case, $\lambda_0 = - \paren{ \sqrt{\tfrac{r}{\alpha}} - 1}^2$.

\begin{edited}
\section{Applications}
\label{sec:applications}
To demonstrate how to apply our theory to sketching-based algorithms, we give two concrete examples, demonstrating when the first-order equivalence can be sufficient to characterize performance and when the second-order equivalence is necessary.
We leave proof details to  \Cref{sec:proofs:applications} in the supplementary material.

\subsection{Sketch-and-project}
\label{sec:sketch-and-project}

The sketch-and-project algorithm, also known as the generalized Kaczmarz method, solves the satisfiable linear system $\mL \vx = \vb$ for some $\mL \in \complexset^{n \times p}$ via the following iterations:
\begin{align}
    \vx_{t} = \vx_{t-1} - \mL^\ctransp \mS_t (\mS_t^\ctransp \mL \mL^\ctransp \mS_t)^\dagger \mS_t^\ctransp (\mL \vx_{t-1} - \vb).
\end{align}
Here $\mS_t \in \complexset^{n \times m}$ are independently drawn random sketching matrices. 
This algorithm classically enjoys linear convergence of $\E \bigbracket{\smallnorm[2]{\vx_t - \vx_*}^2}$ where $\vx_* = \mL^\dagger \vb$ that depends only on the smallest eigenvalue of $\expect{\mL^\ctransp \mS_t (\mS_t^\ctransp \mL \mL^\ctransp \mS_t)^\dagger \mS_t^\ctransp \mL}$ \cite{gower2015randomized}. 
Since this is the same quantity of interest as in our sketching equivalence, we obtain a similar convergence guarantee
in the asymptotic limit almost surely by applying \cref{thm:sketched-pseudoinverse} with $\mA = \mL \mL^\ctransp$ 
\editedinlinetwo{(see \Cref{sec:proof:eq:sketch-and-project})}:
\begin{align+}
    \label{eq:sketch-and-project}
    \norm[2]{\vx_t - \vx_*}^2 \editedinlinethree{{}\lesssim{}} \rho^t \norm[2]{\vx_0 - \vx_*}^2 
    \quad \text{where} \quad
    \rho \defeq \frac{\mu}{\lambdaminnz(\mL \mL^\ctransp) + \mu}.
\end{align+}
\editedinlinethree{Here by $a_{n,t} \lesssim b_{n,t}$, we mean that for any fixed $t$, $\liminf_{n \to \infty} b_{n,t} - a_{n,t} \geq 0$, and the result holds for an implicit sequence of $\vx_0$, $\vx_*$ with increasing dimensions and uniformly bounded norms such that \Cref{thm:sketched-pseudoinverse} can be applied.}
Since there are no second-order effects, and we use $\lambda = 0$, this convergence result holds in fact for any rotationally invariant sketch by \cref{rem:rotationally-invariant}.
Asymptotically, assuming we can compute the product $\mL^\ctransp \mS_t$ efficiently, the computational bottleneck comes from evaluating the pseudoinverse $\mL^\ctransp \mS_t (\mS_t^\ctransp \mL \mL^\ctransp \mS_t)^\dagger$, which typically has complexity $O(mp \min\set{m, p})$.\footnote{\edited{Our remarks here also hold directly for any possible ``galactic'' matrix inversion algorithm of complexity $O(mp \min{\set{m, p}}^\delta)$ for some $\delta > 0$ \cite{alman2021laser}, provided $\mL^\ctransp \mS_t$ can be computed in similar time.}}
To reach a desired residual $\norm[2]{\mL \vx_t - \vb}^2 \leq \varepsilon$, we must run the algorithm for at most $t_\varepsilon = \ceil{\log(\varepsilon/\lambda_{\max}(\mL \mL^\ctransp) \norm[2]{\vx_0 - \vx_*}^2)/\log (\rho)}$ iterations. The total complexity of the algorithm is therefore $O(m^2 p t_\varepsilon)$ for $m < p$, compared to $O(n p \min{\set{n, p}})$ to solve the system directly.
Since both of these quantities diverge in the asymptotic limit, it is of more interest to study their quotient. To that end, we define the \emph{relative computation factor} $\alpha^2 t_\varepsilon$ for $\alpha = \tfrac{m}{n}$, which is equal to the quotient up to a factor of $\tfrac{\min{\set{n, p}}}{n}$, which does not depend on $\alpha$.

\begin{remark}
[Optimal sketch size for minimizing computation]
\label{rem:sketch-and-project}
The asymptotic relative computation factor $\alpha^2 t_\varepsilon$ is characterized as follows. 
For $\alpha \geq r(\mL)$, $t_\varepsilon = 1$ for all $\varepsilon$, and so $\alpha^2 t_\varepsilon = \alpha^2$.
For all sufficiently small $\varepsilon$, $\lim_{\alpha \searrow 0} \alpha^2 t_\varepsilon = 0$. 
For $0 < \alpha < r(\mL)$, $\lim_{\varepsilon \searrow 0} \alpha^2 t_{\varepsilon} = \infty$. 
Thus, for small $\varepsilon$, the computational complexity of sketch-and-project is minimized globally by letting $\alpha \searrow 0$ and locally by choosing $\alpha = r(\mL)$.
\end{remark}

\begin{figure}
    \centering
    \includegraphics[width=5.5in]{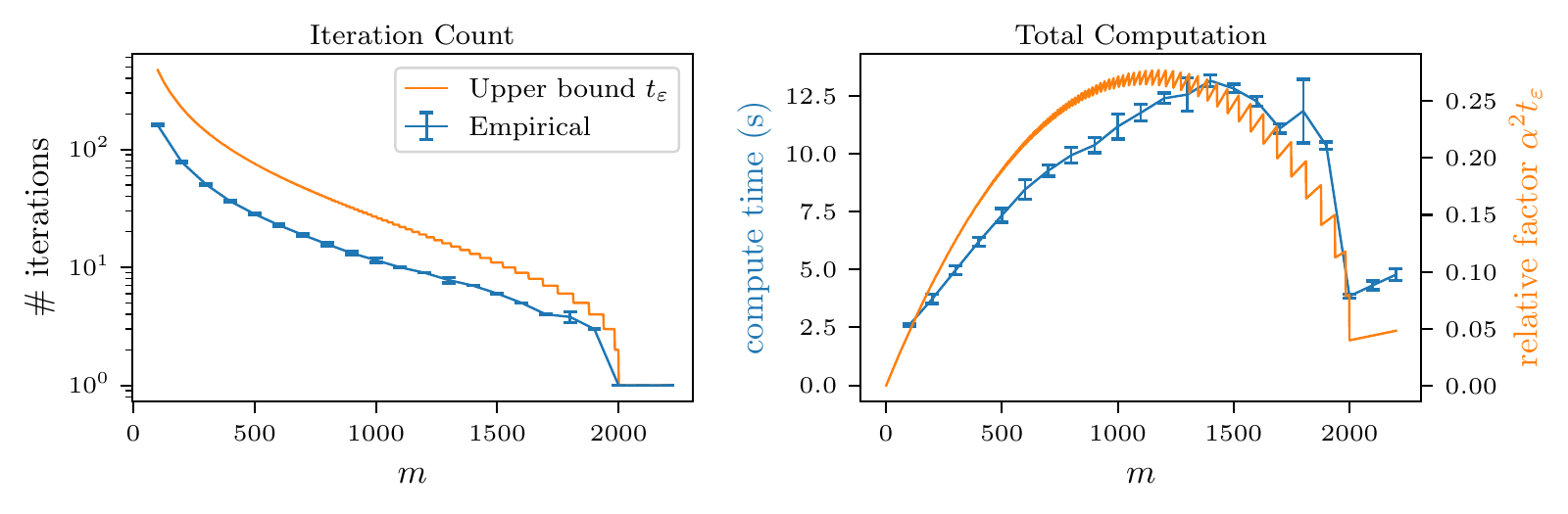}
    \caption{\editedinline{Empirical computation time of sketch-and-project as a function of sketch size $m$. We sample a fixed $[\mL]_{ij} \overset{\iid}{\sim} \normal(0, 1)$ and $\vx_* \sim \normal(\vzero, \tfrac{1}{p} \mI_p)$ for $n = 10^4$, $p = 2000$. We run the algorithm until $\tfrac{1}{n}\norm[2]{\mL \vx_t - \vb}^2 \leq 10^{-3}$. We find that the number of iterations (blue) matches our upper bound $t_\varepsilon$ (orange) up to a constant factor (left). Additionally (right), we find that the trend of the wall-clock time of the algorithm (blue) matches the relative computation factor $\alpha^2 t_\varepsilon$ (orange), and that the computation time is minimized by taking $\alpha$ as small as possible. Error bars denote standard deviation over 10 random trials.}} 
    \label{fig:sketch-and-project}
\end{figure}

We demonstrate this observation empirically in \cref{fig:sketch-and-project}. In order to keep the cost of evaluating $\mL^\ctransp \mS_t$ to $O(m^2p)$, we sample sparse Gaussian matrices $\mS_t$ according to \cref{rem:sparse-sketching} having elements drawn from $\normal(0, \tfrac{n}{m^2})$ with probability $\tfrac{m}{n}$ and 0 otherwise, such that there are $O(m^2)$ nonzero elements of $\mS_t$ with high probability. 

\subsection{Sketched ridge regression}
\label{sec:sketched-ridge}
In sketch-and-project, we introduced new randomness in each iteration, and as a result the first-order equivalence was sufficient to characterize the algorithm's performance. However, with less randomness, the second-order effects are much more pronounced. We illustrate this in the setting of sketched ridge regression, also known as sketch-and-solve, which is an important problem in randomized numerical linear algebra \cite{murray2023randomized}.

Concretely, we can define the sketched ridge regression problem for design matrix $\mL \in \complexset^{n \times p}$, targets $\vb \in \complexset^n$, and sketching matrix $\mS \in \complexset^{n \times m}$ as
\begin{align}
    \widehat{\vx} = \argmin_{\vx} \tfrac{1}{n} \bignorm[2]{\mS^\ctransp (\mL \vx - \vb)}^2 + \lambda \norm{\vx}^2.
\end{align}
To connect back to sketch-and-project from the previous section, a single iteration of sketch-and-project solves this exact problem if we set $\lambda = 0$ and replace $\vb$ by $\vb - \mL \vx_t$. For brevity and parallelism with sketch-and-project, we only consider this formulation of sketched ridge regression. However, similar analyses can be performed for ``dual'' sketching where we consider residuals $\mL \mS' \vx - \vb$, as well as joint sketching with residuals $\mS^\ctransp (\mL \mS' \vx - \vb)$; see \cite{lejeune2022ridge}.

The solution $\widehat{\vx}$ is given in terms of the sketched (regularized) pseudoinverse, which means we can obtain its first-order asymptotic equivalent from \cref{thm:sketched-pseudoinverse} with $\mA = \tfrac{1}{n} \mL \mL^\ctransp$:
\begin{align}
    \widehat{\vx} 
    = \tfrac{1}{n} \mL^\ctransp \mS \biginv{\mS^\ctransp \tfrac{1}{n} \mL \mL^\ctransp \mS + \lambda \mI_p}  \mS^\ctransp \vb
    \asympequi \tfrac{1}{n} \mL^\ctransp \biginv{\tfrac{1}{n} \mL \mL^\ctransp + \mu \mI_p} \vb \rdefeq \widehat{\vx}_\mathrm{equiv}.
\end{align}
Furthermore, we can characterize second-order errors; if we define the quadratic error\begin{align}
    \error_\mPhi(\vx, \vx') \defeq (\vx - \vx')^\ctransp \mPhi (\vx - \vx'),
\end{align}
we can apply \cref{thm:second-order-sketch} with $\mPsi = \tfrac{1}{n} \mL \mPhi \mL^\ctransp$ to obtain
\begin{align}
    \label{eq:ridge-error}
    \error_\mPhi \bigparen{\widehat{\vx}, \vx'}
    \asympequi \error_\mPhi \bigparen{ \widehat{\vx}_\mathrm{equiv}, \vx'} + \frac{\mu'}{n} \vb^\ctransp \biginv[2]{\tfrac{1}{n} \mL \mL^\ctransp + \mu \mI_n } \vb,
\end{align}
where
\begin{align}
    \mu' = \frac{\frac{1}{m} \tr \bracket{ \mu^3 \inv{\frac{1}{n} \mL \mL^\ctransp + \mu \mI_n} \frac{1}{n} \mL \mPhi \mL^\ctransp \inv{\frac{1}{n} \mL \mL^\ctransp + \mu \mI_n} }}{\lambda + \frac{1}{m} \tr \bracket{\mu^2 \frac{1}{n} \mL \mL^\ctransp \paren{\frac{1}{n} \mL \mL^\ctransp + \mu \mI_n}^{-2}} }
    \ge 0.
\end{align}
In other words, the error of the sketched solution can be decomposed into the error of the first-order equivalent solution plus an inflation quantity. Note that this inflation is only the additional effect due to sketching. This should not be conflated with estimate variance, which is generally defined to include the effect of noise in $\vb$, which will appear in both the $\error_\mPhi(\widehat{\vx}_\mathrm{equiv}, \vx')$ and inflation terms.

\begin{figure}
    \centering
    \includegraphics[width=5.5in]{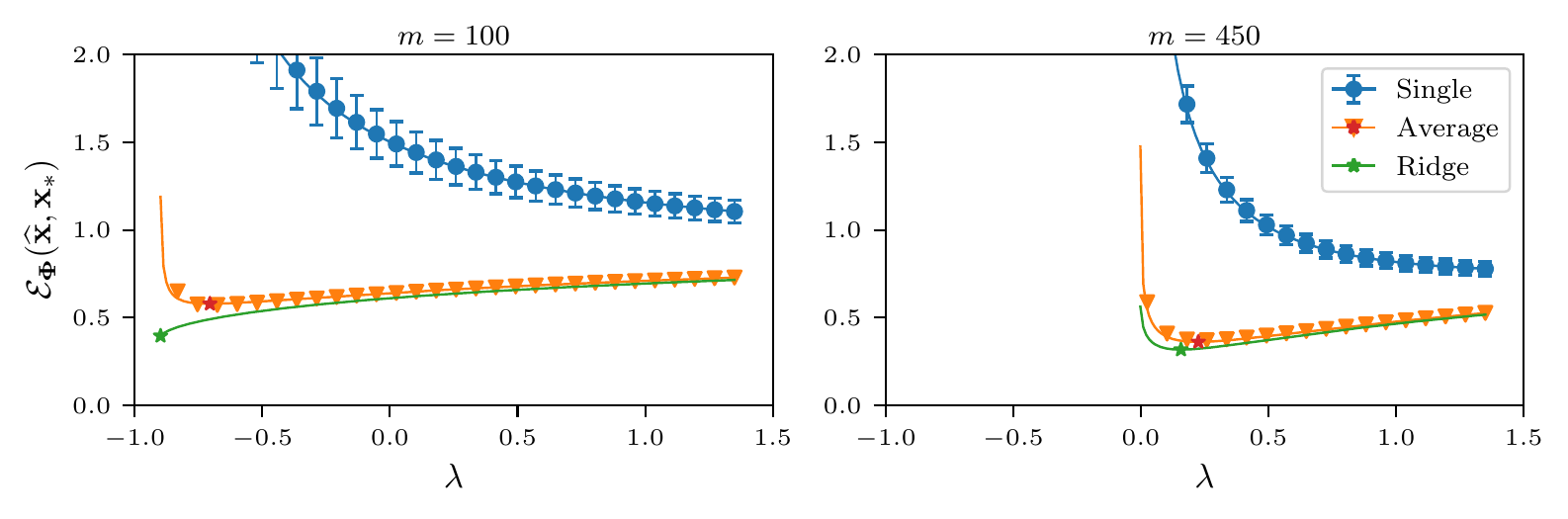}
    \caption{\editedinline{Estimation error $\error_\mPhi(\widehat{\vx} ,\vx_*) = \norm[2]{\widehat{\vx} - \vx_*}^2$ for a sketched ridge regression problem as a function of $\lambda$. We sample a fixed $[\mL]_{ij} \overset{\iid}{\sim} \normal(0, 1)$ and $\vx_* \sim \normal(\vzero, \tfrac{1}{p} \mI_p)$ and generate a fixed $\vb = \mL \vx_* + \vh$ with $\vh \sim \normal(\vzero, \sigma^2 \mI_n)$ for $n=2000$, $p=400$, and $\sigma = 1.5$.
    We plot the theoretical asymptotic error from \cref{eq:ridge-error} (lines) as well as empirical values (circles and triangles), averaging over $K=30$ random sketches $\mS$. We plot the single estimate error (blue), average of $K$ estimates (orange), and equivalent ridge predictor (green) for an undersampled setting ($m = 100$, left) and an oversampled setting ($m = 450$, right). In the undersampled setting, the optimal error (stars) for the averaged estimate is obtained by using negative $\lambda$.
    We emphasize that the data model here is underparameterized with a moderate signal-to-noise ratio and is not contrived to make negative regularization optimal as seen in some overparameterized settings \cite{kobak_lomond_sanchez_2020,wu_xu_2020}.}}
    \label{fig:ridge-error}
\end{figure}

The inflation term can be quite large when $\lambda$ is near $\lambda_0$, meaning the sketched solution is quite poor; however, by averaging $K$ independently sketched solutions we can replace $\mu'$ by $\tfrac{\mu'}{K}$, allowing us to control the inflation via randomized parallelization, such as in distributed settings. We demonstrate this theoretically and empirically in \cref{fig:ridge-error}. Note how in the undersampled regime with $m=100$, which is the regime of interest for distributed optimization as it reduces the computational cost per worker, the optimal regularization penalty $\lambda$ can in fact be \emph{negative}, even if the optimal ridge penalty $\mu$ for the equivalent problem is positive. Our theoretical characterization enables us to handle this case elegantly. \editedinlinetwo{The intuition behind this is that the smaller the sketch size is, the more regularization is added, and so to achieve a target regularization (the optimal ridge penalty), negative regularization may be required.}

\end{edited}

\section{Discussion and extensions}
\label{sec:discussion}

In this paper, we have provided a detailed look at the asymptotic effects of i.i.d.\ sketching on matrix inverses.
We have provided an extension of existing asymptotic equivalence results to real-valued regularization (including negative) and used this result to obtain both first- and second-order asymptotic equivalences for the sketched regularized pseudoinverse. 
\begin{edited}
We have also described how to apply these equivalences to analyze algorithms based on random sketching, providing novel insights into sketch-and-project and ridge regression as concrete examples.
\end{edited}

Our work is far from a complete characterization of sketching. We now list some natural extensions to our results.

\paragraph{Relaxing assumptions, strengthening conclusions}
As mentioned in \Cref{sec:main_results},
we make minimal assumptions on the base matrix $\bA$.
In particular, we do not assume that the empirical spectral
distribution of $\bA$ converges to any fixed limit.
The assumption that the maximum and minimum eigenvalue of $\bA$
be bounded away from $0$ and $\infty$ can be weakened.
In particular, one can let some eigenvalues to escape
to $\infty$,
and have some eigenvalues to decay to $0$,
provided certain functionals of the eigenvalues
remain bounded.
Our assumptions on the sketching matrix $\bS$ are also weak.
We do not assume any distributional structure on its entries
and only require bounded moments of order $8 + \delta$
for some $\delta > 0$.
Using a truncation strategy,
one can push this to only requiring moments of order $4 + \delta$
for some $\delta > 0$ for almost sure equivalences up to order $2$
that we show in this paper.
Finally, while our asymptotic results give practically relevant insights for finite systems, we lack a precise characterization for non-asymptotic settings. In particular, the rate of convergence depends on a number of factors including the choice of $\lambda$ and the higher order moments of the elements of $\mS$.

\paragraph{Generalized sketching}
Our assumption that the elements of the matrix $\mS$ are i.i.d.\ draws from some distribution limits its application in practical settings on two key fronts:
the effect of a rotationally invariant sketch is isotropic regularization, \editedinlinetwo{i.i.d.\ sketches can be slow to apply,} and there is unnecessary distortion of the spectrum of $\mA$ for $q \nearrow p$. We now discuss how to extend our framework to extend to more general classes of sketches that more closely align with those used in practice.

We may desire to use generalized non-isotropic ridge regularization, to perform Bayes-optimal regression 
(see, e.g., Chapter 3 of \cite{van-wieringen_2015})
or to avoid multiple descent \cite{mel2021regression,yilmaz2022descent}, or we may find ourselves using non-isotropic sketching matrices, such as in adaptive sketching~\cite{lacotte2019adaptive} where the sketching matrix depends on the data. We can cover these cases with the following extension of \cref{thm:sketched-pseudoinverse}. 
\begin{corollary}
    [Non-isotropic sketching equivalence]
    \label{cor:sketched-pseudoinverse-noniso}
    Assume the setting of \cref{thm:sketched-pseudoinverse}.
    Let $\mR$ be an invertible $p \times p$ positive semidefinite matrix,
    either deterministic or random but independent of $\mS$ with $\limsup \norm[\mathrm{op}]{\mR} < \infty$, and 
    let $\widetilde{\mS} = \mR^{1/2} \mS$.
    Then for each $\lambda > - \liminf \lambdaminnz(\widetilde{\mS}^\top \mA \widetilde{\mS})$
    as $p, q \nearrow \infty$ such that
    $0 < \liminf \tfrac{q}{p} \le \limsup \tfrac{q}{p} < \infty$,
    \begin{align}
        \widetilde{\mS}
        \biginv{\widetilde{\mS}^\top \mA \widetilde{\mS} + \lambda \mI_q }
        \widetilde{\mS}^\top
        \asympequi
        \biginv{\mA + \mu \mR^{-1} },
    \end{align}
    where $\mu$ is the most positive solution to
    \begin{align}
    \lambda = \mu \paren{ 1 - \tfrac{1}{q} \tr \bracket{\mA \inv{\mA + \mu \mR^{-1}}}}.
   \end{align}
\end{corollary}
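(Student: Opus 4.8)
The plan is to absorb $\mR$ into the base matrix and reduce to \cref{thm:sketched-pseudoinverse}. First I would set $\mA' \defeq \mR^{1/2} \mA \mR^{1/2}$ and observe that $\widetilde{\mS}^\top \mA \widetilde{\mS} = \mS^\top \mA' \mS$, so that
\begin{align}
    \widetilde{\mS} \biginv{\widetilde{\mS}^\top \mA \widetilde{\mS} + \lambda \mI_q} \widetilde{\mS}^\top
    = \mR^{1/2} \, \mS \biginv{\mS^\top \mA' \mS + \lambda \mI_q} \mS^\top \, \mR^{1/2}.
\end{align}
The matrix $\mA'$ is positive semidefinite with $\norm[\op]{\mA'} \le \norm[\op]{\mR}\,\norm[\op]{\mA}$ uniformly bounded in $p$, and (for well conditioned $\mR$; see the last paragraph) $\liminf \lambdaminnz(\mA') > 0$, so $\mA'$ meets the hypotheses of \cref{thm:sketched-pseudoinverse}. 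Applying that theorem with $\mA'$ in place of $\mA$, and using $\lambda_0 \asympequi -\lambdaminnz(\mS^\top \mA' \mS) = -\lambdaminnz(\widetilde{\mS}^\top \mA \widetilde{\mS})$ to translate the regularization threshold, gives, for every $\lambda > -\liminf \lambdaminnz(\widetilde{\mS}^\top \mA \widetilde{\mS})$,
\begin{align}
    \mS \biginv{\mS^\top \mA' \mS + \lambda \mI_q} \mS^\top \asympequi \inv{\mA' + \mu \mI_p},
\end{align}
where $\mu$ is the most positive root of $\lambda = \mu \bigparen{ 1 - \tfrac{1}{q} \tr\bracket{\mA' \inv{\mA' + \mu \mI_p}}}$.

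Next I would sandwich this equivalence by $\mR^{1/2}$. Since $\mR^{1/2}$ is deterministic (or, if random, independent of $\mS$, in which case the argument is run conditionally on $\mR$) with uniformly bounded operator norm, and since conjugation $\bTheta \mapsto \mR^{1/2} \bTheta \mR^{1/2}$ sends uniformly trace-norm-bounded matrices to uniformly trace-norm-bounded matrices, the product rule applied on each side yields
\begin{align}
    \widetilde{\mS} \biginv{\widetilde{\mS}^\top \mA \widetilde{\mS} + \lambda \mI_q} \widetilde{\mS}^\top
    \asympequi \mR^{1/2} \inv{\mA' + \mu \mI_p} \mR^{1/2}.
\end{align}
The remaining steps are purely algebraic. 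Writing $\mA' + \mu \mI_p = \mR^{1/2}\bigparen{\mA + \mu \mR^{-1}}\mR^{1/2}$ gives $\inv{\mA' + \mu \mI_p} = \mR^{-1/2}\inv{\mA + \mu \mR^{-1}}\mR^{-1/2}$, so the right-hand side collapses to $\inv{\mA + \mu \mR^{-1}}$. Likewise, by cyclicity of the trace, $\tr\bracket{\mA' \inv{\mA' + \mu \mI_p}} = \tr\bracket{\mA \inv{\mA + \mu \mR^{-1}}}$, so the fixed-point equation for $\mu$ becomes $\lambda = \mu \bigparen{ 1 - \tfrac{1}{q} \tr\bracket{\mA \inv{\mA + \mu \mR^{-1}}}}$, and one checks that ``most positive root'' is preserved under this reparameterization. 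This gives the claimed equivalence.

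The one non-routine point, which I expect to be the main obstacle, is verifying that $\mA' = \mR^{1/2}\mA\mR^{1/2}$ inherits the hypothesis $\liminf \lambdaminnz(\mA') > 0$ of \cref{thm:sketched-pseudoinverse}. The nonzero eigenvalues of $\mA'$ coincide with those of $\mA\mR$, and if $\mR$ has condition number bounded uniformly in $p$ this is immediate from $\lambdaminnz(\mA') \ge \lambda_{\min}(\mR)\,\lambdaminnz(\mA)$. If $\mR$ is permitted to become ill conditioned, one instead replaces $\mR$ by $\mR_\delta \defeq \mR + \delta \mI_p$, applies the above to $\mR_\delta$, and interchanges the $p \nearrow \infty$ and $\delta \searrow 0$ limits by a uniform-convergence argument exactly as in the proof of \cref{thm:sketched-pseudoinverse}; making this interchange rigorous, in particular controlling the $\mR^{-1}$-regularization uniformly, is where the real work lies.
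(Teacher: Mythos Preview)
Your approach is exactly the paper's: set $\mA' = \mR^{1/2}\mA\mR^{1/2}$, apply \cref{thm:sketched-pseudoinverse} to $\mA'$, then conjugate by $\mR^{1/2}$ and simplify algebraically. In fact you are more careful than the paper's proof, which simply invokes \cref{thm:sketched-pseudoinverse} without explicitly verifying the hypothesis $\liminf \lambdaminnz(\mA') > 0$; your observation that this requires $\mR$ to have bounded condition number (or else a $\mR_\delta$ limiting argument) is a genuine refinement.
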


\begin{proof}
The proof uses simple algebraic manipulations.
Observe that, since the operator norm is sub-multiplicative,
and $\| \mR \|_{\mathrm{op}}$, $\| \bA \|_{\mathrm{op}}$
are uniformly bounded in $p$,
$\| \mR^{1/2} \bA \mR^{1/2} \|_{\mathrm{op}}$ is also uniformly bounded in $p$.
Using \cref{thm:sketched-pseudoinverse},
we then have that
\[
    \bS
    \biginv{
        \bS^\top \mR^{1/2} \bA \mR^{1/2} \bS
        + \lambda \bI_q
    }
    \bS^\top
    \asympequi
    \biginv{
        \mR^{1/2} \bA \mR^{1/2} + \mu \bI_q
    }.
\]
Right and left multiplying both sides by $\mR^{1/2}$,
and writing $\widetilde{\bS} = \mR^{1/2} \bS$,
we get
\[
    \widetilde{\bS}
    \biginv{
        \widetilde{\bS}^\top
        \bA
        \widetilde{\bS}
        + \lambda \bI_q
    }
    \widetilde{\bS}^\top
    \asympequi
    \mR^{1/2}
    \biginv{
        \mR^{1/2} \bA \mR^{1/2}
        + \mu \bI_p
    }
    \mR^{1/2}
    =
    \biginv{\bA + \mu \mR^{-1} }
\]
as desired, completing the proof.
\end{proof}

Because non-isotropic sketching can be used to induce generalized ridge regularization, this can be exploited adaptively to induce a wide range of structure-promoting regularization via iteratively reweighted least squares, in a manner similar to adaptive dropout methods (see \cite{lejeune2021flipside} and references therein). Additionally, this result shows that methods applying ridge regularization to adaptive sketching methods, using for example $\mR = \mA$ as in \cite{lacotte2019adaptive}, are not equivalent to ridge regression but instead to generalized ridge regression. 

\paragraph{Free sketching}
Even among isotropic sketches, there can be a wide range of behavior beyond i.i.d.\ sketches. 
\begin{editedtwo}
It turns out that a more general result holds for \emph{free} sketching matrices (a notion from free probability that generalizes independence of random variables; see \cite{mingo2017free} for an introductory text). We state a complex version of the result in the following theorem
and defer the general extension to real arguments and investigation of properties to future work. 
\begin{theorem}[General free sketching]
    \label{thm:general-free-sketching}
    Let $\mA \in \complexset^{p \times p}$ be a positive semidefinite matrix
    and $\mS \in \complexset^{p \times q}$ be a 
    sketch such that the spectral distributions of $\mA$ and $\mS \mS^\ctransp$ converge almost surely to bounded distributions, and $\mS \mS^\ctransp$ is asymptotically free from any other matrices\footnote{\editedinlinethree{Standard zero-order freeness suffices when $p \mTheta$ has uniformly bounded operator norm. For general trace norm bounded $\mTheta$, first-order (infinitesimal) freeness~\cite{shlyakhtenko2018infinitesimal} is also required; see proof details. Unitarily invariant ensembles such as the orthogonal sketches in \Cref{cor:orthonormal-sketch} are known to satisfy all the necessary properties~\cite{cebron2022freeness}.}} with respect to the average trace $\tfrac{1}{p} \tr [\cdot]$ and has limiting S-transform $\Sxf_{\mS \mS^\ctransp}$ analytic on $\complexset^-$. Then for all $z \in \complexset^+$ there exists $\zeta \in \complexset^+$ such that
    \begin{align}
        \mS \biginv{\mS^\ctransp \mA \mS - z \mI_q} \mS^\ctransp \asympequi \biginv{\mA - \zeta \mI_p}.
    \end{align}
    Furthermore, 
    \begin{align}
        \zeta 
        \asympequi z \Sxf_{\mS \mS^\ctransp}
        \bigparen{-\tfrac{1}{p} \tr \bigbracket{\mA \biginv{\mA - \zeta \mI_p}}} 
        \quad \text{and} \quad
        \zeta \asympequi z \Sxf_{\mS \mS^\ctransp}
        \bigparen{-\tfrac{1}{p} \tr \bigbracket{\mS^\ctransp \mA \mS \biginv{\mS^\ctransp \mA \mS - z \mI_q}}}.
        \nonumber
    \end{align}
\end{theorem}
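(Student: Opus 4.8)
The plan is to mirror how \cref{thm:sketched-pseudoinverse} follows from \cref{lem:basic-ridge-asympequi}, replacing the explicit Marchenko--Pastur computation by the S-transform of the limiting spectral distribution $\mu_{\bW}$ of $\bW \defeq \mS\mS^\ctransp$. First I would record the purely algebraic push-through identities: writing $\bB \defeq \mA^{1/2}\mS$ and $\bH \defeq \bB\bB^\ctransp = \mA^{1/2}\bW\mA^{1/2}$,
\[
\mS\biginv{\mS^\ctransp\mA\mS - z\mI_q}\mS^\ctransp = \biginv{\bW\mA - z\mI_p}\bW, \qquad \mA^{1/2}\mS\biginv{\mS^\ctransp\mA\mS - z\mI_q}\mS^\ctransp\mA^{1/2} = \mI_p + z\biginv{\bH - z\mI_p},
\]
together with $\mA^{1/2}\biginv{\mA - \zeta\mI_p}\mA^{1/2} = \mI_p + \zeta\biginv{\mA - \zeta\mI_p}$. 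Since $z,\zeta \in \complexset^+$ the matrices $\mS^\ctransp\mA\mS - z\mI_q$ and $\mA - \zeta\mI_p$ are invertible regardless of the rank of $\mA$, so---unlike the real-valued setting of \cref{cor:basic-ridge-asympequi-in-r}---no $\delta$-regularization of $\mA$ is required; the self-adjoint reformulation through $\bH$, whose limiting spectral distribution is the free multiplicative convolution $\mu_{\mA}\boxtimes\mu_{\bW}$, is nonetheless what makes free-probabilistic subordination applicable.

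The core step is the deterministic equivalence $\biginv{\bW\mA - z\mI_p}\bW \asympequi \biginv{\mA - \zeta\mI_p}$ for a suitable analytic $\zeta\colon\complexset^+\to\complexset^+$. Passing to the limiting tracial $W^*$-probability space with trace $\tau = \lim\tfrac1p\tr$, the hypothesis that $\bW$ is asymptotically free from any other matrices means that its limit $w$ is free from the subalgebra $\cB$ generated by the limit of $\mA$ together with the tested directions; in particular $E_{\cB}[w] = \tau(w)\,1$, and the conditional expectation of any $\cB$-resolvent of $\mA\bW$ is governed by the operator-valued subordination for multiplicative free convolution, yielding a scalar analytic $\zeta(z)$ with $E_{\cB}\bracket{w(aw - z)^{-1}} = \biginv{a - \zeta(z)}$ (the coefficient being exactly $1$, as one checks from the expansion $w(aw - z)^{-1} = -w/z + O(z^{-2})$ using $\tau(\theta w) = \tau(\theta)\tau(w)$). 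Testing against a deterministic direction $\mTheta$ then gives $\tr\bracket{\mTheta\,\mS\biginv{\mS^\ctransp\mA\mS - z\mI_q}\mS^\ctransp} - \tr\bracket{\mTheta\biginv{\mA - \zeta\mI_p}} \asto 0$. This upgrade from the scalar law to the anisotropic equivalence is the main obstacle, and it is the reason we state only the complex version: (i) for $\mTheta$ whose spectrum need not converge one must restrict to $\mTheta$ with $\norm[\op]{p\mTheta}$ bounded, where zero-order freeness suffices, or else pass to infinitesimal freeness of $\bW$ to capture the first-order trace functional $\tr[\mTheta\,\cdot\,]$ \cite{shlyakhtenko2018infinitesimal,cebron2022freeness}; and (ii) the \emph{almost sure} (rather than in-expectation) conclusion must be deduced from the assumed a.s.\ convergence of the spectral distributions via a resolvent-to-polynomial truncation using the uniform operator-norm bounds on $\mA$ and $\bW$.

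Granting the equivalence, the characterization of $\zeta$ follows by taking normalized traces. Testing the $\mA^{1/2}$-sandwiched form against $\tfrac1p\mI_p$ gives $\tfrac1p\tr\bracket{\bH\biginv{\bH - z\mI_p}} \asympequi \tfrac1p\tr\bracket{\mA\biginv{\mA - \zeta\mI_p}}$, and since $\bH$ and $\mS^\ctransp\mA\mS$ share their nonzero spectra this common limit also equals $\lim\tfrac1p\tr\bracket{\mS^\ctransp\mA\mS\biginv{\mS^\ctransp\mA\mS - z\mI_q}}$; denote it $-s$. In terms of the moment-generating function this reads $\psi_{\bH}(1/z) = s = \psi_{\mu_{\mA}}(1/\zeta)$, so $1/\zeta = \psi_{\mu_{\mA}}^{-1}\paren{\psi_{\bH}(1/z)}$; using $\psi^{-1}(t) = \tfrac{t}{1+t}\Sxf(t)$ together with the S-transform factorization $\Sxf_{\bH} = \Sxf_{\mu_{\mA}}\Sxf_{\bW}$ for $\mu_{\mA}\boxtimes\mu_{\bW}$, this collapses to $\zeta \asympequi z\,\Sxf_{\bW}(s)$ with $s = -\tfrac1p\tr\bracket{\mA\biginv{\mA - \zeta\mI_p}}$, which is the first displayed relation, and rewriting $s$ through $\mS^\ctransp\mA\mS$ gives the second. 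Finally, for each $z\in\complexset^+$ one must show this equation has a unique solution $\zeta\in\complexset^+$: since $\mA \succeq 0$ and $\Im\zeta > 0$, the argument $s = -\tfrac1p\tr[\mA(\mA - \zeta\mI_p)^{-1}]$ lies in $\complexset^-$---exactly where $\Sxf_{\bW}$ is assumed analytic---and one checks that $\zeta \mapsto z\,\Sxf_{\bW}(s(\zeta))$ maps $\complexset^+$ (or an invariant sub-domain) into itself, whence a standard analytic fixed-point / Herglotz-function argument gives existence, uniqueness, and $\Im\zeta > 0$. Specializing to i.i.d.\ $\mS$, where $\Sxf_{\bW}(t) = \alpha/(\alpha + t)$, recovers \cref{thm:sketched-pseudoinverse}, and specializing to $\mS$ with orthonormal columns recovers \cref{cor:orthonormal-sketch}; these serve as consistency checks.
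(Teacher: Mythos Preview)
Your outline is correct and reaches the right $\zeta$ formula, but your route to the anisotropic equivalence is genuinely different from the paper's. You invoke conditional-expectation-level multiplicative subordination directly, asserting $E_{\cB}\bigl[w(aw-z)^{-1}\bigr]=(a-\zeta)^{-1}$ when $w$ is free from the subalgebra $\cB$ containing both $a$ and the test direction; granting this, the equivalence is immediate, and your subsequent derivation of $\zeta=z\,\Sxf_{\bW}(s)$ via $\Sxf_{\mu_{\mA}\boxtimes\mu_{\bW}}=\Sxf_{\mu_{\mA}}\Sxf_{\bW}$ matches the paper's. The paper instead avoids operator-valued subordination entirely by a Jacobi-formula trick: embedding $\mTheta$ as a perturbation $\mA_t=\mA+t\mTheta$, the desired trace becomes $\partial_t\big|_{t=0}\,\tfrac1p\logdet(\tmS\mA_t\tmS-z\mI)$; differentiating first in $z$ turns both sides into \emph{scalar} Cauchy transforms $G_{\mA_t\tmS^2}(z)$ and $G_{\mA_t}(\zeta)\zeta'$, and the mixed partial $\partial_t\partial_z$ is then controlled purely from the scalar identity $M_{\mA_t\tmS^2}^{\langle-1\rangle}=\Sxf_{\tmS^2}\cdot M_{\mA_t}^{\langle-1\rangle}$ by implicit differentiation, after which a short integration lemma recovers $\partial_t g=0$. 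The paper's approach is more self-contained---it needs only the scalar $S$-transform product rule and calculus---whereas yours is shorter but leans on a subordination statement whose exact form (coefficient exactly $1$, ordering $w(aw-z)^{-1}$) is not the textbook version and would need a precise citation or derivation; your leading-order expansion is a consistency check, not a proof of that identity. Both of you correctly flag infinitesimal freeness for general trace-norm $\mTheta$. One minor point: your Herglotz fixed-point argument for uniqueness is extra---the paper simply constructs $\zeta=z\,\Sxf_{\tmS^2}\bigl(M_{\mA\tmS^2}(1/z)\bigr)$ explicitly and does not claim uniqueness.
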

\begin{proof}[Proof sketch]
    The key idea of the proof is to use Jacobi's formula for a parameterized matrix: 
    $
        \frac{\partial}{\partial t} \logdet(\mB_t) =
        \tr \bigbracket{ \mB_t^{-1} \frac{\partial \mB_t}{\partial t}}
    $. 
    First we simplify by considering self-adjoint $\mTheta$
    and $\tmS = (\mS \mS^\ctransp)^{1/2}$ so that we can work entirely in dimension $p$.
    We can then define 
    $\mB_{t, \zeta} = \mA + t \mTheta - \zeta \mI_p$
    and
    $\mB_{t, z}^{\tmS} = \tmS (\mA + t \mTheta) \tmS - z \mI_p$.
    What we need to prove is that $\frac{\partial}{\partial t} \tfrac{1}{p} \logdet (\mB_{t,z}^{\tmS}) 
    \asympequi \frac{\partial}{\partial t} \tfrac{1}{p} \logdet (\mB_{t, \zeta})$ for some appropriate $\zeta$ at $t = 0$. We can eliminate the complexity introduced by $\mTheta$ by instead first differentiating with respect to $z$ and controlling the derivative with respect to $t$ using the second derivative.
    In the process, the choice of $\zeta$ presented in the statement naturally arises and can be shown to be correct using differential calculus. The details can be found in \Cref{sec:proofs-free} of the supplementary material.
\end{proof}
\end{editedtwo}

\begin{editedtwo}
That is, a more general version of \Cref{thm:sketched-pseudoinverse} holds for any $\mS$ that has the rotational invariance properties associated with freeness.  By the same reasoning as in \cref{rem:rotationally-invariant}, we expect that in the special case of $z \to 0$, free sketches will generally have the exact same first-order properties as the i.i.d.\ sketching case, since all spectral properties of $\mS \mS^\ctransp$ except the rank (sketch size) become irrelevant. 
\end{editedtwo}
\begin{edited}
In general, however, the mapping $z \mapsto \zeta$ depends 
on the spectrum of $\mS \mS^\ctransp$ and is not the same as in the i.i.d.\ sketching case.
\end{edited}

A particularly important sketching matrix that fits this broader definition is the orthogonal sketch. 
\editedinlinetwo{For example, randomized Fourier transforms are orthogonal and asymptotically free \cite{anderson2014liberating,lacotte2020optimal}.}
Unlike the i.i.d.\ sketch, an orthogonal sketch does not distort the spectrum near $q = p$ and so has less implicit regularization. We give proof details in \Cref{sec:proofs-free}.

\begin{corollary}[Orthogonal sketching]
\label{cor:orthonormal-sketch}
For $q \leq p$ with $\lim \tfrac{q}{p} = \alpha$, let $\sqrt{\tfrac{q}{p}} \mQ \in \complexset^{p \times q}$ be a Haar-distributed matrix with orthonormal columns, and let $\mA \in \complexset^{p \times p}$ be positive semidefinite with eigenvalues converging to a bounded limiting spectral measure. Then for any \editedinlinetwo{$\lambda > 0$,}
\begin{align}
    \mQ \big( \mQ^\ctransp \mA \mQ + \lambda \mI_q \big)^{-1} \mQ^\ctransp \asympequi \big( \mA + \gamma \mI_p \big)^{-1},
\end{align}
where 
$\gamma$ is the most positive solution to
\begin{align}
    \label{eq:ortho-gamma}
    \tfrac{1}{p} \tr \bracket{\inv{\mA + \gamma \mI_p}} (\gamma - \alpha \lambda) = 1 - \alpha.
\end{align}
Furthermore, for $\mu$ from \cref{thm:sketched-pseudoinverse} applied to the same $(\mA, \alpha, \lambda)$, we have $\gamma < \mu$.
\end{corollary}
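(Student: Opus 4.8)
The plan is to derive the equivalence as a special case of \cref{thm:general-free-sketching} applied to an orthonormal-columns Haar matrix, and then to obtain the comparison $\gamma < \mu$ from monotonicity of the orthogonal fixed-point function. First I would set $\mV \defeq \sqrt{q/p}\,\mQ$, which by hypothesis is a Haar-distributed matrix with orthonormal columns, so that $\mV \mV^\ctransp$ is a rank-$q$ orthogonal projection whose spectral distribution converges almost surely to the bounded measure $\nu \defeq \alpha \delta_1 + (1-\alpha)\delta_0$ (of nonzero mean $\alpha$). A one-line computation using $\mQ = \sqrt{p/q}\,\mV$ gives $\mQ \biginv{\mQ^\ctransp \mA \mQ + \lambda \mI_q} \mQ^\ctransp = \mV \biginv{\mV^\ctransp \mA \mV + \tfrac{q}{p}\lambda\, \mI_q} \mV^\ctransp$, reducing the problem to analyzing the right-hand side, whose regularization parameter tends to $\alpha\lambda$. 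Thus I would invoke \cref{thm:general-free-sketching} with sketch $\mV$ and $z = -\tfrac{q}{p}\lambda \to -\alpha\lambda$.

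Next I would verify the hypotheses of \cref{thm:general-free-sketching}: $\mA$ has a bounded limiting spectrum by assumption, $\mV \mV^\ctransp$ converges to the bounded measure $\nu$, a Haar orthonormal-columns ensemble is asymptotically free from any other matrices with respect to $\tfrac{1}{p}\tr[\cdot]$ (as recalled in the footnote to \cref{thm:general-free-sketching}), and the $S$-transform of $\nu$ — obtained from $\psi_\nu(z) = \alpha \tfrac{z}{1-z}$ by functional inversion — is $\Sxf_\nu(w) = \tfrac{1+w}{\alpha + w}$, which is analytic on $\complexset^-$ since its only pole $w = -\alpha$ is real. Because $\lambda > 0$ keeps $\mV^\ctransp \mA \mV + \tfrac{q}{p}\lambda \mI_q$ and $\mA + \gamma \mI_p$ uniformly invertible, the equivalence extends from $z \in \complexset^+$ to the real value $z = -\alpha\lambda$ by the same continuity argument ($\Im z \searrow 0$, $\Im \zeta \searrow 0$) used in the proof of \cref{cor:basic-ridge-asympequi-in-r}. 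Writing $\zeta = -\gamma$ and $g_\gamma \defeq \tfrac{1}{p}\tr[\mA \biginv{\mA + \gamma \mI_p}]$, $h_\gamma \defeq \tfrac{1}{p}\tr[\biginv{\mA + \gamma \mI_p}]$, the fixed-point relation $\zeta \asympequi z\, \Sxf_\nu\bigparen{-\tfrac{1}{p}\tr[\mA \biginv{\mA - \zeta \mI_p}]}$ becomes $\gamma \asympequi \alpha\lambda\, \tfrac{1 - g_\gamma}{\alpha - g_\gamma}$; rearranging and using $g_\gamma = 1 - \gamma h_\gamma$ produces exactly \cref{eq:ortho-gamma}, and along the way one sees $g_\gamma < \alpha$ and $\gamma > 0$ (with $\gamma = \lambda$ in the square case $\alpha = 1$).

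For the comparison, I would set $P(\gamma) \defeq h_\gamma(\gamma - \alpha\lambda) - (1-\alpha)$, so that $\gamma$ is its relevant (most positive) root, and observe that on $(0,\infty)$ the function $P$ is strictly increasing, since $\gamma h_\gamma = 1 - g_\gamma$ is increasing and $-\alpha\lambda h_\gamma$ is increasing (as $h_\gamma$ is decreasing and $\alpha\lambda > 0$). Hence it suffices to check $P(\mu) > 0$. Rewriting \cref{eq:sketched-modified-lambda} of \cref{thm:sketched-pseudoinverse} in the limit (using $q/p \to \alpha$ and $g_\mu = 1 - \mu h_\mu$) as $\alpha\lambda = \mu(\alpha - g_\mu)$, and using $\mu \geq 0$ (\cref{rem:joint-signs-lambda-mu}) together with $\lambda > 0$, forces $\mu > 0$ and $\alpha - g_\mu = \alpha\lambda/\mu > 0$. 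Then, since $\alpha\lambda h_\mu = \mu h_\mu(\alpha - g_\mu) = (1 - g_\mu)(\alpha - g_\mu)$,
\begin{align}
    P(\mu) = (1 - g_\mu) - \alpha\lambda h_\mu - (1-\alpha) = (\alpha - g_\mu) - (1 - g_\mu)(\alpha - g_\mu) = g_\mu(\alpha - g_\mu) > 0,
\end{align}
the final positivity using $g_\mu > 0$, which holds whenever the limiting spectrum of $\mA$ carries mass away from the origin (as is implicit when $\mu$ is well-defined). Monotonicity of $P$ then yields $\gamma < \mu$.

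The step I expect to be the main obstacle is the first part: carefully tracking the two normalizations (the sketch $\mQ$ versus the orthonormal-columns matrix $\mV = \sqrt{q/p}\,\mQ$, with the attendant rescaling of $\lambda$ to $\alpha\lambda$), computing $\Sxf_\nu$ and checking its analyticity on $\complexset^-$, and — most of all — justifying the analytic continuation of \cref{thm:general-free-sketching} from $\complexset^+$ to the real argument $z = -\alpha\lambda$ in the spirit of \cref{cor:basic-ridge-asympequi-in-r}. Once \cref{eq:ortho-gamma} is in hand, the inequality $\gamma < \mu$ falls out in one line from the identity $P(\mu) = g_\mu(\alpha - g_\mu)$ and the monotonicity of $P$.
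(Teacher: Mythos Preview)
Your proposal is correct and follows essentially the same approach as the paper: apply \cref{thm:general-free-sketching} to the Haar ensemble, compute the $S$-transform of the projection, and reduce the fixed-point relation to \cref{eq:ortho-gamma}. The only cosmetic differences are that you work with the orthonormal-columns matrix $\mV = \sqrt{q/p}\,\mQ$ (rescaling $\lambda \to \alpha\lambda$) whereas the paper works directly with $\mQ\mQ^\ctransp$, and for $\gamma < \mu$ you evaluate the single function $P$ at $\mu$ to obtain the neat identity $P(\mu) = g_\mu(\alpha - g_\mu) > 0$, while the paper compares the right-hand sides of the two fixed-point equations pointwise via $h_x < 1/x$; both arguments are equivalent monotonicity comparisons.
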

\begin{proof}
Firstly, $\mQ \mQ^\ctransp$ and $\mA$ are almost surely asymptotically free~\cite[Theorem 4.9]{mingo2017free}.
\begin{editedtwo}
We can therefore apply \Cref{thm:general-free-sketching}. It is straightforward to obtain the analytic limiting S-transform $\Sxf_{\mQ \mQ^\ctransp}(w) = \frac{\alpha (1 + w)}{\alpha + w}$, from which we can obtain \cref{eq:ortho-gamma} from the equation $\gamma = \lambda \Sxf_{\mQ \mQ^\ctransp}(-\tfrac{1}{p} \tr [\mA \biginv{\mA + \gamma \mI_p}])$. That is, if we take $z \to -\lambda$, which is a well defined limit for $\Im(z) \searrow 0$ for any $\lambda > 0$, we have $\zeta \asympequi -\gamma$.
\end{editedtwo}
To see that $\gamma < \mu$, observe that we can write \cref{eq:sketched-modified-lambda} and \cref{eq:ortho-gamma} as 
\begin{align}
    \tfrac{\mu}{p} \tr \bracket{\big(\mA + \mu \mI_p \big)^{-1}} &= 1 - \alpha + \frac{\alpha \lambda}{\mu}, \\
    \tfrac{\gamma}{p} \tr \bracket{\big(\mA + \gamma \mI_p \big)^{-1}} &= 1 - \alpha + \alpha \lambda \tfrac{1}{p} \tr \bracket{\big(\mA + \gamma \mI_p \big)^{-1}}.
\end{align}
The left-hand sides of these two equations are the same increasing function of $\mu$ and $\gamma$, respectively, while the right-hand sides are decreasing functions, with the function of $\mu$ being strictly greater than the function of $\gamma$, since $\tfrac{1}{p} \tr \bracket{\big(\mA + \mu \mI_p \big)^{-1}} < \tfrac{1}{\mu}$ for $\mu > 0$. This means that the intersection with the decreasing function for $\gamma$ must occur for a smaller value than the intersection for $\mu$, proving the claim.
\end{proof}

In the statement, $\gamma < \mu$ means that the orthogonal sketch has less effective regularization than the i.i.d.\ sketch. For settings in which we desire to solve a linear system with as little distortion as possible, we therefore would much prefer an orthogonal sketch to an i.i.d.\ sketch, especially for $q \approx p$.
\editedinlinetwo{With additional work, one could extend this result to negative regularization as we have done in the i.i.d.\ sketching case. We leave it for future work.}

\begin{figure}[t]
    \label{fig:practical-concentration}
    \centering
    \includegraphics[width=6in]{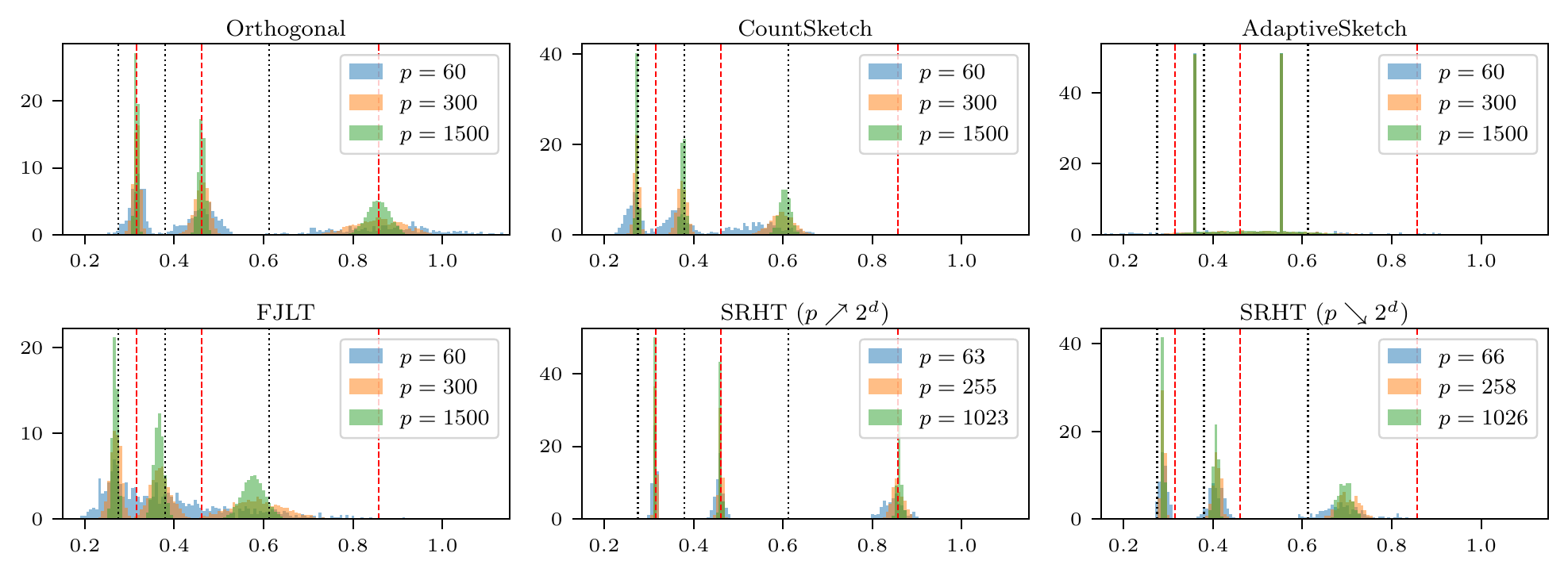}
    \caption{
    Empirical density histograms over 20 trials demonstrating the concentration of diagonal elements of  $\mS \inv{\mS^\transp \mA \mS + \lambda \mI} \mS^\transp$ for $\mA$ as in \cref{fig:empirical-concentration} with $q \approx 0.8 p$, $\lambda = 1$ and several normalized sketches $\mS$ commonly used in practice. We also plot the diagonals of the i.i.d.\ sketching equivalence $\inv{\mA + \mu \mI}$ (black, dotted) and the 
    orthogonal sketching equivalence $\inv{\mA + \gamma \mI}$ 
    from 
    \editedinlinetwo{\cref{cor:orthonormal-sketch}} 
    (red, dashed), where $\mu \approx 1.63$ and $\gamma \approx 1.17$.}
\end{figure}

In \cref{fig:practical-concentration}, we repeat the experiment from \cref{fig:empirical-concentration} for a variety of normalized non-i.i.d.\ sketches used frequently in practice. Both CountSketch \cite{charikar2002frequent} and the fast Johnson--Lindenstrauss transform (FJLT) \cite{ailon2009fast} behave similarly to i.i.d.\ sketching, with the FJLT slightly over-regularizing. As predicted by \cref{cor:sketched-pseudoinverse-noniso}, adaptive sketching with $\mR = \mA$ \cite{lacotte2019adaptive} behaves very differently from the other sketches, showing only two point masses instead of three since $\mA^{-1}$ is not well-defined for its eigenvalues of 0. Lastly, the subsampled randomized Hadamard transform (SRHT) \cite{tropp2011hadamard} is an orthogonal version of the FJLT, and our experiment elucidates the effect of zero padding on the Hadamard transform of the SRHT. The \editedinlinethree{fast} Hadamard transform is defined only for powers of 2, so for other dimensions, the common approach is to simply zero-pad the data to the nearest power of 2. However, from this experiment we can see that this zero-padding can have a significant impact on the effective regularization; for $p$ slightly smaller than a power of 2, the SRHT performs almost identically to an orthogonal sketch as expected. %
However, for $p$ slightly larger than a power of 2, there is significant effective regularization induced, even though the sketch is still norm-preserving.
\editedinlinetwo{This is because zero padding changes the spectrum, so the $S$-transform deviates from the orthogonal case.}

Our proposed framework of first- and second-order equivalence promises to provide a principled means of comparison of different sketching techniques. Once $\zeta$ from \cref{thm:general-free-sketching} can be determined for a given sketch (which depends on its spectral properties), an analogous result to \cref{thm:second-order-sketch} will directly follow to yield inflation with a factor of $\zeta'$. Armed with both $\zeta$ and $\zeta'$ for a collection of sketches, we can compare them using these bias and variance-style decompositions and make principled choices analogously to classical estimation techniques. \editedinline{Our best guidance to practitioners from the insights presented in this work would be to apply a fast sketch with an isotropic spectrum to minimize computation time and distortion, such as the SRHT, but to be aware of issues arising from zero-padding; for this reason we suggest that other Fourier transforms be used instead of the \editedinlinethree{standard fast} Hadamard transform.}

\paragraph{Future work}

As alluded to in the introduction, 
the first- and second-order equivalences developed in this work can be used directly to analyze the asymptotics of the predicted values and quadratic errors of sketched ridge regression. 
We leave a complete detailed analysis of sketched ridge regression for a companion paper,
in which we use the results in this work to study both primal (observation-side) and dual (feature-side) sketching of the data matrix, 
as well as joint primal and dual sketching. 
We believe that our results can also be combined with the techniques in \cite{liao2021hessian} who obtain deterministic equivalents for the Hessian of generalized linear models,
enabling precise asymptotics for the implicit regularization due to sketching in nonlinear prediction models such as classification with logistic regression.

\section*{Acknowledgments}
We are grateful to Arun Kumar Kuchibhotla, Alessandro Rinaldo, Yuting Wei, Jin-Hong Du, 
and other members of the Operational Overparameterized Statistics (OOPS) Working Group 
at Carnegie Mellon University for helpful conversations.
We are also grateful to Edgar Dobriban, \editedinline{Mert Pilanci}, \editedinlinethree{Benson Au, Elad Romanov, and Dimitri Shlyakhtenko},
as well as participants of the Deep Learning ONR MURI seminar series 
for useful discussions and feedback on this work. \editedinline{We thank the anonymous reviewers for their thoughtful suggestions which have strengthened this work,
and the associate editor for the swift review process.}

This work was sponsored by Office of Naval Research MURI grant N00014-20-1-2787. 
DL, HJ, and RGB were also supported by NSF grants CCF-1911094, IIS-1838177, and IIS-1730574; ONR grants N00014-18-12571 and N00014-20-1-2534; AFOSR grant FA9550-22-1-0060; and a Vannevar Bush Faculty Fellowship, ONR grant N00014-18-1-2047.  DL was partially supported by ARO grant 2003514594.

\clearpage

\clearpage
\setcounter{section}{0}
\setcounter{equation}{0}
\setcounter{figure}{0}
\renewcommand{\thesection}{SM\arabic{section}}
\renewcommand{\theequation}{SM\arabic{section}.\arabic{equation}}
\renewcommand{\thefigure}{SM\arabic{figure}}

\thispagestyle{empty}
\begin{center}
\noindent\textcolor{header1}{\textbf{SUPPLEMENTARY MATERIALS: \textsf{Asymptotics of the Sketched Pseudoinverse}}}
\color{gray}\rule{\textwidth}{4pt}
\end{center}
\medskip

This document serves as a supplement to the paper
``Asymptotics of the Sketched Pseudoinverse.''
The contents of this supplement are organized as follows.
In \Cref{sec:useful-facts},
we collect some useful facts regarding Stieltjes transforms
that are used in some of the proofs in later sections.
In \Cref{sec:proof:cor:basic-ridge-asympequi-in-r},
we provide a detailed proof for \Cref{cor:basic-ridge-asympequi-in-r}.
In \Cref{sec:proof:thm:sketched-pseudoinverse},
we provide proof 
for \cref{thm:sketched-pseudoinverse}.
In \Cref{sec:proofs-properties},
we provide proofs of various properties regarding our main equivalences
mentioned \Cref{sec:properties} in the main paper.
\editedinline{In \Cref{sec:proofs:applications}, we give proofs for the application of our equivalence to the sketch-and-project method.}
\editedinlinetwo{Finally, in \Cref{sec:proofs-free}, we give proof for \Cref{thm:general-free-sketching} which extends our results to free sketching.}

\section{Useful facts}
\label{sec:useful-facts}

In this section,
we jot down basic definitions and facts
related Stieltjes transform that we will be using
throughout the paper.

Let $Q$ be a bounded nonnegative measure on $\RR$.
The Stieltjes transform of $Q$ is defined at $z \in \CC^{+}$
by
\[
    m_Q(z) = \int_{\RR} \frac{1}{x - z} \, \mathrm{d}Q(x).
\]

\begin{fact}
    \label{fact:lim_stiletjes_im}
    Let $m$ be the Stieltjes transform of bounded measure $Q$ on $\RR_{\ge 0}$.
    Let $z \in \CC^{+}$ with $\Re(z) < 0$.
    Then, $\Im(m(z)) \searrow 0$ as $\Im(z) \searrow 0$.
\end{fact}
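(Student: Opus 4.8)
The plan is to reduce the claim to an elementary estimate on the imaginary part of the integrand defining $m$. First I would write $z = a + ib$ with $a = \Re(z) < 0$ and $b = \Im(z) > 0$, and record the identity
\[
    \Im\!\left( \frac{1}{x - z} \right)
    = \Im\!\left( \frac{(x - a) + ib}{(x - a)^2 + b^2} \right)
    = \frac{b}{(x - a)^2 + b^2},
\]
valid for all $x \in \RR$. Integrating against $Q$ and using linearity of the integral, this gives $\Im(m(z)) = \int_{\RR_{\ge 0}} \frac{b}{(x - a)^2 + b^2}\, \mathrm{d}Q(x)$, which is manifestly nonnegative for every $z \in \CC^{+}$ with $\Re(z) < 0$.

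Next I would exploit the support assumption on $Q$: since $Q$ is carried by $\RR_{\ge 0}$ and $a < 0$, for every $x$ in the support we have $x - a \ge -a = |a| > 0$, so $(x - a)^2 \ge a^2 > 0$. Consequently the integrand admits the uniform bound $0 \le \frac{b}{(x - a)^2 + b^2} \le \frac{b}{a^2}$, and hence
\[
    0 \le \Im(m(z)) \le \frac{b}{a^2}\, Q(\RR_{\ge 0}).
\]
Because $Q$ is a bounded measure, $Q(\RR_{\ge 0}) < \infty$; fixing $\Re(z) = a$ and letting $\Im(z) = b \searrow 0$ then forces the right-hand side, and therefore $\Im(m(z))$, down to $0$. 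Together with the nonnegativity established above, this is exactly the assertion $\Im(m(z)) \searrow 0$.

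I do not anticipate any real obstacle in this argument; the only point deserving attention is that it genuinely uses the hypothesis that $Q$ places no mass on the negative half-line, which is what prevents $(x - a)^2$ from approaching $0$ and makes the bound on $\Im(m(z))$ vanish as $b \searrow 0$. If one prefers to bypass the explicit quantitative bound, the same conclusion follows from dominated convergence: the pointwise limit of the integrand is $0$ since $(x-a)^2 \ge a^2 > 0$, and for $b \le 1$ the AM--GM inequality $(x-a)^2 + b^2 \ge 2b\,|x - a|$ yields the $b$-independent dominating function $\tfrac{1}{2|x-a|} \le \tfrac{1}{2|a|} \in L^1(Q)$.
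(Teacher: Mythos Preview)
Your proof is correct and follows essentially the same approach as the paper: both compute $\Im(m(z)) = \int \frac{b}{(x-a)^2 + b^2}\,\mathrm{d}Q(x)$, bound the integrand by $\frac{b}{a^2}$ using $(x-a)^2 \ge a^2$ on the support of $Q$, and conclude from the boundedness of $Q$. Your write-up is slightly more detailed (noting nonnegativity explicitly and offering the dominated-convergence alternative), but the argument is the same.
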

\begin{proof}
    Let $z = x + i y$ with $x < 0$ and $y > 0$.
    Since $m$ is a Stieltjes transform of $Q$,
    we have
    \[
        \Im(m(z)) 
        = \Im \left( \int \frac{1}{r - z} \, \mathrm{d}Q(r) \right)
        = \Im \left( \int \frac{1}{r - (x + i y)} \, \mathrm{d}Q(r) \right)
        = \int \frac{y}{(r - x)^2 + y^2} \, \mathrm{d}Q(r).
    \]
    Thus,
    we can bound
    \[
        | \Im(m(z)) |
        \le \frac{y}{x^2} \int \mathrm{d}Q(r).
    \]
    Since $Q$ is a bounded measure, by letting $y \searrow 0$, one has $\Im(m(z)) \searrow 0$ 
    as $\Im(z) \searrow 0$.
\end{proof}

We will be interested in the Stieltjes transforms of spectral measures.
The spectral distribution of a symmetric matrix $\bA \in \CC^{p \times p}$
with eigenvalues $\lambda_1(\bA), \dots, \lambda_p(\bA)$
is the probability distribution that places a point mass of $\tfrac{1}{p}$ at each
eigenvalue
\[
    F_\mA(\lambda) = \tfrac{1}{p} \sum_{i=1}^{p} \1\{ \lambda_i \le \lambda \}.
\]
The matrices of interest for us will be the population covariance matrix 
$\bSigma \in \CC^{p \times p}$
and the sample covariance matrix 
$\tfrac{1}{n} \bX^\ctransp \bX$ where $\bX \in \CC^{n \times p}$ is the random design matrix.

If the Stieltjes transform 
of spectrum of the sample covariance matrix $\tfrac{1}{n} \mX^\ctransp \mX$
is 
\begin{equation}
    \label{eq:stieltjes-transform}
    m(z) = \tfrac{1}{p} \tr[(\tfrac{1}{n} \mX^\ctransp \mX - z \mI_p)^{-1}],
\end{equation}
then the so-called \emph{companion} Stieltjes transform 
\begin{equation}
    \label{eq:companion-stieltjes-transform}
    v(z) = \tfrac{1}{n} \tr[(\tfrac{1}{n} \mX \mX^\ctransp - z \mI_n)^{-1}] 
\end{equation}
is the Stieltjes transform of $\tfrac{1}{n} \mX \mX^\ctransp$
(and hence the prefix).
The reason it is useful is that
it is often easier to work with the companion Stieltjes transform
than the Stieltjes transform.
The following fact relates the companion Stieltjes transform
to the Stieltjes transform.

\begin{fact}
\label{fact:stieltjes-companion-stieltjes-relation}
The companion Stieltjes transform $v(z)$ 
can be expressed in terms of the Stieltjes transform $m(z)$ at $z \in \CC^{+}$ as
\begin{equation}
    \label{eq:stieltjes-companion-stieltjes-relation}
    v(z) = \frac{p}{n} m(z) + \frac{1}{z} \left(\frac{p}{n} - 1\right).
\end{equation}
\end{fact}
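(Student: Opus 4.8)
The plan is to reduce everything to the elementary fact that $\tfrac{1}{n}\mX^\ctransp\mX$ and $\tfrac{1}{n}\mX\mX^\ctransp$ share the same nonzero eigenvalues with identical multiplicities, differing only in how many zero eigenvalues each carries. First I would set $M = \tfrac{1}{n}\mX^\ctransp\mX \in \CC^{p\times p}$ and $N = \tfrac{1}{n}\mX\mX^\ctransp \in \CC^{n\times n}$, let $k = \mathrm{rank}(\mX) \le \min\set{n,p}$, and write $\lambda_1, \dots, \lambda_k > 0$ for the common nonzero eigenvalues counted with multiplicity; the map $\vu \mapsto \mX\vu$ carries an eigenvector of $M$ with eigenvalue $\lambda > 0$ to an eigenvector of $N$ with the same eigenvalue and restricts to a bijection between the corresponding eigenspaces, which is the one substantive input. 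Since $z \in \CC^+$, neither $M - z\mI_p$ nor $N - z\mI_n$ is singular, so $m(z)$ and $v(z)$ in \cref{eq:stieltjes-transform,eq:companion-stieltjes-transform} are well defined.

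Next I would expand each trace as a sum over eigenvalues, using that a zero eigenvalue contributes $(0 - z)^{-1} = -\tfrac{1}{z}$ to the resolvent trace, so that
\begin{equation*}
    p\,m(z) = \sum_{i=1}^{k} \frac{1}{\lambda_i - z} - \frac{p - k}{z}, \qquad n\,v(z) = \sum_{i=1}^{k} \frac{1}{\lambda_i - z} - \frac{n - k}{z}.
\end{equation*}
Subtracting the first identity from the second cancels the common sum, leaving $n\,v(z) - p\,m(z) = \tfrac{p - n}{z}$. Dividing by $n$ and rearranging yields precisely $v(z) = \tfrac{p}{n}\,m(z) + \tfrac{1}{z}\bigl(\tfrac{p}{n} - 1\bigr)$, which is \cref{eq:stieltjes-companion-stieltjes-relation}.

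There is no real obstacle in this argument; the only point requiring a little care is the bookkeeping of zero eigenvalues, namely that $M$ has exactly $p - k$ of them and $N$ exactly $n - k$, so that this difference accounts for the entire discrepancy between the two traces. If desired, since both sides are rational functions of $z$, the identity extends by analytic continuation to every $z$ at which both resolvents are defined, although the statement as phrased concerns only $z \in \CC^+$.
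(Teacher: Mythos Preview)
Your proof is correct and follows essentially the same approach as the paper: both expand the resolvent traces over the shared nonzero eigenvalues of $\tfrac{1}{n}\mX^\ctransp\mX$ and $\tfrac{1}{n}\mX\mX^\ctransp$, account for the differing numbers of zero eigenvalues, and combine the resulting expressions. The paper's proof is slightly terser, simply stating that the nonzero eigenvalues coincide rather than justifying it via the map $\vu \mapsto \mX\vu$, but the argument is otherwise identical.
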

\begin{proof}
    Let $(\lambda_i)_{i = 1}^r$ be the nonzero eigenvalues of 
    $\tfrac{1}{n} \bX^\ctransp \bX$
    (which are also the nonzero eigenvalues of $\tfrac{1}{n} \bX \bX^\ctransp$).
    Define $\Lambda(z) = \sum_{i=1}^r \tfrac{1}{\lambda_i - z}$.
    From \cref{eq:stieltjes-transform,eq:companion-stieltjes-transform},
    note that we can write
    \begin{align}
        m(z) = \frac{\Lambda(z)}{p} - \frac{(p - r)}{pz}, \quad
        v(z) = \frac{\Lambda(z)}{n} - \frac{(n - r)}{nz}.
    \end{align}
Combining these equations proves the claim.
\end{proof}

\section{Proof of \Cref{cor:basic-ridge-asympequi-in-r}}

\label{sec:proof:cor:basic-ridge-asympequi-in-r}

As a preliminary that we will need later, through a standard argument,
we will first show that
$\Im(c(z)) \nearrow 0$ as $\Im(z) \searrow 0$
in \cref{eq:basic-ridge-asympequi-in-r}
for $z \in \CC^{+}$ with $\Re(z) < 0$.
To proceed, 
denote $\tfrac{1}{p} \tr\bracket{\bSigma (c(z) \bSigma - z \bI_p)^{-1}}$ by $d(z)$.
From the last part of \cref{lem:basic-ridge-asympequi},
$d(z)$ is a Stieltjes transform of a certain positive measure on $\RR_{\ge 0}$
with total mass $\tfrac{1}{p} \tr[\bSigma]$.
Since the operator norm of $\bSigma$ is uniformly bounded in $p$,
we have that $\tfrac{1}{p} \tr[\bSigma]$ is bounded above by some constant independent of $p$.
Combining this with \cref{fact:lim_stiletjes_im},
we have that $\Im(d(z)) \searrow 0$ as $\Im(z) \searrow 0$
for $z \in \CC^{+}$ with $\Re(z) < 0$.
Now manipulating \cref{eq:basic-ridge-fp-in-c},
we can write
\begin{equation}
    \label{eq:c-in-d}
    c(z)
    = \frac{1}{1 + \tfrac{p}{n} d(z)}.
\end{equation}
Thus,
we can conclude that $\Im(c(z)^{-1}) \searrow 0$ as $\Im(z) \searrow 0$
for $z \in \CC^{+}$ with $\Re(z) < 0$.
This in turn implies that $\Im(c(z)) \nearrow 0$
for $z \in \CC^{+}$ with $\Re(z) < 0$.

We now begin the proof.

\begin{proof}
We start by considering $z \in \complexset^+$.
To obtain \cref{eq:basic-ridge-asympequi-in-r}, we multiply both sides of \cref{eq:basic-ridge-asympequi-in-c} by $z$:
\begin{align}
    z \big( \tfrac{1}{n} \bX^\ctransp \bX - z \bI_p \big)^{-1}
    &\asympequi z \inv{c(z) \bSigma -z \bI_p} \\
    &= \tfrac{z}{c(z)} \biginv{\mSigma - \tfrac{z}{c(z)} \mI_p} \label{eq:resolvent-symmetric-pre}.
\end{align}
We will let $\zeta = \tfrac{z}{c(z)}$ shortly. 
First let 
$m(z) = \tfrac{1}{p} \tr \bigbracket{\inv{c(z) \mSigma - z \mI_p}}$.
By an additional application of \cref{lem:basic-ridge-asympequi}, 
$m(z)$ is asymptotically equal to $\tfrac{1}{p} \tr \bigbracket{\inv{\tfrac{1}{n} \mX^\ctransp \mX - z \bI}}$, the Stieltjes transform of the spectrum of $\tfrac{1}{n} \mX^\ctransp \mX$. 
Now note that we can write \cref{eq:basic-ridge-fp-in-c} in terms of $m(z)$ as
\[
    \tfrac{1}{c(z)} - 1 = \tfrac{p}{n} m(z).
\]
We can manipulate the equation in the display above into the following form:
\begin{align}
   - \frac{c(z)}{z} = \tfrac{p}{n}  m(z) + \frac{1}{z} \left(\tfrac{p}{n} - 1\right).
\end{align}
From the relationship between Stieltjes and the companion Stieltjes transforms in 
\cref{fact:stieltjes-companion-stieltjes-relation},
this means that $-\tfrac{c(z)}{z}$ is asymptotically equal to $v(z) = \tfrac{1}{n} \tr \bracket{\inv{\tfrac{1}{n} \mX \mX^\ctransp - z \bI}}$, the companion Stieltjes transform
of the spectrum of $\tfrac{1}{n} \bX^\ctransp \bX$.
Thus, letting $\zeta = \tfrac{z}{c(z)}$ in \cref{eq:resolvent-symmetric-pre}, 
we have that
\[
    z \big( \tfrac{1}{n} \bX^\ctransp \bX - z \bI_p \big)^{-1} 
    \asympequi \zeta (\bSigma - \zeta \bI_p)^{-1},
\]
and that asymptotically,
$\zeta = -\tfrac{1}{v(z)}$ is the unique solution in $\complexset^+$ to \cref{eq:basic-ridge-fp-in-r} for $z \in \complexset^+$.
Moreover, through analytic continuation,
one can extend this relationship to the real line
outside the support of the spectrum of $\tfrac{1}{n} \bX \bX^\ctransp$
where by the similar argument as for $c(z)$ above,
both $v(z)$ and $\zeta$ are real.

It remains to determine the interval for which the analytic continuation coincides with a unique solution to \cref{eq:basic-ridge-fp-in-r} for a given $z$. 
Let $z_0 \in \reals$ denote the most negative zero of $v$. Then for all $z < z_0$, $\zeta \in \reals$ is well-defined, asymptotically being a solution to
\begin{align}
    \label{eq:proof:zeta-fp-in-r}
    z - \zeta = -\zeta \tfrac{1}{n} \tr \bracket{\mSigma \inv{\mSigma - \zeta \mI_p}},
\end{align}
which is an algebraic manipulation of \cref{eq:basic-ridge-fp-in-c}. However, as we will now show, the solution to this equation is not in general unique, so we will show that the most negative solution for $\zeta$ is the correct analytic continuation of the corresponding solution in $\complexset^+$.

\begin{figure}[t]
    \centering
    \includegraphics[width=\linewidth]{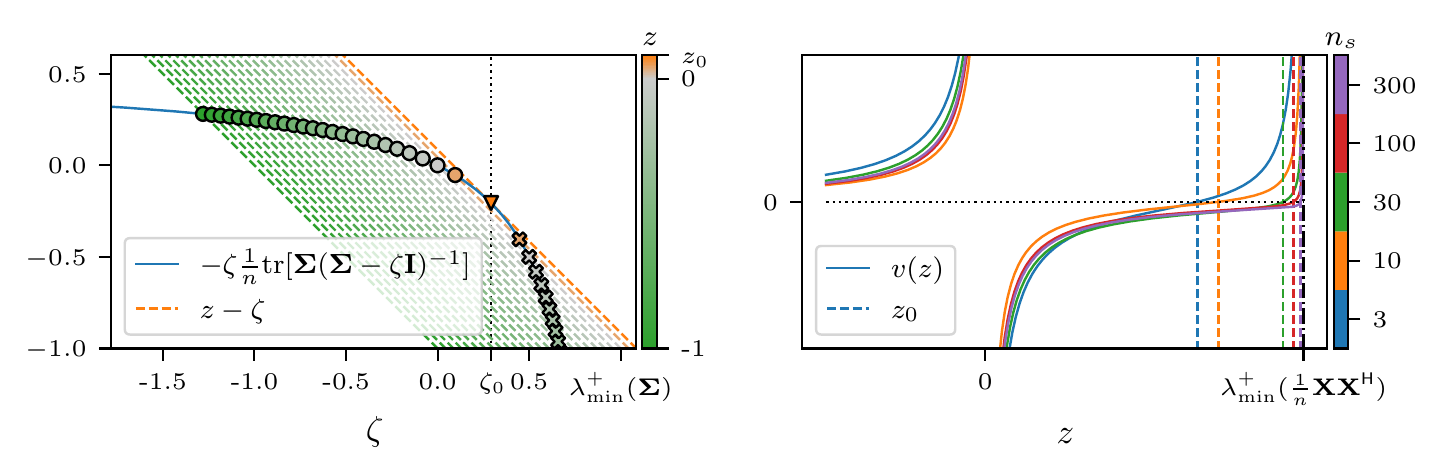}
    \caption{\textbf{Left:} Numerical illustration of the solutions to \cref{eq:proof:zeta-fp-in-r} for $\mSigma = \mI$ and $\tfrac{p}{n} = \tfrac{1}{2}$. 
    The right-hand side of \cref{eq:proof:zeta-fp-in-r} is a fixed function of $\zeta$ (blue, solid), 
    but the left-hand-side is a line with slope $-1$ shifted by $z$ (orange to green, dashed). 
    Solutions are the most negative intersections of the curves (circles), and not the most positive intersections (x's). The greatest possible value of $z$ yielding an intersection, $z = z_0$ (triangle), gives $\zeta = \zeta_0$ (dotted). For this example, we know that $z_0 = (1 - \sqrt{\tfrac{p}{n}})^2 \approx 0.0858$ since the spectrum of $\tfrac{1}{n} \mX \mX^\ctransp$ follows the Marchenko--Pastur distribution.
    \textbf{Right:} Illustration of the convergence of $z_0$ to $\lambdaminnz(\tfrac{1}{n} \mX \mX^\ctransp)$. For $\mSigma = \mI$, $p = 500$, $n = 1000$, we draw a random $\tfrac{1}{n} \mX \mX^\ctransp$ and compute its eigenvalues. To simulate increasing the dimensionality of the matrix while keeping $\lambdaminnz(\tfrac{1}{n} \mX \mX^\ctransp)$ fixed, we then take a subsample of size $n_s$ of the eigenvalues, comprised of $\lambdaminnz(\tfrac{1}{n} \mX \mX^\ctransp)$ and $n_s - 1$ other eigenvalues chosen uniformly at random. We then plot $v(z)$ (solid) using this subsample. For any finite $n_s$, $z_0$ (dashed) will always lie between $0$ and $\lambdaminnz(\tfrac{1}{n} \mX \mX^\ctransp)$, but $z_0$ approaches $\lambdaminnz(\tfrac{1}{n} \mX \mX^\ctransp)$ as $n_s$ tends to infinity.
    }
    \label{fig:zeta_z_proof_illustration}
\end{figure}

Consider the two sides of \cref{eq:proof:zeta-fp-in-r}. The left-hand side is linear in $\zeta$, and the right-hand side is concave for $\zeta < \lambdaminnz(\mSigma)$. To see this, observe that
\begin{align}
    \tfrac{\partial^2}{\partial \zeta^2} \paren{-\zeta \tfrac{1}{n} \tr \bracket{\mSigma \inv{\mSigma - \zeta \mI_p}}}
    &= \tfrac{\partial}{\partial \zeta} \paren{- \tfrac{1}{n} \tr \bracket{\mSigma \inv{\mSigma - \zeta \mI_p}} - \zeta \tfrac{1}{n} \tr \bracket{\mSigma \paren{\mSigma - \zeta \mI_p}^{-2}}} \\
    &= \tfrac{\partial}{\partial \zeta} \paren{-  \tfrac{1}{n} \tr \bracket{\mSigma^2 \paren{\mSigma - \zeta \mI_p}^{-2}}} \\
    &= -\tfrac{2}{n} \tr \bracket{\mSigma^2 \paren{\mSigma - \zeta \mI_p}^{-3}} < 0.
\end{align}
A linear function and a concave function can intersect at zero, one, or two points. If at one point, this must occur at the unique point $(z_1, \zeta_1)$, $\zeta_1 < \lambdaminnz(\mSigma)$ for which the derivatives of each side of \cref{eq:proof:zeta-fp-in-r} coincide, satisfying
\begin{align}
    1 = \tfrac{1}{n} \tr \bracket{ \mSigma^2 \paren{\mSigma - \zeta_1 \mI_p}^{-2} }.
\end{align}
This right-hand side of this equation sweeps the range $(0, \infty)$ for $\zeta_1 \in (-\infty, \lambdaminnz(\mSigma))$, so such a $(z_1, \zeta_1)$ always exists.
Furthermore, since the solutions $\zeta$ are continuous as a function $z$, the analytic continuation of the complex solution to the reals of the map $z \mapsto \zeta$ with domain $(-\infty, z_1)$ must have image of either $(-\infty, \zeta_1)$ or $(\zeta_1, \lambdaminnz(\mSigma))$.
The correct image must be $(-\infty, \zeta_1)$, which we illustrate in \cref{fig:zeta_z_proof_illustration} (left). 

To see why this must be the correct image, consider $z = x + i \varepsilon$ for a fixed $\varepsilon > 0$ with $x$ very negative.
Rewriting \cref{eq:proof:zeta-fp-in-r}, we have the form of \cref{eq:basic-ridge-fp-in-r}:
\begin{align}
    \label{eq:proof:zeta-fp-all-on-right-hand-side}
    z = \zeta \paren{1 - \tfrac{1}{n} \tr \bracket{\mSigma \inv{\mSigma - \zeta \mI_p}}}.
\end{align}
We begin by considering the behavior of the trace term.
Let $\zeta = \chi + i \xi$, and suppose that $\chi < \tfrac{x}{2}$, which means that $\chi$ is also very negative. The trace is a sum of terms of the form
\begin{align}
    \frac{\sigma}{\sigma - \zeta} = \frac{\sigma (\sigma - \chi + i \xi)}{(\sigma - \chi)^2 + \xi^2}.
\end{align}
Let $g(\zeta)$ and $h(\zeta)$ denote the real and imaginary parts of $\tfrac{1}{n} \tr \bracket{\mSigma \inv{\mSigma - \zeta \mI_p}}$.
For $x$ (and therefore $\chi$) sufficiently negative, this gives us the simple bounds
\begin{align}
    \left| g(\zeta) \right| 
    &\leq \frac{\tfrac{p}{n} \sigma_{\max}(\mSigma)}{- \chi} 
    \leq \frac{2 \tfrac{p}{n} \sigma_{\max}(\mSigma)}{- x}, \\
    \left| h(\zeta) \right| 
    &\leq \frac{\tfrac{p}{n} \sigma_{\max}(\mSigma) \xi}{\chi^2}
    \leq \frac{4 \tfrac{p}{n} \sigma_{\max}(\mSigma) \xi}{x^2}.
\end{align}
We therefore have by \cref{eq:proof:zeta-fp-all-on-right-hand-side} that
\begin{align}
    \chi = \frac{x (1 - g(\zeta)) + \varepsilon h(\zeta)}{(1 - g(\zeta))^2 + h(\zeta)^2}, \quad
    \xi = \frac{\varepsilon (1 - g(\zeta)) - x h(\zeta)}{(1 - g(\zeta))^2 + h(\zeta)^2}.
\end{align}
By our bounds on $g$ and $h$, we can conclude that for sufficiently negative $x$, there exists $a > 0$ and $0 < b < 1$ such that $ |\xi| \leq a \varepsilon + b |\xi| $, implying that $|\xi| \leq \tfrac{a \varepsilon}{1 - b}$, and therefore $|\xi|$ is bounded. Since $|\xi|$ is bounded, $|h(\zeta)|$ has an upper bound of the form $\tfrac{1}{x^2}$, so for any $c \in (\tfrac{1}{2}, 1)$ and sufficiently negative $x$, we have the bound $\chi \leq c x$. Therefore, we can confirm that our supposition that $\chi < \tfrac{x}{2}$ leads to the unique solution with $\xi > 0$, since for any $c' \in (0, 1)$ we similarly have $\xi > c' \varepsilon > 0$ for sufficiently negative $x$. One can similarly argue that for solutions with $\chi \nearrow \lambdaminnz(\mSigma)$, it must be that $\xi < 0$, which is the solution in the wrong half-plane. By continuity of $z \mapsto \zeta$, identifying these extreme cases is sufficient to identify the correct image.
Therefore, for real-valued $z < z_1$, the correct $\zeta$ is the most negative solution, which is the unique $\zeta < \zeta_1$, and $\zeta$ is undefined for $z > z_1$.

Lastly, we argue that asymptotically, $z_0 = z_1$. In the case $n < p$, this is straightforward, as the most negative zero of $v$ must lie between the two most negative distinct eigenvalues of $\frac{1}{n} \mX \mX^\ctransp$. This is because there is a pole at each distinct eigenvalue, so the entire range $(-\infty, \infty)$ (including crossing $0$) is mapped to by $v$ between each successive pair of distinct eigenvalues. When $n < p$, there is not a point mass at $0$, so these two most negative eigenvalues must converge to the same value as the discrete eigenvalue distribution converges to a continuous distribution, and this value marks the beginning of the continuous support of the spectrum of $\frac{1}{n} \mX \mX^\ctransp$, so $z_0 \to \lambda_{\min}(\frac{1}{n} \mX \mX^\ctransp)$. Moreover, $\zeta$, being asymptotically equal to $\tfrac{1}{v}$, is undefined only on the support of the limiting spectrum and continuous elsewhere; therefore by the argument in the previous paragraph, the solution to \cref{eq:proof:zeta-fp-in-r} does not exist for $z > z_1$, and it must be that $\lambda_{\min}(\frac{1}{n} \mX \mX^\ctransp) \to z_1$.

For $n > p$, we apply similar reasoning; however, we must take care to consider the point mass of the spectrum at 0. This means that $z_0 \in (0, \lambdaminnz(\frac{1}{n} \mX \mX^\ctransp))$, because like before, the first zero must lie between the two most negative distinct eigenvalues, as we illustrate in \cref{fig:zeta_z_proof_illustration} (right).
However, asymptotically, it must be that $z_0 \nearrow z_1 = \lambdaminnz(\frac{1}{n} \mX \mX^\ctransp)$. 
This is most easily seen by a contradiction argument. Suppose
we have $z_0 < z_1 - \varepsilon$ for some $\varepsilon > 0$.
Because $\tfrac{1}{v}$ has a pole at $z_0$, $-\zeta = \tfrac{1}{v} \nearrow \infty$ as $z \searrow z_0$. In particular, this means that $\tfrac{1}{v}$ is discontinuous at $z_0$, tending to $\infty$ from the right. 
Meanwhile, as argued above, $\lambdaminnz(\frac{1}{n} \mX \mX^\ctransp) \to z_1$, and we know that for $z < z_1$, $\zeta < \zeta_1 \in (-\infty, \lambdaminnz(\mSigma))$.
This is a contradiction, because on the one hand $\zeta$ is upper bounded by $\lambdaminnz(\mSigma)$ for any $z \in (z_0, z_1)$, but on the other hand $\tfrac{1}{v}$ can be made arbitrarily large by taking $z \searrow z_0$.
Therefore, we must have, asymptotically, that $z_0 = z_1 = \lambdaminnz(\frac{1}{n} \mX \mX^\ctransp)$. For this reason, in the theorem statement, we denote $\zeta_0 = \zeta_1$.
\end{proof}

\section[Proof of \cref{thm:sketched-pseudoinverse} for positive semidefinite quadratic form matrix]{Proof of \cref{thm:sketched-pseudoinverse} for positive semidefinite $\mA$}
\label{sec:proof:thm:sketched-pseudoinverse}

\begin{proof}
We begin by proving the equivalence \cref{eq:thm:sketched-pseudoinverse} and then show that the limit as $\lambda \to 0$ is well-behaved when we multiply by $\mA^{1/2}$ to obtain \cref{eq:thm:sketched-pseudoinverse-A-half}.

Let $\mA_\delta \defeq \mA + \delta \mI_p$, $\mU \defeq \mS \biginv{\mS^\ctransp \mA \mS + \lambda \mI_q} \mS^\ctransp$, and $\mV \defeq \inv{\mA + \mu \mI_p}$. By the Woodbury matrix identity, we have the following two identities:
\begin{gather}
    \mS \biginv{\mS^\ctransp \mA_\delta \mS + \lambda \mI_q} \mS^\ctransp
    = \mU - \delta \mU \inv{\mI_p + \delta \mU} \mU, \\
    \inv{\mA_\delta + \lambda \mI_p}
    = \mV - \delta \mV \inv{\mI_p + \delta \mV} \mV.
\end{gather}
If either $\lambda \neq 0$ or $\limsup \tfrac{q}{p} < \liminf r(\mA)$, then we can conclude that (see, e.g., \cite{bai_silverstein_1998}) that $\bignorm[\op]{\biginv{\mS^\ctransp \mA_\delta \mS + \lambda \mI_q}}$ is almost surely uniformly bounded and that $\mu$ is bounded away from zero (see \cref{rem:joint-signs-lambda-mu}). Thus, since $\norm[\op]{\mS}$ is also almost surely bounded asymptotically, $\norm[\op]{\mU}$ and $\norm[\op]{\mV}$ are asymptotically bounded by constants $C_\mU$ and $C_\mV$, respectively. Therefore, for $\delta < \tfrac{1}{2}\min\set{C_\mU, C_\mV}$, we have the following bound on the trace functional difference:
\begin{multline}
    \limsup \big| \tr \bigbracket{\mTheta \bigparen{\mS \biginv{\mS^\ctransp \mA_\delta \mS + \lambda \mI_q} \mS^\ctransp - \inv{\mA_\delta + \lambda \mI_p}}}
    - \tr \bigbracket{\mTheta \bigparen{\mU - \mV}}
    \big| \\
    \leq \tfrac{\delta}{2} \norm[\tr]{\mTheta} \bigparen{C_\mU^2 + C_\mV^2}.
\end{multline}
Thus, as $\delta \searrow 0$, the trace functionals converge uniformly over $p$ for $\mTheta$ with uniformly bounded trace norm. We can therefore apply the Moore--Osgood Theorem to interchange limits, such that almost surely
\begin{align}
    \lim_{p \nearrow \infty} \big| \tr \bigbracket{\mTheta \bigparen{\mU - \mV}}
    \big| &= 
    \lim_{\delta \searrow 0} \lim_{p \nearrow \infty} 
    \big|
    \tr \bigbracket{\mTheta \bigparen{\mS \biginv{\mS^\ctransp \mA_\delta \mS + \lambda \mI_q} \mS^\ctransp - \inv{\mA_\delta + \lambda \mI_p}}}
    \big| \\
    &= 0.
\end{align}

To prove the equivalence in \cref{eq:thm:sketched-pseudoinverse-A-half}, we can apply the equivalence in \cref{eq:thm:sketched-pseudoinverse} proved above unless $\lambda = 0$ and $\limsup \tfrac{q}{p} \geq \liminf r(\mA)$. We need only consider $\limsup \lambda_0 < 0$, so it suffices to consider $\liminf \tfrac{q}{p} > \limsup r(\mA)$ (see \cref{rem:mu0-lambda0-vs-alpha}).
The condition $\limsup \lambda_0 < 0$ implies that there exists $c_\lambda > 0$ such that $\lambdaminnz(\mS^\ctransp \mA \mS) > c_\lambda$. Therefore, $\bignorm[\op]{\mA^{1/2} \mS \biginv{\mS^\ctransp \mA \mS + \lambda \mI_q}}$ is almost surely uniformly bounded in $p$ for all $\lambda \in D_\lambda$, where $D_\lambda = \bigset{z \in \complexset \colon |z| < \tfrac{c_\lambda}{2}}$. We now need to bound $\bignorm[\op]{\mA^{1/2} \inv{\mA + \mu \mI_p}}$. 
From the definition of $\mu_0$ in \cref{eq:mu0-lambda0-fps},
we observe that
\begin{align}
    \frac{p}{q}
    \frac{r(\mA) \lambda_{\max}(\mA)^2}{ (\lambda_{\max}(\mA) + \mu_0)^2} 
    \leq 1 \leq 
    \frac{p}{q}
    \frac{r(\mA) \lambdaminnz(\mA)^2}{ (\lambdaminnz(\mA) + \mu_0)^2},
\end{align}
from which we can conclude for the case that $\tfrac{q}{p} > r(\mA)$ and $\lambdaminnz(\mA) > 0$,
we can bound
\begin{equation}
    \label{eq:mu-0-upper-lower-bounds}
    \paren{\tfrac{p r(\mA)}{q} - 1} \lambda_{\max}(\mA) < 
    \paren{\sqrt{\tfrac{p r(\mA)}{q}} - 1} \lambda_{\max}(\mA)
    \leq \mu_0 \leq 
    \paren{\sqrt{\tfrac{p r(\mA)}{q}} - 1} \lambdaminnz(\mA) < 0.
\end{equation}
Since $\liminf \tfrac{q}{p} > \limsup r(\mA)$ and $\liminf \lambdaminnz(\mA) > 0$, we therefore must have $\limsup \mu_0 < 0$.
Define the set $D_{\mu} = \bigset{z \in \complexset \colon |z| < \tfrac{-\limsup \mu_0}{2}}$. Since $-\liminf \lambdaminnz(\mA) \leq \mu_0$, for all $\mu \in D_{\mu}$, we must have the bound
\begin{equation}
    \bignorm[\op]{\mA^{1/2} \inv{\mA + \mu \mI_p}} \leq \frac{2\bignorm[\op]{\mA^{1/2}}}{-\limsup \mu_0}.
\end{equation}
We also know from \cref{eq:sketched-modified-lambda} that
\begin{equation}
    |\lambda| = |\mu| \big| 1 - \tfrac{1}{q} \tr \bigbracket{\mA \inv{\mA + \mu \mI_p}} \big|.
\end{equation}
One can confirm that the second factor on the right-hand side is uniformly lower bounded away from 0 for $\mu \in D_\mu$
using the first bound in \cref{eq:mu-0-upper-lower-bounds}. Let $D_p = \set{\lambda : \mu(\lambda) \in D_\mu}$ be the inverse image of $D_\mu$ under the map $\lambda \mapsto \mu$ for each $p$. By the above arguments, the set $D = D_\lambda \cap \limsup D_p$ is an open set over which the functions 
\begin{equation}
    f_p(\lambda) = \big|\tr \bigbracket{\mTheta \bigparen{\mA^{1/2} \mS \big( \mS^\ctransp \mA \mS + \lambda \mI_q \big)^{-1} \mS^\ctransp - \mA^{1/2} (\mA + \mu \mI_q )^{-1}} } \big|
\end{equation}
converge uniformly as $\lambda \to 0$ over $p$. By Montel's theorem, these functions form a normal family. Since $f_p(\lambda) \searrow 0$ pointwise for $\lambda \neq 0$, this implies that $f_p(0) \searrow 0$.
\end{proof}

\section{Proofs in \Cref{sec:properties}}
\label{sec:proofs-properties}

We collect the proofs of the various properties of the equivalences obtained in our paper.

\subsection{Proof of \cref{rem:mu0-lambda0-vs-alpha}}
\label{sec:mu0-lambda0-vs-alpha}

\begin{proof}
Recall from \cref{eq:lambda0-mu0-in-alpha} that
for $\alpha \in (0, \infty)$,
\begin{equation}
    \label{eq:lambda0-with-mu0}
    \lambda_0(\alpha)
    = \mu_0
    \paren{ 1 - \tfrac{1}{\alpha} \tfrac{1}{p} 
    \tr \bracket{ \bA (\bA + \mu_0(\alpha) \bI)^{-1} } }.
\end{equation}
From the statement of \cref{rem:mu0-lambda0-vs-alpha},
$\lim_{\alpha \nearrow \infty} \mu_0(\alpha) = - \lambdaminnz(\bA)$.
We will argue below that
\begin{equation}
    \label{eq:mu0-by-alpha-lim}
    \lim_{\alpha \nearrow \infty}
    \frac
    {\tfrac{1}{p} \tr \bracket{ \bA (\bA + \mu_0(\alpha) \bI)^{-1} }}
    {\alpha}
    = 0,
\end{equation}
which combined with \cref{eq:lambda0-with-mu0} provides the desired result.

Observe that the limit on the left-hand side of \cref{eq:mu0-by-alpha-lim} 
is in the indeterminate $\infty/\infty$ form because
$\lim_{\alpha \nearrow \infty} \mu(\alpha) = - \lambdaminnz(\bA)$
and thus
$\lim_{\alpha \nearrow \infty} 
\tfrac{1}{p} \tr \bracket{ \bA (\bA + \mu(\alpha) \bI)^{-1} } = \infty$.
To evaluate the limit, we will appeal to L'H{\^o}pital's rule.
The derivative of the denominator with respect to $\alpha$ is 1,
while the derivative of the numerator with respect to $\alpha$ is
\begin{equation}
    \label{eq:deriv-numer-wrt-alpha}
    \tfrac{1}{p}
    \tr \bracket{ \bA (\bA + \mu_0(\alpha) \bI)^{-2} }
    \frac{\partial \mu_0(\alpha)}{\partial \alpha}.
\end{equation}
Implicitly differentiating \cref{eq:mu0-fp-in-alpha}
with respect to $\alpha$, we have
\begin{equation}
    \label{eq:deriv-mu0-wrt-alpha-relation}
    1 
    = \tfrac{1}{p}
    \tr \bracket{ \bA^2 (\bA + \mu_0(\alpha) \bI)^{-3} }
    \frac{\partial \mu_0(\alpha)}{\partial \alpha}.
\end{equation}
Substituting for $\tfrac{\partial \mu_0(\alpha)}{\partial \alpha}$
from \cref{eq:deriv-mu0-wrt-alpha-relation}
into \cref{eq:deriv-numer-wrt-alpha},
we can write the derivative of the numerator as
\begin{align}
    \frac
    {
    \tfrac{1}{p}
    \tr \bracket{ \bA (\bA + \mu_0(\alpha) \bI)^{-2} }
    }
    {
    \tfrac{1}{p}
    \tr \bracket{ \bA^2 (\bA + \mu_0(\alpha) \bI)^{-3}}
    }.
\end{align}
As $\alpha \nearrow \infty$ and $\mu_0(\alpha) \searrow - \lambdaminnz(\bA)$,
the limit of the quantity in the display above becomes
\begin{align}
    \lim_{\alpha \nearrow \infty}
    \frac
    {\lambdaminnz(\bA)}
    {(\lambdaminnz(\bA) + \mu_0(\alpha))^2}
    \cdot
    \frac
    {(\lambdaminnz(\bA) + \mu(\alpha))^3}{(\lambdaminnz(\bA))^2}
    = 
    \lim_{\alpha \nearrow \infty}
    1 + \frac{\mu_0(\alpha)}{\lambdaminnz(\mA)}
    = 1 - 1
    = 0.
\end{align}
Thus, we can conclude that \cref{eq:mu0-by-alpha-lim} holds,
and the statement then follows.
The remaining claims follow by similar calculations.
\end{proof}

\subsection{Proof of \cref{rem:mu0-lambda0-signs}}
\label{sec:mu0-lambda0-signs}

\begin{proof}
We start by noting that 
\begin{align}
    \lim_{x \searrow 0}
    \tfrac{1}{p}
    \tr \bracket{ \bA^2 (\bA + x \bI)^{-2} }
    &=
    \lim_{x \searrow 0}
    \tfrac{1}{p}
    \sum_{i = 1}^{p}
    \frac{\lambda_i^2(\bA)}{(\lambda_i(\bA) + x)^2}
     \\
    &=
    \lim_{x \searrow 0}
    \tfrac{1}{p}
    \sum_{i = 1}^{p}
    \frac{\lambda_i(\bA)}{\lambda_i(\bA) + x}
    =
    \lim_{x \searrow 0}
    \tfrac{1}{p}
    \tr \bracket{ \bA (\bA + x \bI)^{-1} } \\
    &=
    \tfrac{1}{p} \sum_{i=1}^{p}
    \1\{ \lambda_i(\bA) > 0 \} 
    = r(\bA).
\end{align}
Now, write the first equation in \cref{eq:mu0-lambda0-fps} 
in terms of $\alpha$ as
\[
    \alpha = \tfrac{1}{p} \tr \bracket{\mA^2 \paren{\mA + \mu_0 \mI}^{-2}}.
\]
Thus, when $\alpha = r(\bA)$, we have $\mu_0 = 0$
as the solution to the first equation of \cref{eq:mu0-lambda0-fps}.
Because $\mu \mapsto \tfrac{1}{p} \tr[\bA^2 (\bA + \mu \bI)^{-2}]$
is monotonically decreasing in $\mu$,
if $\alpha < r(\bA)$, we have $\mu_0 > 0$,
while if $\alpha > r(\bA)$, we have $\mu_0 < 0$.

Next we argue about sign pattern of $\lambda_0$.
When $\alpha > r(\bA)$,
we have
\begin{align}
    \alpha
    = \tfrac{1}{p}
    \tr \bracket{ \bA^2 (\bA + \mu_0 \bI)^{-2} }
    = \tfrac{1}{p}
    \sum_{i = 1}^{p}
    \frac{\lambda_i^2(\bA)}{(\lambda_i(\bA) + \mu_0)^2}
    &\overset{(a)}{>}
    \tfrac{1}{p}
    \sum_{i = 1}^{p}
    \frac{\lambda_i(\bA)}{\lambda_i(\bA) + \mu_0} \\
    &=
    \tfrac{1}{p}
    \tr \bracket{ \bA (\bA + \mu_0 \bI)^{-1} },
\end{align}
where the inequality $(a)$ follows because $\mu_0 < 0$.
From \cref{eq:mu0-lambda0-fps},
it thus follows that $\lambda_0 < 0$.
Similarly, when $\alpha < r(\bA)$,
note that
\begin{align}
    \alpha
    = \tfrac{1}{p}
    \tr \bracket{ \bA^2 (\bA + \mu_0 \bI)^{-2} }
    = \tfrac{1}{p}
    \sum_{i = 1}^{p}
    \frac{\lambda_i^2(\bA)}{(\lambda_i(\bA) + \mu_0)^2}
    &\overset{(b)}{<}
    \tfrac{1}{p}
    \sum_{i = 1}^{p}
    \frac{\lambda_i(\bA)}{\lambda_i(\bA) + \mu_0} \\
    &=
    \tfrac{1}{p}
    \tr \bracket{ \bA (\bA + \mu_0 \bI)^{-1} },
\end{align}
where inequality $(b)$ follows from the fact that
\[
    0
    <
    \left(\frac{\lambdaminnz(\bA)}{\lambdaminnz(\bA) + \mu_0}\right)^2
    <
    \frac{\lambdaminnz(\bA)}{\lambdaminnz(\bA) + \mu_0}
    < 1,
\]
since $\mu_0 > 0$ in this case
and $\lambdaminnz(\bA) > 0$.
From \cref{eq:mu0-lambda0-fps},
it thus again follows that $\lambda_0 < 0$.
This completes the proof.
\end{proof}

\subsection{Proof of \cref{prop:monotonicities-lambda-alpha}}
\label{sec:monotonicies-lambda-alpha}

\begin{proof}
The claims follow from simple derivative calculations.
We split into two cases, one with respect to $\lambda$,
and the other with respect to $\alpha$.

\subsubsection[Monotonicity with respect to ridge regularization parameter]{Monotonicity with respect to $\lambda$}

For a fixed $\alpha$,
implicitly differentiating the fixed-point equation \cref{eq:sketched-modified-lambda}
with respect to $\lambda$, we obtain
\begin{equation}
    \label{eq:fp-differentiation-lambda}
    1 =
    \frac{\partial \mu}{\partial \lambda}
    - \left( \tfrac{1}{q} \tr \bracket{\bA \inv{\bA + \mu \bI} } 
     - \mu \tfrac{1}{q}\tr \bracket{ \bA \paren{\bA + \mu \bI}^{-2} }
    \right) 
    \frac{\partial \mu}{\partial \lambda}.
\end{equation}
Note the following algebraic simplification:
\begin{align}
    \bA \paren{ \bA + \mu \bI }^{-1}
    - \mu \bA \paren{ \bA + \mu \bI }^{-2}
    &= \bA \paren{ \bA + \mu \bI }^{-1}
    \paren{ \bI - \mu \paren{ \bA + \mu \bI }^{-1} } \nonumber \\
    &= \bA \paren{ \bA + \mu \bI }^{-1} \bA \paren{ \bA + \mu \bI }^{-1}
    = \bA^2 \paren{ \bA + \mu \bI }^{-2} \label{eq:matrix-diff-manip}.
\end{align}
Substituting \cref{eq:matrix-diff-manip} into \cref{eq:fp-differentiation-lambda},
we have
\begin{equation}
    \label{eq:mu-deriv-lambda}
    \frac{\partial \mu}{\partial \lambda}
    = \frac
    {1}
    {
    1 - \tfrac{1}{q} \tr \bracket{ \bA^2 \paren{ \bA + \mu \bI }^{-2} }}.
\end{equation}
Observe that $\mu \mapsto \tfrac{1}{q} \tr[\bA^2 (\bA + \mu \bI)^{-2}]$
is monotonically decreasing function of $\mu$
over $(\mu_0, \infty)$
and because $1 = \tfrac{1}{q} \tr[\bA^2 (\bA + \mu_0 \bI)^{-2}]$
from the first equation in \cref{eq:mu0-lambda0-fps},
the denominator of \cref{eq:mu-deriv-lambda} is positive over $(\mu_0, \infty)$.
Consequently, $\tfrac{\partial \mu}{\partial \lambda}$ is positive,
and $\mu$ is a monotonically increasing function of $\lambda$.
Finally, note that as $\lambda \searrow \lambda_0$, $\mu(\lambda) \searrow \mu_0$,
and as $\lambda \nearrow \infty$, $\mu(\lambda) \nearrow \infty$.
This completes the proof of the first part.

\subsubsection[Monotonicity with respect to sketch aspect ratio]{Monotonicity with respect to $\alpha$}

We begin by writing \cref{eq:sketched-modified-lambda} 
in $\alpha$ as
\begin{equation}
    \label{eq:sketched-modified-lambda-in-alpha}
    \lambda
     = \mu
     \paren{ 1 - \tfrac{1}{\alpha} \tfrac{1}{p} \tr \bracket{ \bA (\bA + \mu \bI)^{-1} } }.
\end{equation}
For a fixed $\lambda$,
implicitly differentiating \cref{eq:sketched-modified-lambda}
with respect to $\alpha$, we have
\begin{equation}
    \label{eq:fp-differentiation-alpha}
    0
    =
    \frac{\partial \mu}{\partial \alpha}
    +
    \frac{\mu}{\alpha^2}
    \tfrac{1}{p}
    \tr \bracket{ \bA (\bA + \mu \bI)^{-1} }
    -
    \frac{1}{\alpha}
    \left(
        \tfrac{1}{p}
        \tr \bracket{ \bA (\bA + \mu \bI)^{-1} }
        - \mu \tfrac{1}{p}
        \tr \bracket{ \bA (\bA + \mu \bI)^{-2} }
    \right)
    \frac{\partial \mu}{\partial \alpha}.
\end{equation}
Solving for $\tfrac{\partial \mu}{\partial \alpha}$,
we obtain
\begin{equation}
    \label{eq:mu-deriv-alpha-1}
    \frac{\partial \mu}{\partial \alpha}
    = 
    \frac
    {- \tfrac{1}{\alpha^2} \mu \tfrac{1}{p}
    \tr \bracket{ \bA (\bA + \mu \bI)^{-1}} }
    {1 - \tfrac{1}{\alpha} \tfrac{1}{p}
    \left( \tr \bracket{ \bA (\bA + \mu \bI)^{-1} 
    - \mu \bA (\bA + \mu \bI)^{-2} } \right)}.
\end{equation}
Similar to the part above,
substituting the relation \cref{eq:matrix-diff-manip} into \cref{eq:fp-differentiation-alpha}
and simplifying yields
\begin{equation}
    \label{eq:mu-deriv-alpha-2}
    \frac{\partial \mu}{\partial \alpha}
    = 
    \frac
    {- \tfrac{1}{\alpha} \mu \tfrac{1}{q}
    \tr \bracket{ \bA (\bA + \mu \bI)^{-1}} }
    {1 - \tfrac{1}{q} \tr \bracket{ \bA^2 \paren{ \bA + \mu \bI }^{-2} }}.
\end{equation}

Because the denominator of \cref{eq:mu-deriv-alpha-2} is positive
from \cref{eq:mu0-lambda0-fps}
as argued above
and $\tr[\bA (\bA + \mu \bI)^{-1}]$ is positive
for $\mu \in (\mu_0, \infty)$,
the sign of $\frac{\partial \mu}{\partial \alpha}$
is opposite the sign of $\mu$.
Because when $\lambda \ge 0$, $\mu \ge 0$
(from the first part of \cref{rem:joint-signs-lambda-mu}),
in this case, $\frac{\partial \mu}{\partial \alpha}$
is negative, and $\mu$ is monotonically decreasing in $\alpha$.
When $\lambda < 0$,
for $\alpha \le r(\bA)$,
we have $\mu(\lambda) \ge 0$ 
(from the second part of \cref{rem:joint-signs-lambda-mu}).
Thus, over $(0, r(\bA))$, $\mu$ is monotonically decreasing in $\alpha$.
On the other hand, for $\alpha > r(\bA)$,
$\mu(\lambda) < 0$ 
(since $\mathrm{sign}(\mu(\lambda)) 
= \mathrm{sign}(\lambda)$ and $\lambda < 0$),
and consequently, $\mu$ is monotonically increasing in $\alpha$
over $(r(\bA), \infty)$.

Finally, to obtain the limit of $\mu(\alpha)$
as $\alpha \searrow 0$,
we write \cref{eq:sketched-modified-lambda-in-alpha} as
\[
    \lambda \alpha
    = \mu \alpha
    - \mu \tfrac{1}{p}
    \tr \bracket{ \bA (\bA + \mu \bI)^{-1} }.
\]
Now, for any $\lambda \in (\lambda_0, \infty)$,
$\lim_{\alpha \searrow 0} \lambda \alpha = 0$.
Thus, we have
\[
    \lim_{\alpha \searrow 0}
    \mu(\alpha)
    = \lim_{\alpha \searrow 0}
    f^{-1}(\alpha),
\]
where $f(x) = \tfrac{1}{p} \tr[\bA (\bA + x \bI)^{-1}]$.
Observe that function $f$ is strictly decreasing
over $(\mu_0, \infty)$,
and $\lim_{x \nearrow \infty} f(x) = 0$.
Hence, the function $f^{-1}$ is strictly decreasing
and $\lim_{\alpha \searrow 0} f^{-1}(\alpha) = \infty$.
This provides us with the first limit.
To obtain the limit of $\mu(\alpha)$ as $\alpha \nearrow \infty$,
write from \cref{eq:sketched-modified-lambda}
\[
    \mu
    = \lambda 
    + \tfrac{1}{\alpha} \tfrac{1}{p}
    \tr \bracket{ \mu \bA (\bA + \mu \bI)^{-1} }.
\]
Observe that $\tfrac{1}{p} \tr[\mu \bA (\bA + \mu \bI)^{-1}]$
is bounded for $\mu \in (\mu_0, \infty)$.
Thus, taking the limit $\alpha \nearrow \infty$,
we conclude that $\lim_{\alpha \nearrow \infty} \mu(\alpha) = \lambda$.
This finishes the second part, and completes the proof.
\end{proof}

\subsection{Proof of \cref{rem:joint-signs-lambda-mu}}
\label{sec:joint-signs-lambda-mu}

\begin{proof}
We start by writing \cref{eq:sketched-modified-lambda}
in terms of $\alpha$ as
\[
    \lambda = \mu
    \paren{1 - \tfrac{1}{\alpha} \tfrac{1}{p} \tr \bracket{ \bA (\bA + \mu \bI)^{-1} } }.
\]
For the subsequent argument, it will help to rearrange the terms in the equation in display above to arrive at
the following equivalent equation:
\begin{equation}
    \label{eq:sketched-modified-lambda-rewriting}
    1 - \frac{\lambda}{\mu}
    = \tfrac{1}{\alpha} \tfrac{1}{p} \tr \bracket{ \bA (\bA + \mu \bI)^{-1} }.
\end{equation}
We consider two separate cases depending on $\lambda \ge 0$ and $\lambda < 0$.

\textbf{Case $\lambda \ge 0$}:
Fix $\alpha > 0$.
Observe that the left side of \cref{eq:sketched-modified-lambda-rewriting}
is an increasing function of $\mu$,
and the right side of \cref{eq:sketched-modified-lambda-rewriting}
is a decreasing function of $\mu$.
As $\mu$ varies from $0^{+}$ to $\infty$, 
the right hand side decreases from $\tfrac{r(A)}{\alpha}$ to $0$,
while the left hand side increases from $-\infty$ to $1$.
Since $1 > 0$,
there is a unique intersection for $\mu \ge 0$.

\textbf{Case $\lambda < 0$}:
Fix $\alpha \le r(\bA)$.
For this subcase,
from \Cref{rem:mu0-lambda0-signs},
$\mu_0 \ge 0$.
Thus, there is a unique intersection for $\mu \ge 0$.
Fix now $\alpha > r(\bA)$.
For this subcase, the term in the parenthesis of \cref{eq:sketched-modified-lambda}
is positive. Thus, $\mathrm{sign}(\mu) = \mathrm{sign}(\lambda)$.

This completes all the three cases, and finishes the proof.
\end{proof}

\subsection{Proof of \cref{rem:concavity-mu-in-lambda}}
\label{sec:concavity-mu-in-lambda}

\begin{proof}
Recall that $\mu_0 > - \lambdaminnz(\bA)$.
For $x \in (\mu_0, \infty)$,
observe that
\[
    \frac{\partial}{\partial x}
    \tr \bracket{ \bA^2 (\bA + x \bI)^{-1} }
    = - \tr \bracket{ \bA^2 (\bA + x \bI)^{-2} }
    < 0,
\]
\[
    \frac{\partial^2}{\partial x^2}
    \tr \bracket{ \bA^2 (\bA + x \bI)^{-1} }
    = 2 \tr \bracket{ \bA^2 (\bA + x \bI)^{-3} } 
    > 0.
\]
Thus, the function
\[
    x \mapsto
    \tfrac{1}{q}
    \tr \bracket{ \bA^2 (\bA + x \bI)^{-1} }
\]
is strictly decreasing and convex over $(\mu_0, \infty)$,
and consequently
the function
\[
    x \mapsto
    \tfrac{1}{q}
    \tr \bracket{ x \bA (\bA + x \bI)^{-1} }
    = 
    \tfrac{1}{q}
    \tr \bracket{ \bA (\bI - \bA (\bA + x \bI)^{-1}) }
    =
    \tfrac{1}{q}
    \tr[ \bA ]
    - \tfrac{1}{q}
    \tr \bracket{ \bA^2 (\bA + x \bI)^{-1} }
\]
is strictly increasing and concave over $(\mu_0, \infty)$.
Hence, the function $f$ 
(appearing in the right-hand side of \cref{eq:sketched-modified-lambda} in $\mu$)
defined by
\begin{equation}
    \label{eq:sketched-modified-lambda-rhs}
    f(x)
    =
    x 
    - x \tfrac{1}{q}
    \tr \bracket{ \bA (\bA + x \bI)^{-1} }
    =
    x
    \paren{ 1 - \tfrac{1}{q} \tr \bracket{ \bA (\bA + x \bI)^{-1} } }
\end{equation}
is strictly increasing and convex over $(\mu_0, \infty)$.

Now, observe from \cref{eq:sketched-modified-lambda} that
for a given $\lambda$,
$\mu(\lambda) = f^{-1}(\lambda)$,
where $f$ is as defined in \cref{eq:sketched-modified-lambda-rhs}.
Because inverse of a strictly increasing, continuous, and convex function
is strictly increasing, continuous, and concave 
(see, e.g., Proposition 3 of \cite{hiriart_urruty-martinez_legaz_2003}),
we conclude that $\lambda \mapsto \mu(\lambda)$
where $\mu(\lambda)$ solves \cref{eq:sketched-modified-lambda}
is concave in $\lambda$ over $(\lambda_0, \infty)$.
We remark that, more directly,
we can also compute the second derivative of $\mu(\lambda)$ 
with respect to $\lambda$.
From \cref{eq:mu-deriv-lambda}, we have
\begin{equation}
    \label{eq:mu-deriv-lambda-in-alpha}
    \frac{\partial \mu}{\partial \lambda}
    = \frac
    {1}
    {
    1 - \tfrac{1}{\alpha} \tfrac{1}{p} \tr \bracket{ \bA^2 \paren{ \bA + \mu \bI }^{-2} }}.
\end{equation}
Taking partial derivative of \cref{eq:mu-deriv-lambda-in-alpha}
with respect to $\lambda$, we get
\[
    \frac{\partial^2 \mu}{\partial \lambda^2}
    = 
    \frac
    {
        -2 \tfrac{1}{\alpha} \tfrac{1}{p} \tr \bracket{ \bA^2 (\bA + \mu \bI)^{-3} }
    }
    {
        \left( 
            1 - \tfrac{1}{\alpha} \tfrac{1}{p} \tr \bracket{ \bA^2 (\bA + \mu \bI)^{-2} } 
        \right)^2
    }
    \frac{\partial \mu}{\partial \lambda}
    =
    \frac
    {
        -2 \tfrac{1}{\alpha} \tfrac{1}{p} \tr \bracket{ \bA^2 (\bA + \mu \bI)^{-3} }
    }
    {
        \left( 
            1 - \tfrac{1}{\alpha} \tfrac{1}{p} \tr \bracket{ \bA^2 (\bA + \mu \bI)^{-2} } 
        \right)^3
    }
    < 0,
\]
from which the concavity claim follows.

Using the concavity of $\mu$ in $\lambda$,
we can write for 
$\lambda, \widetilde{\lambda} \in (\lambda_0, \infty)$,
\begin{equation}
    \label{eq:concavity-bound-mu-lambda-1}
    \mu(\lambda)
    \le \mu(\widetilde{\lambda})
    + \frac{\partial \mu}{\partial \lambda} 
   \mathrel{\Big |}_{\lambda = \widetilde{\lambda}} 
    (\lambda - \widetilde{\lambda}).
\end{equation}
Now, from \cref{eq:sketched-modified-lambda},
for any $\widetilde{\lambda} \in (\lambda_0, \infty)$, 
we have 
\begin{equation}
    \label{eq:concavity-bound-mu-lambda-2}
    \mu(\widetilde{\lambda}) - \widetilde{\lambda}
    = \tfrac{1}{q}
    \tr \bracket{ \mu(\widetilde{\lambda}) \bA (\bA + \mu(\widetilde{\lambda}) \bI)^{-1} }
    = \tfrac{1}{\alpha} \tfrac{1}{p}
    \tr \bracket{ \mu(\widetilde{\lambda}) \bA (\bA + \mu(\widetilde{\lambda}) \bI)^{-1} }.
\end{equation}
Substituting in \cref{eq:concavity-bound-mu-lambda-2}
in \cref{eq:concavity-bound-mu-lambda-1} yields
\begin{equation}
    \label{eq:concavity-bound-mu-lambda-3}
    \mu(\lambda)
    \le
    \frac{\partial \mu}{\partial \lambda} 
    \mathrel{\Big |}_{\lambda = \widetilde{\lambda}} 
    \lambda 
    +
    \tfrac{1}{\alpha}
    \tfrac{1}{p}
    \tr
    \bracket{ \mu(\widetilde{\lambda}) \bA (\bA + \mu(\widetilde{\lambda}) \bI)^{-1} }.
\end{equation}
From \Cref{prop:monotonicities-lambda-alpha},
$\lambda \mapsto \mu(\lambda)$ is monotonically increasing in $\lambda$
and $\lim_{\lambda \nearrow \infty} \mu(\lambda) = \infty$.
In addition, $\mu \mapsto \tr[\bA^2 (\bA + \mu \bI)^{-2}]$
is monotonically decreasing in $\mu$
and $\lim_{\mu \nearrow \infty} \tr[\bA^2 (\bA + \mu \bI)^{-2}] = 0$,
while 
$\mu \mapsto \tr[\mu \bA (\bA + \mu \bI)^{-1}]$
is monotonically increasing in $\mu$,
and 
$
\lim_{\mu \nearrow \infty} \tr[\mu \bA (\bA + \mu \bI)^{-1}] = \tr[ \bA ].
$
Thus, 
from \cref{eq:mu-deriv-lambda-in-alpha},
choosing $\widetilde{\lambda}$ large enough
so that $\mu(\widetilde{\lambda})$ is large enough,
for any $\epsilon > 0$, we can write
\begin{equation}
    \label{eq:deriv-asymp-mu-lambda}
    \frac{\partial \mu}{\partial \lambda} 
    \mathrel{\Big |}_{\lambda = \widetilde{\lambda}}
    = 
    \frac
    {1}
    {
    1 - \tfrac{1}{\alpha} 
    \tfrac{1}{p} \tr \bracket{ \bA^2 (\bA + \mu(\widetilde{\lambda}) \bI)^{-2} } }
    \le
    1 
    +
    \epsilon,
\end{equation}
\begin{equation}
    \label{eq:concavity-intercept-asymp}
    \tfrac{1}{\alpha}
    \tfrac{1}{p}
    \tr \bracket{ \mu(\widetilde{\lambda}) \bA (\bA + \mu(\widetilde{\lambda}) \bI)^{-1} }
    \le
    \tfrac{1}{\alpha} \tfrac{1}{p} \tr \bracket{ \bA } + \epsilon.
\end{equation}
Combining 
\cref{eq:concavity-bound-mu-lambda-2,eq:deriv-asymp-mu-lambda,eq:concavity-intercept-asymp},
one then has
\[
    \mu(\lambda)
    \le
    (1 + \epsilon)
    \lambda 
    + 
    \tfrac{1}{\alpha}
    \tfrac{1}{p}
    \tr[\bA]
    + \epsilon.
\]
Since the inequality holds for any arbitrary $\epsilon$, 
the desired upper bound on $\mu(\lambda)$ follows.
For the lower bound,
observe from \cref{eq:concavity-bound-mu-lambda-2} that
for any $\lambda \in (\lambda_0, \infty)$
\[
    \mu(\lambda)
    = \lambda + \tfrac{1}{q} \tr \bracket{ \mu(\lambda) \bA 
    (\bA + \mu(\lambda) \bI)^{-1} }.
\]
From \Cref{rem:joint-signs-lambda-mu},
$\mu(\lambda) \ge 0$ either when $\lambda \ge 0$,
or when $\alpha \le r(\bA)$.
In either of the cases,
the term $\tfrac{1}{q} \tr[\mu(\lambda) \bA (\bA + \mu(\lambda) \bI)^{-1}]$
is positive,
and thus $\mu(\lambda) \ge \lambda$.
Finally, the limit as $\lambda \nearrow \infty$
follows simply by noting that
$\mu(\lambda) \nearrow \infty$ and 
$\tr[\mu \bA (\bA + \mu \bI)^{-1}] \nearrow \tr[\bA]$
as $\lambda \nearrow \infty$.
This finishes the proof.
\end{proof}

\subsection{Proof of \cref{rem:alt-mu-prime}}

\begin{proof}
We begin by rewriting \cref{eq:mu-prime} using \cref{eq:thm:sketched-pseudoinverse}:
\begin{align}
    \mu' = \frac{\frac{\mu^3}{q} \tr \bracket{\mPsi \paren{\mA + \mu \mI}^{-2} }}{\mu \paren{ 1 - \tfrac{1}{q} \tr \bracket{\mA \inv{\mA + \mu \mI_p}}} + \frac{\mu^2}{q} \tr \bracket{\mA \paren{\mA + \mu \mI}^{-2}} }.
\end{align}
After dividing both the numerator and denominator by $\mu$, we note that the denominator has a form which has already been simplified in \Cref{sec:monotonicies-lambda-alpha}, and immediately obtain the factorization in terms of $\tfrac{\partial \mu}{\partial \lambda}$.
\end{proof}

\section{Proofs in \Cref{sec:applications}}
\label{sec:proofs:applications}
\begin{editedtwo}
    This section collects proofs for various results in \Cref{sec:applications}.
\end{editedtwo}

\begin{edited}
\subsection{Proof of \Cref{eq:sketch-and-project}}
\label{sec:proof:eq:sketch-and-project}
\begin{proof}
    First, we can write $\vb = \mL \vx_*$. Next, we can subtract $\vx_*$:
    \begin{align}
        \vx_t - \vx_* = (\mI_n - \mL^\ctransp \mS_t (\mS_t^\ctransp \mL \mL^\ctransp \mS_t)^\dagger \mS_t^\ctransp \mL) (\vx_{t - 1} - \vx_*).
    \end{align}
    Because $(\mI_n - \mL^\ctransp \mS_t (\mS_t^\ctransp \mL \mL^\ctransp \mS_t)^\dagger \mS_t^\ctransp \mL)$ is a projection matrix and therefore idempotent, 
    \begin{align}
        \norm[2]{\vx_t - \vx_*}^2 &= (\vx_{t - 1} - \vx_*)^\ctransp (\mI_n - \mL^\ctransp \mS_t (\mS_t^\ctransp \mL \mL^\ctransp \mS_t)^\dagger \mS_t^\ctransp \mL)   (\vx_{t - 1} - \vx_*) \\
        &\asympequi 
        (\vx_{t - 1} - \vx_*)^\ctransp (\mI_n - \mL^\ctransp \biginv{\mL \mL^\ctransp  + \mu \mI_p} \mL) (\vx_{t - 1} - \vx_*) \\
        &\leq \rho \norm[2]{\vx_{t-1} - \vx_*}^2,
    \end{align}
    where the asymptotic equivalence is the result of applying \cref{thm:sketched-pseudoinverse} with $\mA = \mL \mL^\ctransp$, and $\rho = \lambda_{\max}(\mI_n - \mL^\ctransp \biginv{\mL \mL^\ctransp  + \mu \mI_p} \mL)$. Thus the stated convergence bound holds almost surely for any $t$.
\end{proof}
\subsection{Proof of \cref{rem:sketch-and-project}}
\begin{proof}
Since $\lambda = 0$, we know that $\alpha \mapsto \mu$ is an invertible mapping from $(0, r(\mL))$ onto $\mu \in (0, \infty)$ by \cref{prop:monotonicities-lambda-alpha} and \cref{rem:joint-signs-lambda-mu}, while for $\alpha \geq r(\mL )$, $\mu = 0$ and therefore a solution is reached in $t_\varepsilon = 1$ steps. Thus, it remains only to consider $\mu \in (0, \infty)$. Generalizing to galactic inversion algorithms of complexity $O(m^{1+\delta}p)$, we can write the relative computation factor in terms of $\mu$ as
\begin{align}
    \alpha^{1 + \delta} t_\varepsilon = 
    \paren{\tfrac{1}{n} \tr \bracket{\mL \mL^\ctransp \biginv{\mL \mL^\ctransp + \mu \mI_n}}}^{1 + \delta} 
    \Bigg\lceil \frac{\log\paren{\frac{a_+ \norm[2]{\vx_0 - \vx_*}^2}{\varepsilon}}}
    {\log (1 + \tfrac{a_-}{\mu})} \Bigg\rceil,
\end{align}
where $a_+ \defeq \lambda_{\max}(\mL \mL^\ctransp)$ and $a_- \defeq \lambdaminnz(\mL \mL^\ctransp)$. For any fixed $\mu$ (and equivalently any fixed $\alpha < r(\mL)$), as $\varepsilon \searrow 0$, we clearly have $t_\varepsilon \nearrow \infty$. For fixed $\varepsilon$, the limiting behavior as $\mu \nearrow \infty$ (equivalently $\alpha \searrow 0$) is determined by the ratio
\begin{align}
    \frac{\paren{\tfrac{1}{n} \tr \bracket{\mL \mL^\ctransp \biginv{\mL \mL^\ctransp + \mu \mI_n}}}^{1 + \delta}}
    {\log (1 + \tfrac{a_-}{\mu})}
    &= 
    \frac{\paren{\tfrac{1}{n} \tr \bracket{\mL \mL^\ctransp \biginv{\mL \mL^\ctransp + \mu \mI_n}}}^{1 + \delta}}
    {\tfrac{a_-}{\mu} + o(\tfrac{1}{\mu})}
    \searrow 0.
\end{align}
\end{proof}
\end{edited}

\section{Proofs for free sketching}
\label{sec:proofs-free}

\begin{editedtwo}

We first establish some notation and useful lemmas. We next provide the proof details for \Cref{thm:general-free-sketching} and then provide minor derivation details for orthogonal sketching in \Cref{cor:orthonormal-sketch}.

With some abuse of notation, we will let $\mA$ denote both the finite $p \times p$ matrix as well as the limiting element in the free probability space (which can be understood for example as being a bounded linear operator on a Hilbert space).  We note that all notions that we need, in particular logarithms of determinants, are well defined in this limit as well, as long as they are appropriately normalized. For this reason, we define normalized versions $\barlogdet(\mA) \defeq \tfrac{1}{p} \logdet(\mA)$ and $\bartr[\mA] \defeq \tfrac{1}{p} \tr[\mA]$ which extend nicely to the limit.

We will also use the following straightforward result from differential calculus allowing us to draw conclusions about first derivatives from second derivatives.
\begin{lemma}[Controlling derivatives]
\label{lem:control-derivatives}
Let $g \colon \setT \times \setZ \subseteq \complexset^2 \to \complexset$ be holomorphic. Then for each $t \in \setT$, 
if $\inf_{z \in \setZ} \Big| \frac{\partial g(t, z)}{\partial t} \Big| = 0$
and 
$\frac{\partial^2 g(t, z)}{\partial t \partial z} = 0$ for all $z \in \setZ$, then
$\frac{\partial g(t, z)}{\partial t} = 0$
for all $z \in \setZ$.
\end{lemma}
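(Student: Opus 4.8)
The plan is to fix $t \in \setT$ and reduce everything to an elementary one–variable statement about the function $h(z) \defeq \frac{\partial g(t,z)}{\partial t}$ on $\setZ$. Since $g$ is holomorphic on the open set $\setT \times \setZ \subseteq \complexset^2$, partial differentiation in $t$ preserves holomorphy, so $h$ is holomorphic on $\setZ$ (which in all our applications is a domain, i.e.\ connected open; I would state this mild standing assumption, since on disconnected $\setZ$ the conclusion could fail on components where the infimum is not attained). The hypothesis $\frac{\partial^2 g(t,z)}{\partial t\,\partial z} = 0$ for all $z \in \setZ$ is exactly the statement that $h'(z) = \frac{\partial}{\partial z}\bigl(\frac{\partial g(t,z)}{\partial t}\bigr) = 0$ identically on $\setZ$.

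From here the argument is immediate: a holomorphic function on a connected open set with identically vanishing derivative is constant, so $h \equiv c$ for some $c \in \complexset$. The remaining hypothesis gives $\inf_{z \in \setZ} |h(z)| = |c| = 0$, hence $c = 0$, i.e.\ $h \equiv 0$ on $\setZ$. Unwinding the definition of $h$, this is precisely $\frac{\partial g(t,z)}{\partial t} = 0$ for all $z \in \setZ$, as claimed. The same reasoning applies verbatim for each $t \in \setT$, which yields the lemma.

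There is essentially no genuine obstacle here; the lemma is a packaging of the ``vanishing derivative $\Rightarrow$ constant, and constant with a zero $\Rightarrow$ identically zero'' principle. The only two points worth stating carefully are (i) that $\frac{\partial g}{\partial t}$ is holomorphic in $z$ (so that ``derivative zero implies constant'' is legitimate), which follows from joint holomorphy of $g$ in the two complex variables together with closedness of holomorphy under partial differentiation; and (ii) connectedness of $\setZ$, which holds in every instance where the lemma is used in the proof of \Cref{thm:general-free-sketching}.
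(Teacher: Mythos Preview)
Your proof is correct and takes essentially the same approach as the paper: both show that $h(z)=\partial_t g(t,z)$ is constant in $z$ (the paper does this via the fundamental theorem of calculus, you via the equivalent ``holomorphic with vanishing derivative is constant''), and then use the infimum hypothesis to conclude the constant is zero. Your explicit mention of the connectedness requirement on $\setZ$ is a welcome clarification that the paper leaves implicit in its path-integral argument.
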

\begin{proof}
By the fundamental theorem of calculus, for some $z_0 \in \setZ$,
\begin{align}
    \frac{\partial g(t, z)}{\partial t} 
    &= \frac{\partial}{\partial t}
    \paren{\int_{z_0}^z \frac{\partial g(t, u)}{\partial u} du + g(t, z_0)} \\
    &= \int_{z_0}^z \frac{\partial^2 g(t, u)}{\partial t \partial u} du + \frac{\partial g(t, z_0)}{\partial t}\\
    &= 0,
\end{align}
where the final equality follows by our hypotheses since $z_0$ is arbitrary.
\end{proof}

We lastly introduce a series of invertible transformations from free probability \cite{mingo2017free}:
\begin{align}
    G_\mA(z) = \bartr \bigbracket{\biginv{z \mI - \mA}}
    \;
    \longleftrightarrow
    \;
    M_\mA(z) = \frac{1}{z} G_\mA\paren{\frac{1}{z}} - 1
    \;
    \longleftrightarrow
    \;
    \Sxf_\mA(z) = \frac{1 + z}{z} M_\mA^{\langle -1\rangle}(z),
    \nonumber
\end{align}
which are the Cauchy transform (negative of the Stieltjes transform),
moment generating series $M_\mA(z) = \sum_{k=1}^\infty \bartr[\mA^k] z^k$,
and $S$-transform of $\mA$, respectively.
Here $M_\mA^{\langle -1\rangle}$ denotes inverse under composition of $M_\mA$.
We also recall the property of free products that $\Sxf_{\mA \mB}(z) = \Sxf_\mA(z) \Sxf_\mB(z)$, or equivalently $M_{\mA \mB}^{\langle -1\rangle}(z) = \tfrac{1 + z}{z} M_\mA^{\langle -1\rangle}(z) M_\mB^{\langle -1\rangle}(z) = \Sxf_\mA(z) M_\mB^{\langle -1\rangle}(z)$.

\end{editedtwo}

\subsection{Proof of \Cref{thm:general-free-sketching}}

\begin{proof}
\begin{editedthree}
We begin with the simpler case where $\mTheta$ in the equivalence definition is such that $p \mTheta$ has uniformly bounded operator norm. For this proof, we will simply write $\mTheta$ instead of $p\mTheta$ to be compatible with the normalized trace.
First, we can decompose $\mTheta$ into real and imaginary parts $\mTheta = \mTheta_{\Re} + i \mTheta_{\Im}$, so without loss of generality we can assume $\mTheta$ is real. Similarly,%
\end{editedthree}
\begin{editedtwo}
we note that $\bartr[\mTheta \mB] = \bartr[\tfrac{1}{2} (\mTheta + \mTheta^\ctransp) \mB]$ for any self-adjoint matrix $\mB \in \complexset^{p \times p}$, so we can assume $\mTheta$ is symmetric and therefore diagonalizable without loss of generality.
We let $\tmS = (\mS \mS^\ctransp)^{1/2}$ and note that we can now work entirely in dimension $p$ instead of both dimensions $p$ and $q$:
\begin{align}
    \mS \biginv{\mS^\ctransp \mA \mS - z \mI_q} \mS^\ctransp = \tmS \biginv{\tmS \mA \tmS - z \mI_p} \tmS.
\end{align}
Consider now the limit where $(\mTheta, \mA, \tmS)$ have converged spectrally with $\tmS$ free from $\mTheta$ and $\mA$.
We need only show that for some $\zeta \in \complexset^+$,
\begin{align}
    \bartr[\mTheta \tmS \biginv{\tmS \mA \tmS - z \mI} \tmS]
    = \bartr[\mTheta \biginv{\mA - \zeta \mI}].
\end{align}
We now define parameterized operators 
$\mB_{t, \zeta} = \mA + t \mTheta - \zeta \mI$
and
$\mB^{\tmS}_{t,z} = \tmS(\mA + t \mTheta) \tmS - z \mI$. By Jacobi's formula, we have the following two equalities
\begin{align}
    \bartr[\mTheta \biginv{\mA - \zeta \mI}] 
    &=
    \frac{\partial \barlogdet(\mB_{t,\zeta})}{\partial t} \Big|_{t = 0}, \\
    \bartr[\mTheta \tmS \biginv{\tmS \mA \tmS - z \mI} \tmS] &=
    \frac{\partial \barlogdet(\mB^{\tmS}_{t,z})}{\partial t} \Big|_{t = 0}.
\end{align}
Suppose that $z \mapsto \zeta$ is a holomorphic map. Then another way of stating our condition to be proven is that for $t = 0$ and all $z \in \complexset^+$, we must have $\frac{\partial g(t, z)}{\partial t} = 0$, where
\begin{align}
    g(t, z) = 
    \barlogdet(\mB_{t, \zeta})
    - \barlogdet(\mB^{\tmS}_{t,z})
    .
\end{align}
By \Cref{lem:control-derivatives}, it is sufficient to show that $\Im(\zeta) \nearrow \infty$ as $z \to i \infty$ (implying the condition $\inf_{z \in \setZ} \Big| \frac{\partial g(t, z)}{\partial t} \Big| = 0$) and that $\frac{\partial^2 g(t, z)}{\partial t \partial z} = 0$ for all $z \in \complexset^+$.

We therefore seek a choice of $z \mapsto \zeta$ that satisfies these conditions.
In particular, we need only to show that the last condition holds, and the rest will follow. 
The main idea is that we can control the derivative of $g$ in $t$, which has a dependence on $\mTheta$, in terms of the derivative of $g$ in $z$, which does not. 
For succinctness in the subsequent arguments, we will use the following notation for derivatives: for a function $f_t \colon \complexset \to \complexset$, we denote $\dot{f}_t(z) = \tfrac{\partial f_t(z)}{\partial t}$ and $f_t'(z) = \tfrac{\partial f_t(z)}{\partial z}$. 
That is, $\dot{f_t}$ is the derivative with respect to its index $t$, and $f_t'$ is the derivative with respect to its argument (typically $z$). Although we omit the argument $z$ of $\zeta$, we let $\zeta' = \frac{\partial \zeta}{\partial z}$.

Define $\mA_t \defeq \mA + t \mTheta$.
Appealing again to Jacobi's formula, we have two further equalities:
\begin{align}
    \frac{\partial \barlogdet(\mB_{t,\zeta})}{\partial z}
    &=
    -\bartr \bigbracket{ \mB_{t,\zeta}^{-1}}\zeta'
    = G_{\mA_t}(\zeta) \zeta', \\
    \frac{\partial \barlogdet(\mB^{\tmS}_{t,z})}{\partial z} 
    &= -\bartr \bigbracket{\mB^{\tmS\;-1}_{t,\zeta}}
    = G_{\mA_t \tmS^2}(z)
    .
\end{align}
The last equality follows because $\tmS \mA_t \tmS$ has the same spectrum as $\mA_t \tmS^2$ (to see this, note that they have the same moments due to the cyclic invariance of the tracial state $\bartr$). We therefore need $\zeta$ such that at $t = 0$, for all $z \in \complexset^+$,
\begin{align}
    \dot{G}_{\mA_t \tmS^2}(z) = \dot{G}_{\mA_t}(\zeta) \zeta'.
\end{align}
Equivalently, in terms of the moment generating series, we need
\begin{align}
    \frac{\dot{M}_{\mA_t \tmS^2}(\frac{1}{z})}{z} = \frac{\dot{M}_{\mA_t}(\frac{1}{\zeta}) \zeta'}{\zeta}.
    \label{eq:m-dot-z-zeta}
\end{align}
This is finally the condition that we will show. 

Now, from the property of free products, we know that for $m \in \complexset^-$,
\begin{align}
    M_{\mA_t \tmS^2}^{\langle -1 \rangle}(m) = \Sxf_{\tmS^2}(m) M_{\mA_t }^{\langle -1 \rangle}(m).
\end{align}
Choose now $m = M_{\mA_t \tmS^2}(\frac{1}{z})$, which gives us
\begin{align}
    z \Sxf_{\tmS^2}(m) = \frac{1}{M_{\mA_t }^{\langle -1 \rangle}(m)}
    \iff
    m = M_{\mA_t} \paren{\frac{1}{z \Sxf_{\tmS^2}(m)}}.
    \label{eq:m-M-S}
\end{align}
Matching the forms of \cref{eq:m-dot-z-zeta,eq:m-M-S}, we can form a guess of $\zeta = z \Sxf_{\tmS^2}(m)$, which we can also prove is the correct choice. To do so, we note that $m$ is parameterized by both $t$ and $z$. We first implicitly differentiate with respect to $t$:
\begin{align}
    \dot{m} = \dot{M}_{\mA_t} \paren{\frac{1}{z \Sxf_{\tmS^2}(m)}}
    - M_{\mA_t}' \paren{\frac{1}{z \Sxf_{\tmS^2}(m)}} \frac{\Sxf_{\tmS^2}'(m) \dot{m}}{z \Sxf_{\tmS^2}(m)^2},
\end{align}
which after plugging in $\zeta = z \Sxf_{\tmS^2}(m)$ gives us 
\begin{align}
    \dot{m} = \frac{\dot{M}_{\mA_t} \paren{\frac{1}{\zeta}}}{1 + M_{\mA_t}' \paren{\frac{1}{\zeta}} \frac{z \Sxf_{\tmS^2}'(m)}{\zeta^2}}.
\end{align}
Next, noting that $\zeta' = \Sxf_{\tmS^2}(m) + z \Sxf_{\tmS^2}'(m) m'$, we differentiate \cref{eq:m-M-S} with respect to $z$:
\begin{align}
    m' = -M_{\mA_t}' \paren{\frac{1}{\zeta}} \frac{\zeta'}{\zeta^2}
    \implies 
    \zeta' = \frac{\Sxf_{\tmS^2}(m)}{1 + M_{\mA_t}' \paren{\frac{1}{\zeta}} \frac{z \Sxf_{\tmS^2}'(m)}{\zeta^2}}.
\end{align}
We can deduce from the previous two equations and the fact that $\Sxf_{\tmS^2}(m) = \frac{\zeta}{z}$ that
\begin{align}
    \dot{m} = \dot{M}_{\mA_t} \paren{\frac{1}{\zeta}} \frac{ z \zeta'}{\zeta},
\end{align}
which is equivalent to \cref{eq:m-dot-z-zeta}, which we needed to show. Therefore, specializing to $t = 0$, we have that $\zeta = z \Sxf_{\tmS^2}(M_{\mA \tmS^2}(\tfrac{1}{z}))$ makes the the second derivative condition of \Cref{lem:control-derivatives} satisfied. 
Additionally, we have that $\Im(\zeta) \nearrow \infty$ as $z \to i \infty$: note that $M_{\mA \tmS^2}(\tfrac{1}{z}) = \bartr(\mA \tmS^2) \tfrac{1}{z} + o(\tfrac{1}{z})$ 
and similarly $\Sxf_{\tmS^2}(m) = \frac{1}{\bartr(\tmS^2)} + o(m)$, such that $\zeta = z (\frac{1}{\bartr(\tmS^2)} + o(\tfrac{1}{z}))$.

To obtain the equation for $\zeta$ in terms of $\Sxf_{\tmS^2}$ and $M_\mA$, combine $\zeta = z \Sxf_{\tmS^2}(m)$ and \cref{eq:m-M-S}. To obtain the equation for $\zeta$ in terms of $\mS^\ctransp \mA \mS$ and $z$, use the fact that $m = M_{\tmS \mA \tmS}(\frac{1}{z})$.
\end{editedtwo}

\begin{editedthree}
\paragraph{Trace norm bounded $\mTheta$}    
For more general trace norm bounded $\mTheta$, such as rank one vector outer products, $p \mTheta$ does not have bounded operator norm and so the previous argument cannot be applied. However, with a stronger notion of freeness, called first-order or infinitesimal freeness~\cite{shlyakhtenko2018infinitesimal}, this extension is also possible. Following~\cite{shlyakhtenko2018infinitesimal,cebron2022freeness}, the key condition is to require sufficiently fast convergence of $G_{\mA \tmS^2}(z)$ in $p$. 
Concretely, let $\widetilde{G}_{\mA \tmS^2}$ be the Cauchy transform of the multiplicative free convolution of the spectra of $\mA$ and $\tmS^2$---that is, what the Cauchy transform of $\mA \tmS^2$ would be if $\mA$ and $\tmS^2$ were free, which is not possible in finite dimensions. 
Then we need almost sure convergence in the sense that
\begin{align}
    G_{\mA \tmS^2}(z) = \widetilde{G}_{\mA \tmS^2}(z) + \epsilon(p)
    \quad \text{where} \quad
    \epsilon(p) = o(\tfrac{1}{p}) .
\end{align}
Fortunately, this rate is known to hold in the almost sure sense when $\tmS$ is a unitarily invariant ensemble~\cite[Theorem 3.5]{cebron2022freeness}, so this assumption is satisfiable. 

We apply the same approach as in the previous case when $p \mTheta$ had bounded operator norm. 
Even though $p \mTheta$ now does not converge to a limiting bounded operator, the first-order terms like $\dot{G}_{\mA}(z)$ remain well-defined due to the bounded trace norm.
We note that a trace norm bounded perturbation does not change the limiting spectral distribution, which means that $\mA_t$ and $\mA$ asymptotically have the same spectrum and thus the same result of multiplicative convolution with $\tmS^2$. However, given some $t(p)$, we have the Taylor expansion
\begin{align}
    G_{\mA_{t(p)} \tmS^2}(z) = \widetilde{G}_{\mA \tmS^2}(z) + t(p) \dot{\widetilde{G}}_{\mA \tmS^2}(z) + O(t(p)^2 + \epsilon(p)).
\end{align}
Meanwhile, also taking the Taylor expansion of $G_{(\mA_t)_p}(\zeta)$,
\begin{align}
    G_{\mA_{t(p)}}(\zeta) \zeta' = G_{\mA}(\zeta) \zeta' + t(p) \dot{G}_{\mA}(\zeta) \zeta' + O(t(p)^2).
\end{align}
Therefore, choosing $t(p) = \sqrt{\tfrac{1}{p} \epsilon(p)}$ and taking the derivative of these two expansions, we can finally say that
\begin{align}
    \tr \bigbracket{\mTheta \bigparen{ &\biginv{\mA - \zeta \mI_p} \zeta' - \tmS \biginv{\tmS \mA \tmS - z \mI_p} \tmS }} \\
    &= \dot{G}_{\mA}(\zeta) \zeta' -  \dot{\widetilde{G}}_{\mA \tmS^2}(z) + O\bigparen{t(p) + \tfrac{\epsilon(p)}{t(p)}} \\
    &= O \bigparen{t(p) + \tfrac{\epsilon(p)}{t(p)}} \\
    &\asto 0,
\end{align}
where the final equality follows by choosing $\zeta$ as in the bounded operator norm case.
Then by similar application of \Cref{lem:control-derivatives} as before, we obtain the desired equivalence.

\end{editedthree}

\end{proof}

\subsection{Proof details for orthogonal sketching}
\begin{editedtwo}
To obtain the $S$-transform for the normalized orthogonal sketch, we first note that $\mQ \mQ^\ctransp$ has $q$ eigenvalues of $\frac{1}{\alpha}$ and $p - q$ eigenvalues of $0$. Therefore, it has
\begin{align}
    M_{\mQ \mQ^\ctransp}(z) = \bartr \bigbracket{\mQ \mQ^\ctransp \biginv{\tfrac{1}{z} \mI - \mQ \mQ^\ctransp}}
    = \frac{\alpha z}{\alpha - z},
\end{align}
which has inverse $M_{\mQ \mQ^\ctransp}^{\langle -1 \rangle}(w) = \frac{\alpha w}{\alpha + w}$ and therefore $S_{\mQ \mQ^\ctransp}(w) = \frac{\alpha (1 + w)}{\alpha + w}$.

To obtain the fixed point equation, we first solve $\gamma = \lambda S_{\mQ \mQ^\ctransp}(w)$ for $w$:
\begin{align}
    w = \frac{\alpha (\lambda - \gamma)}{\gamma - \alpha \lambda}.
\end{align}
Then, we plug in $w = -\bartr[\mA \biginv{\mA + \gamma \mI_p}] = \gamma \bartr[\biginv{\mA + \gamma \mI_p}] - 1$:
\begin{align}
    \gamma \bartr[\biginv{\mA + \gamma \mI_p}] =
    \frac{\alpha (\lambda - \gamma)}{\gamma - \alpha \lambda} + 1
    =\frac{\gamma (1 - \alpha)}{\gamma - \alpha \lambda}.
\end{align}
The stated relation follows directly.
\end{editedtwo}

\end{document}